\documentclass[a4paper,11pt]{article}
\textwidth380pt
\hoffset-40pt
\voffset+0pt
\headsep-20pt
\textheight510pt

\usepackage{amsmath, amsfonts, amscd, amssymb, amsthm, enumerate, xypic}

\DeclareMathOperator{\id}{\operatorname{id}}

\DeclareMathOperator{\Mat}{\operatorname{M}}

\DeclareMathOperator{\End}{\operatorname{End}}
\DeclareMathOperator{\Diag}{\operatorname{Diag}}

\DeclareMathOperator{\Ker}{\operatorname{Ker}}

\DeclareMathOperator{\Vect}{\operatorname{span}}
\DeclareMathOperator{\im}{\operatorname{Im}}

\DeclareMathOperator{\tr}{\operatorname{tr}}
\DeclareMathOperator{\Tr}{\operatorname{Tr}}
\DeclareMathOperator{\str}{\operatorname{str}}
\DeclareMathOperator{\car}{\operatorname{char}}

\DeclareMathOperator{\rk}{\operatorname{rk}}

\renewcommand{\setminus}{\smallsetminus}
\renewcommand{\epsilon}{\varepsilon}


\def\F{\mathbb{F}}


\def\calF{\mathcal{F}}

\def\calH{\mathcal{H}}

\def\calQ{\mathcal{Q}}

\def\calW{\mathcal{W}}


\def\lcro{\mathopen{[\![}}
\def\rcro{\mathclose{]\!]}}

\theoremstyle{definition}

\theoremstyle{plain}
\newtheorem{theo}{Theorem}[section]
\newtheorem{prop}[theo]{Proposition}
\newtheorem{cor}[theo]{Corollary}
\newtheorem{lemma}[theo]{Lemma}
\newtheorem{claim}{Claim}

\theoremstyle{plain}

\theoremstyle{remark}
\newtheorem{Rems}{Remarks}[section]
\newtheorem{Rem}[Rems]{Remark}

\title{Sums of two nilpotent quaternionic matrices}
\author{Cl\'ement de Seguins Pazzis\footnote{Universit\'e de Versailles Saint-Quentin-en-Yvelines, Laboratoire de Math\'ematiques
de Versailles, 45 avenue des \'Etats-Unis, 78035 Versailles cedex, France}
\footnote{e-mail address: clement.de-seguins-pazzis@ac-versailles.fr}}

\begin{document}

\thispagestyle{plain}

\maketitle
\begin{abstract}
Let $\calQ$ be a quaternion division algebra over a field $\F$, and $n \geq 2$ be an integer.
In a recent article, de la Cruz et al have proved that every $n$-by-$n$ matrix with entries in $\calQ$ and pure quaternionic trace
is the sum of three nilpotent matrices, and they have shown that some are not the sum of two nilpotent matrices.

Here, we give a simple characterization of the square matrices with entries in $\calQ$ that are the sum of two nilpotent ones.
When $n \geq 3$, the special cases involve the scalar matrices and their perturbations by rank $1$ matrices, as well as the very special
case of $3$-by-$3$ unispectral diagonalisable matrices.
\end{abstract}

\vskip 2mm
\noindent
\emph{AMS MSC:} 15B33, 15A23, 15A18

\vskip 2mm
\noindent
\emph{Keywords:} matrices, generalized quaternions, nilpotent matrices, decomposition

\section{Introduction}

\subsection{The problem}

The starting point of this article is the following classical theorem:

\begin{theo}\label{theo:basic}
Let $\F$ be a field, and $A \in \Mat_n(\F)$ be a nonscalar matrix (i.e. $A \neq \lambda I_n$ for all $\lambda \in \F$).
Then the following conditions are equivalent:
\begin{enumerate}[(i)]
\item $A$ has trace zero;
\item $A$ is similar to a matrix with diagonal zero;
\item $A$ is the sum of two nilpotent matrices.
\end{enumerate}
\end{theo}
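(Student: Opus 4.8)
The plan is to establish the three implications, the only substantial one being (i)$\Rightarrow$(ii). Begin with the easy directions. A nilpotent matrix has trace $0$ and the trace is additive, so (iii)$\Rightarrow$(i); and the trace being a similarity invariant while a matrix with zero diagonal has trace $0$, we get (ii)$\Rightarrow$(i). For (ii)$\Rightarrow$(iii): if $M$ has zero diagonal then $M=L+U$, where $L$ collects the entries of $M$ strictly below the diagonal and $U$ those strictly above; both are strictly triangular, hence nilpotent. As being a sum of two nilpotent matrices is visibly conjugation invariant, the same holds for every $A$ merely similar to such an $M$, which is exactly what (ii) grants. Thus everything reduces to (i)$\Rightarrow$(ii): a nonscalar $A\in\Mat_n(\F)$ with $\tr A=0$ is similar to a matrix with zero diagonal.

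I would prove this by induction on $n$. For $n=2$: since $A$ is nonscalar there is a vector $v$ with $v,Av$ linearly independent, and in the basis $(v,Av)$ the first column of $A$ is $\begin{pmatrix}0\\1\end{pmatrix}$, so its diagonal is $(0,\tr A)=(0,0)$. For $n\ge 3$: $A$ again has a non-eigenvector $v$, and completing $(v,Av)$ to a basis puts $A$ in the form $\begin{pmatrix}0 & R\\ C & B\end{pmatrix}$ with $B\in\Mat_{n-1}(\F)$ and $\tr B=\tr A=0$. If $B$ is nonscalar, the induction hypothesis yields $P$ with $PBP^{-1}$ of zero diagonal, and conjugating the whole matrix by $\Diag(1,P)$ produces a zero-diagonal matrix similar to $A$; if $B=\mu I_{n-1}$ with $\mu=0$, the block matrix already has zero diagonal. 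A short computation shows that $B$ can be taken nonscalar whenever one can choose $v$ with $v,Av,A^2v$ linearly independent — since $B=\mu I_{n-1}$ would force $A^2v\in\Vect(v,Av)$ — and such a $v$ exists exactly when the minimal polynomial of $A$ has degree at least $3$.

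So the remaining case — and the main obstacle — is a nonscalar trace-zero $A$ whose minimal polynomial has degree $2$, where the reduction above may stall on a trailing block $\mu I_{n-1}$ with $\mu\ne 0$ (forcing $\car\F\mid n-1$): this is precisely where positive characteristic genuinely intervenes, and $A$ must be handled directly. Up to similarity such an $A$ is a direct sum of equal $2\times 2$ companion matrices, or of the form $\lambda I_n+N$ with $N^2=0\ne N$, or a diagonalisable matrix with two eigenvalues $\lambda\ne\nu$ of respective multiplicities $s$ and $n-s$ satisfying $s\lambda+(n-s)\nu=0$. I would dispose of each by writing down an explicit zero-diagonal conjugate. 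Some sub-cases are immediate: a companion block $\begin{pmatrix}0 & -q\\ 1 & 0\end{pmatrix}$ already has zero diagonal, and if $\car\F\nmid n$ then $\lambda=0$ in the $\lambda I_n+N$ case, so $A=N$ is similar to a strictly upper triangular matrix. The rest rest on the all-ones matrix $J_n$, which is nilpotent of square zero when $\car\F\mid n$ and idempotent of rank $1$ when $\car\F\mid n-1$: thus $\lambda(I_n-J_n)$ together with block nilpotents of prescribed rank settle the $\lambda I_n+N$ case, zero-diagonal idempotents built from the matrices $I_m-J_m$ on suitable sub-blocks (suitably shifted and scaled) settle the diagonalisable two-eigenvalue case, and the irreducible-companion case is handled similarly by realising the $\F[\alpha]$-vector-space structure, where $\alpha$ is a root of the minimal polynomial, in a well-chosen $\F$-basis. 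These case-by-case constructions are the technical heart of the argument; assembled with the induction they yield (i)$\Rightarrow$(ii), and hence the theorem.
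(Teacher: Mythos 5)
The paper does not supply its own proof of this theorem: it cites Halmos's exercise and Fillmore's theorem for the only nontrivial implication (i)$\Rightarrow$(ii), and observes, just as you do, that (ii)$\Rightarrow$(iii) by the upper/lower strictly triangular split and that (iii)$\Rightarrow$(i) by linearity of the trace. So your proposal is an independent attempt, and the easy directions and the base case $n=2$ are correct.

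The inductive step contains a genuine logical error that then makes the whole case-by-case detour both unnecessary and, as written, a gap. You correctly note that choosing $e_3 = A^2v$ forces the trailing block $B$ to be nonscalar, and that this specific choice is available exactly when some $v$ has $v, Av, A^2v$ independent, i.e.\ when the minimal polynomial has degree $\geq 3$. But you then conclude that ``the reduction above may stall'' when the minimal polynomial has degree $2$. That inference is wrong: the choice $e_3=A^2v$ is sufficient but not necessary, and in fact a nonscalar $B$ can \emph{always} be arranged when $n\geq 3$, whatever the minimal polynomial. Indeed, suppose toward a contradiction that $B=\mu I_{n-1}$ for every completion $(e_3,\dots,e_n)$ of $(v,Av)$ to a basis. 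The value $\mu$ is pinned down as the coefficient of $Av$ in $A^2v$ (it is the $(1,1)$-entry of $B$, independent of the completion), and scalarity of $B$ forces $(A-\mu I_n)e_i\in\F v$ for each $i\geq 3$. Since $e_3,\dots,e_n$ may be any basis of any complement of $\Vect(v,Av)$, this means $(A-\mu I_n)w\in\F v$ for every $w\notin\Vect(v,Av)$. The set $\{w:(A-\mu I_n)w\in\F v\}$ is a linear subspace containing the complement of the proper subspace $\Vect(v,Av)$, hence it is all of $V$; in particular $(A-\mu I_n)v\in\F v$, i.e.\ $Av\in\F v$, contradicting the choice of $v$. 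So the induction never stalls, and no special treatment of the minimal-polynomial-degree-$2$ case is needed.

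This matters because your treatment of that residual case is not actually a proof. The rank-$1$ instance of $\lambda I_n + N$ is handled by $\lambda(I_n - J_n)$, but ``block nilpotents of prescribed rank'' is not a construction: for $r>1$ you need a square-zero nilpotent of rank $r$ with constant diagonal $-\lambda$, and exhibiting one requires an argument (for example, when $\car\F = p\mid n$ one cannot simply stack copies of $\lambda J_p$ once $r$ exceeds $n/p$). The diagonalisable two-eigenvalue case needs a rank-$s$ idempotent with constant diagonal, and ``suitably shifted and scaled $I_m-J_m$ on sub-blocks'' does not specify how to reach arbitrary rank $s$; the irreducible-quadratic case is left entirely to ``a well-chosen $\F$-basis.'' Precisely because these are the positive-characteristic configurations where naive block-by-block reasoning fails, the phrase ``technical heart of the argument'' signals that the proof is incomplete there. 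Replacing the whole detour by the observation above closes the gap and shortens the argument to a clean induction.
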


In this theorem, the implication (iii) $\Rightarrow$ (i) is obvious because the trace is linear and
the trace of any nilpotent matrix is zero. The implication (ii) $\Rightarrow$ (iii) is easy, as we can split any matrix with diagonal
zero into the sum of a strictly upper-triangular matrix and a strictly lower-triangular one, both of which are nilpotent.
The only nontrivial part is (i) $\Rightarrow$ (ii), which has been known for a long time
(see e.g.\ \cite{Halmos} p.109 exercise 1) and is a special case of a theorem of Fillmore \cite{Fillmore} on the possible diagonal vectors of the matrices
in the similarity class of a matrix.

Here, we are interested in analogues of Theorem \ref{theo:basic} over (noncommutative) division rings.
To set things straight immediately, for us a division ring $D$ (also called a skew field) is an associative ring with unity $1_D \neq 0_D$
in which every nonzero element is invertible. It should also be pointed out that Theorem \ref{theo:basic} fails over general division rings,
even if we use the correct definition of a scalar matrix in that case (that is, a diagonal matrix with all diagonal entries equal \emph{and in the center} of $D$,
or equivalently a central element of the ring $\Mat_n(D)$).
One of the problems lies in the fact that the trace of a matrix, defined as the sum of all its diagonal entries, is no longer invariant
under similarity. In fact, given two $n$-by-$n$ matrices $A,B$ with entries in a ring $R$, the difference $\Tr(AB)-\Tr(BA)$ lies
in the additive subgroup $\calW$ of $(R,+)$ generated by the Lie commutators $[a,b]:=ab-ba$ with $a,b$ in $R$, and the correct invariant is then the
coset of $\Tr(A)$ in the quotient group $(R/\calW,+)$, called the \textbf{reduced trace} and denoted by $\tr(A)$ (with small ``t").
The reduced trace of any nilpotent matrix over $D$ is zero (because every such matrix is similar to a strictly upper-triangular one),
and hence the reduced trace of any sum of nilpotent matrices is zero.
Conversely, it is known that every matrix with entries in a division ring $D$ and
whose trace is a sum of Lie commutators is a sum of nilpotent matrices \cite{Harris}, but bounding the number of summands in such a decomposition depends on the division ring under consideration
(see, e.g., the last part of \cite{Breaz} for some results on the matter).

The present article is motivated by the recent article of de la Cruz et al \cite{dLC}, in which the problem is considered over the quaternion division rings.
These division rings are naturally interesting because they are the noncommutative ones that are closest to commutativity in terms of degree over their center.
To start with, let $\F$ be a field (possibly of characteristic $2$). A quaternion algebra over $\F$ can be defined in various equivalent ways:
\begin{itemize}
\item As an $\F$-algebra that is isomorphic to the Clifford algebra of a regular $2$-dimensional quadratic form over $\F$;
\item As a $4$-dimensional central (i.e., the center is reduced to the $\F$-scalar multiples of the unity)
and simple (i.e., without proper two-sided ideals) algebra over $\F$.
\end{itemize}
Let $\calQ$ be such a quaternion algebra. There is a unique involution $q \mapsto \overline{q}$ of the $\F$-algebra $\calQ$
(i.e., an involutory antiautomorphism) such that
$q\overline{q}=\overline{q}q$ and $q \overline{q} \in \F$ for all $q \in \calQ$, and we call it the quaternionic conjugation.
The mapping
$$N : q \in \calQ \mapsto q\overline{q} \in \F$$
is then the \textbf{quaternionic norm}, and
the mapping
$$t : q \in \calQ \mapsto q+\overline{q}$$
is valued in the center $\F$ and called the \textbf{(reduced) trace}.
In particular, we have the quadratic identity
\begin{equation}\label{eq:quadidentity}
\forall q \in \calQ, \quad q^2=\tr(q)\,q-N(q).
\end{equation}
It can be shown that $t$ is a central function on $\calQ$ (i.e.\ $t(xy)=t(yx)$ for all $x,y$ in $\calQ$) and that
it spans the $\F$-linear subspace of all central $\F$-linear forms on $\calQ$.
Beware that the Galois trace of $\calQ$ over $\F$, defined for $x \in \calQ$ as the trace of the $\F$-endomorphism $q \mapsto xq$ of $\calQ$,
equals twice the reduced trace $t$, and in particular vanishes if $\F$ has characteristic $2$ (a special case we do not want to discard here).
In particular $t(1_\calQ)=2.1_\F$.
The quaternionic norm is a regular quadratic form on the $\F$-vector space $\calQ$, with polar form $(a,b) \mapsto \tr(a\overline{b})$.

The kernel of the trace is the linear hyperplane $\calQ^p$ of \textbf{pure} quaternions, and it is known that it coincides
with the subgroup of $(\calQ,+)$ spanned by the Lie commutators $[a,b]$ with $a,b$ in $\calQ$. Better still (see Remark \ref{rem:commutator})
every pure quaternion \emph{is} a Lie commutator. If $\car(\F) \neq 2$ then $\F \oplus \calQ^p=\calQ$, but otherwise
$\F \subset \calQ^p$, which requires much caution. In any case, the trace induces an isomorphism of $\F$-vector spaces from $\calQ/\calQ^p$ to $\F$,
and hence for a matrix $A$ in $\Mat_n(\calQ)$ it is more convenient to define its reduced trace as $t(\Tr(A))$, which we do from now on.

If $\calQ$ is not a division ring then it is isomorphic to $\Mat_2(\F)$ (the ``split case", see e.g., \cite{Voight}),
and then $\Mat_n(\calQ)$ is isomorphic as an $\F$-algebra to $\Mat_{2n}(\F)$: hence this case is already covered by the field case.
Therefore, throughout the rest of the article we will assume that $\calQ$ is a division ring.
In that case, we can restate the recent result of de la Cruz et al \cite{dLC}:

\begin{theo}
Let $\calQ$ be a quaternion division algebra, and let $n \geq 2$.
Then every element of $\Mat_n(\calQ)$ with reduced trace zero is the sum of three nilpotent matrices,
and there exist elements of $\Mat_n(\calQ)$ with reduced trace zero that are neither scalar matrices nor the sum of two nilpotent matrices.
\end{theo}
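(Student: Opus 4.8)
The plan is to treat the two assertions of the theorem separately, using throughout that ``being a sum of $k$ nilpotent matrices'' is invariant under conjugation, that a matrix with zero diagonal is automatically a sum of two nilpotent matrices (split it into its strictly upper-triangular and strictly lower-triangular parts), and that if $B \in \Mat_n(\F)$ is a sum of $k$ nilpotent $\F$-matrices then, for any $z \in \calQ$, the matrix obtained from $B$ by left-multiplying every entry by $z$ is a sum of $k$ nilpotent $\calQ$-matrices, because $(zB)^j = z^j B^j$.

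For the first assertion I would proceed by a dichotomy on a matrix $A$ of reduced trace zero. If $A$ is similar to a matrix with zero diagonal, it is already a sum of two nilpotent matrices. The main work is to pin down the opposite case: one needs an analogue over $\calQ$ of Fillmore's theorem on the diagonals of a similarity class, to the effect that, for $n \geq 2$, a matrix of reduced trace zero fails to be similar to a zero-diagonal matrix only when it has one of the exceptional shapes foretold by the abstract --- a scalar matrix $\lambda I_n$ with $t(n\lambda)=0$, a rank-one perturbation of such a scalar matrix, or (only if $n=3$) a matrix similar to $q I_3$. Establishing this reduction is the main obstacle of the first part: it is a Fillmore-style argument that zeroes out the diagonal entries one at a time by conjugating with elementary transvections, but it must be run with the noncommutativity in mind, since over $\calQ$ the trace moves by a Lie commutator at each step, so the relevant obstruction to clearing a diagonal entry is really the \emph{reduced} trace. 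Once this is in place, each exceptional family is dispatched with three summands. For a scalar matrix one reduces, using the classical fact (an easy consequence of Theorem~\ref{theo:basic}) that over any field every trace-zero matrix is a sum of three nilpotent matrices, to the subcase where the ordinary trace $n\lambda$ is nonzero; this forces $\car(\F)=2$ and $n$ odd, and one settles it by hand, using that in characteristic $2$ every element of $\F$ lies in $\calQ^p$ and is hence a Lie commutator. For a rank-one perturbation $A=\lambda I_n+R$ one chooses a nonzero nilpotent $N_0$ such that $A-N_0$ is no longer on the exceptional list --- possible once $n$ is not too small, the remaining small values of $n$ being finite checks --- and writes $A = N_0 + (A-N_0)$, the second term being a sum of two nilpotent matrices by the generic case; and the $3$-by-$3$ unispectral case is a bounded explicit computation.

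For the second assertion I would exhibit, for each $n\geq 2$, the matrix $A := \Diag(0,\dots,0,p) \in \Mat_n(\calQ)$ where $p$ is a nonzero pure quaternion (in characteristic $\neq 2$ this lies automatically outside $\F$). It is plainly not a scalar matrix, and $t(\Tr A)=t(p)=0$, so it has reduced trace zero; it remains to show it is not a sum of two nilpotent matrices. Suppose $A = N_1+N_2$ with $N_1,N_2$ nilpotent. The crux is that $A$ restricts to an invertible operator on its one-dimensional image $\calQ e_n$ (namely left multiplication by $p$), whereas a nilpotent operator admits no nonzero invariant subspace on which it is invertible; feeding this into an analysis of the descending chains $\im N_i \supseteq \im N_i^{2} \supseteq \cdots$ forces $N_1^{2}=N_2^{2}=0$. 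Given that, $N_2 = A-N_1$ yields $A N_1 + N_1 A = A^{2}$, and since $A$ has only its $n$-th row (resp.\ column) possibly nonzero, so does $A N_1$ (resp.\ $N_1 A$); reading the identity entry by entry forces every entry of $N_1$ off the top-left $(n-1)$-by-$(n-1)$ block and off the $(n,n)$ slot to vanish, whence $N_1 = B \oplus (\delta)$ with $\delta := (N_1)_{nn}$ and $p\delta + \delta p = p^{2}$; but then the direct summand $(\delta)$ is nilpotent, so $\delta = 0$, so $p^{2}=0$, contradicting $p \neq 0$. Hence $A$ is not a sum of two nilpotent matrices. The remaining obstacle of this part is to carry out the reduction to the case $N_1^{2}=N_2^{2}=0$ cleanly for all $n$ (for $n\leq 3$ it is immediate, since a $3$-by-$3$ nilpotent of square zero has rank at most one, and the rank-one case is excluded by a quick direct argument).
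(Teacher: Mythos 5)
The paper does not reprove this theorem (it is quoted from \cite{dLC}); but the machinery the paper builds around the second assertion is Proposition~\ref{prop:specialtypeII}, and measuring your argument against that is instructive. Your example $A=\Diag(0,\dots,0,p)$ is exactly the paper's $\Diag(q,0,\dots,0)$ up to a permutation, so the target is right. The problem is the pivotal reduction ``$A=N_1+N_2$ with $N_1,N_2$ nilpotent forces $N_1^2=N_2^2=0$.'' You gesture at an ``analysis of the descending chains $\im N_i\supseteq\im N_i^2\supseteq\cdots$,'' but no such analysis is given, and I do not believe one exists in the generality you use it. The only properties of $A$ your reduction invokes are that $\rk A=1$ and that $A$ is invertible on its one-dimensional image; these are exactly as true for $A=\Diag(0,\dots,0,p)$ over the split algebra $\calQ\cong\Mat_2(\F)$ (take $p$ a trace-zero invertible $2\times 2$ matrix), where $A$ is a $2n$-by-$2n$ $\F$-matrix of trace zero and hence, by Theorem~\ref{theo:basic}, similar to a zero-diagonal matrix; splitting that into strictly upper and strictly lower triangular parts gives a decomposition $A=N_1+N_2$ with $N_1^2\neq 0$ as soon as $2n\geq 3$. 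So the reduction cannot follow from rank and invertibility-on-image alone; it would have to use the division property in a way you never touch. The sentence meant to secure the case $n\leq 3$ (``a $3$-by-$3$ nilpotent of square zero has rank at most one, and the rank-one case is excluded by a quick direct argument'') is also incoherent as written: square-zero is what you want to \emph{prove}, not what you may assume, and the rank-one case is the unproblematic one. Once $N_1^2=N_2^2=0$ is granted, your block computation and the final contradiction $\delta=0\Rightarrow p^2=0$ are fine, but that hypothesis is the entire difficulty. The paper gets the nonexistence statement from Proposition~\ref{prop:specialtypeII}, whose proof is a genuine induction with a careful case analysis (common eigenvector, local nilindex, etc.); there is no shortcut of the kind you propose.

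For the first assertion your plan is to invoke the full classification of Theorem~\ref{theo:n>=3} (``a Fillmore-style argument that zeroes out the diagonal entries one at a time'') and then dispatch the three exceptional families with a third nilpotent summand. That classification is the main theorem of the paper and occupies several sections (Lemma~\ref{lemma:3LLD}, the $n=3$ base case via Lemma~\ref{lemma:completionn=2}, and a delicate induction using Lemmas~\ref{lemma:compatlemma1} and~\ref{lemma:translatetypeIII}); naming it is not proving it, and the exceptional-case treatments you sketch (the ``by hand'' characteristic-$2$ scalar case, the claim that $A-N_0$ can always be steered off the exceptional list) are themselves nontrivial and left open. Note also that \cite{dLC} proves the three-nilpotent statement directly, without the classification, so routing through Theorem~\ref{theo:n>=3} is a detour that assumes far more than the target requires. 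In short: the skeleton is reasonable, but both halves rest on unproved claims, and the key intermediate step in the second half appears to be false as stated.
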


Of course, for $n=1$ only the zero matrix is a sum of nilpotent ones, so the condition $t(M)=0$ is insufficient in that case.
Moreover, when $\calQ$ is Hamilton's real quaternion algebra $\mathbb{H}$, de la Cruz et al characterized the $2$-by-$2$ matrices that are sums of two nilpotent
matrices, and also arbitrary matrices that are sums of two square-zero matrices (i.e., nilpotent matrices with nilindex at most $2$).

The purpose of this article is to give a clear characterization of the elements of $\Mat_n(\calQ)$ that split into the sum of two nilpotent matrices.
At this point we cannot state this characterization, as it requires some elements of spectral theory (i.e.,
the notions of eigenvectors, eigenspaces and diagonalisability) over division rings. We review the necessary points in the next section, and will state our results afterwards.

\subsection{A quick review of spectral theory over division rings}\label{section:reviewspectral}

Throughout, we will use the endomorphisms viewpoint.
Let $D$ be a division ring, and denote its center by $C$. We write $a \simeq b$ to state that the elements $a$ and $b$
of $D$ are conjugated (i.e.\, $a=\gamma b\gamma^{-1}$ for some $\gamma \in D^\times$).
Throughout, we will consider finite-dimensional \emph{right} vector spaces over $D$
(rather than left vector spaces), and will simply call them finite-dimensional vector spaces over $D$.
For such vector spaces $V$ and $V'$, with respective bases $(e_1,\dots,e_p)$ and $(f_1,\dots,f_n)$, and
a linear mapping $u : V \rightarrow V'$, we define the matrix $M=(m_{i,j})$ of $u$ in these bases as the one
such that $u(e_j)=\sum_{i=1}^n f_i\,m_{i,j}$ for all $j \in \lcro 1,p\rcro$.
This way, we obtain the traditional way of interpreting $u$ in terms of matrix multiplication:
for a vector $x \in V$, with (column) coordinate vector $X$ in $(e_1,\dots,e_p)$, the coordinate vector of $u(x)$ in $(f_1,\dots,f_n)$ is $MX$.
For a matrix representing an endomorphism, we always take $(e_1,\dots,e_p)=(f_1,\dots,f_n)$.

Now, let $u$ be an endomorphism of a finite-dimensional $D$-vector space $V$.
Given two matrices $A$ and $B$ that represent $u$, the matrices $A$ and $B$ are similar, so
$\tr(A)=\tr(B)$ (remembering that $\Tr(A)-\Tr(B)$ is a sum of Lie commutators in $D$).
Hence $\tr(u):=\tr(A)$ is independent of the choice of a representing matrix $A$ for $u$.
This defines a $C$-linear mapping $\tr : \End_D(V) \rightarrow D/\calF$, where $\calF$ stands for the additive subgroup generated by
the Lie commutators $[a,b]$ with $a,b$ in~$D$.

We now turn to eigenvalues. An \textbf{eigenvalue} of $u$ is a scalar $q \in D$ for which the following equivalent conditions hold:
\begin{enumerate}[(i)]
\item The mapping $x \in V \mapsto u(x)-xq$ is noninjective;
\item There exists a nonzero vector $x \in V$ such that $u(x)=xq$. Such a vector is then called an \textbf{eigenvector} of $u$ attached to $q$.
\end{enumerate}
Equivalently, an eigenvector of $u$ is a nonzero vector $x$ such that the line $xD$ is invariant under $u$.

In general, an endomorphism $u$ can have infinitely many eigenvalues, even in the finite-dimensional case. In fact, if $q$ is an eigenvalue and $\vec{x}$
is a corresponding eigenvector then, for all $g \in D \setminus \{0\}$, the nonzero vector $\vec{x}g$ is an eigenvector of $u$ with corresponding eigenvalue $g^{-1} q g$,
and in particular the set of all eigenvectors of $u$ attached to $q$ is not a $D$-linear subspace unless $q$ is central.
The set of all eigenvalues of $u$ is a union of conjugacy classes in the ring $D$, and what is relevant is actually
the set of those conjugacy classes. Given pairwise nonconjugate eigenvalues $q_1,\dots,q_n$ and corresponding eigenvectors $x_1,\dots,x_n$, one proves that $x_1,\dots,x_n$
are $D$-linearly independent. Hence the set of all conjugacy classes of eigenvalues is finite with cardinality at most $\dim_D V$.
We will say that $u$ is \textbf{unispectral} whenever it has exactly one conjugacy class of eigenvalues.

We say that $u$ is \textbf{diagonalisable} if the following equivalent conditions hold:
\begin{enumerate}[(i)]
\item There is a basis of $V$ in which all the vectors are eigenvectors of $u$.
\item At least one diagonal matrix is a representing matrix for $u$.
\end{enumerate}
When we have a basis $(e_1,\dots,e_n)$ of eigenvectors of $u$, with respective corresponding eigenvalues $q_1,\dots,q_n$,
it is easy to prove that:
\begin{enumerate}[(i)]
\item The spectrum of $u$ is the union of the conjugacy classes of $q_1,\dots,q_n$;
\item Every eigenvector of $u$ belongs to the $D$-linear span of $(e_i)_{i \in I}$ for some nonempty subset $I$ of $\lcro 1,n\rcro$
such that the eigenvalues $q_i$, with $i \in I$, are pairwise conjugate.
\end{enumerate}
This shows that the diagonalisable unispectral endomorphisms are exactly those that are represented by a diagonal matrix with all diagonal
entries equal. Beware that these are not the scalar matrices of $\Mat_n(D)$, which represent the multiplications $x \mapsto x \lambda$ with $\lambda \in C$.

We will resume reviewing key results of spectral theory later and are now ready to present the main examples which lead to our central result.

\subsection{Key counterexamples, and the main theorem}

We will single out three kinds of special endomorphisms that fail to split into the sum of two nilpotent ones,
yet can have trace zero in some cases.
The first two are relevant for any division ring, so we keep the greatest level of generality and state
them for an arbitrary division ring $D$ with center $C$, and a finite-dimensional $D$-vector space $V$.

To start with, we have the same obstruction as in Theorem \ref{theo:basic} regarding scalar endomorphisms.
An endomorphism of \textbf{special type I} is simply a nonzero scalar endomorphism, i.e., an endomorphism of the form
$\id_V \lambda$ for some \emph{nonzero} $\lambda \in C$, and it is straightforward to see that no such endomorphism
can split into the sum of two nilpotent ones, as in a nontrivial ring the sum of a central invertible element with a nilpotent element is always invertible
(and hence non-nilpotent).

Assume now that $\dim V\geq 2$.
An endomorphism of $V$ is of \textbf{special type II} when it is a rank $1$ perturbation of a scalar endomorphism, i.e.,
the sum of a scalar endomorphism with a rank $1$ endomorphism.
If $\dim V \geq 3$, it is clear that such a decomposition is unique (because the only way a difference of two rank $1$
endomorphisms can be scalar is if they are equal).
Still in that case, and if $u=\id_V \lambda+a$ is of special type II with $\lambda \in C$ and $\rk a=1$,
the range $\im a$ of $a$ is $1$-dimensional and invariant under $a$, so all its nonzero vectors
are eigenvectors of $a$. The conjugacy class of eigenvalues attached to those vectors is called the
\textbf{supertrace} of $a$, denoted by $\str(a)$, and then it is obvious that $(\dim V).\lambda+\str(a)$
is also a conjugacy class, and we set
$$\str(u):=(\dim V).\lambda+\str(a)$$
and call it the supertrace of $u$.
If we take a representative $q$ of the supertrace of $u$, then $\tr(u)$ is the coset of $q$.
If $\dim V=2$, then the decomposition is not unique, but we will prove anyway in Section \ref{section:specialtypeII} that the definition of the supertrace
is independent of the chosen decomposition.

Then, we have a first important result, to be proved later (see Section \ref{section:specialtypeII}).

\begin{prop}\label{prop:specialtypeII}
Let $D$ be a division ring, and $u$ be an endomorphism of a finite-dimensional $D$-vector space $V$
with special type II.
The following conditions are then equivalent:
\begin{enumerate}[(i)]
\item The supertrace of $u$ is zero.
\item $u$ is represented by a matrix with diagonal zero.
\item $u$ is the sum of two nilpotent endomorphisms of $V$.
\end{enumerate}
\end{prop}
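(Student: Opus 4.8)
The implications $(ii)\Rightarrow(iii)$ and $(iii)\Rightarrow(i)$ are the easy ones and the plan is to dispose of them first. For $(ii)\Rightarrow(iii)$, if $u$ is represented by a matrix with zero diagonal, split that matrix as the sum of its strictly-upper-triangular and strictly-lower-triangular parts, each of which is nilpotent; this is exactly the argument recalled after Theorem~\ref{theo:basic}. For $(iii)\Rightarrow(i)$, recall that a nilpotent endomorphism has zero reduced trace, hence $\tr(u)=0$; but since $u$ has special type~II, $\tr(u)$ is the coset of any representative $q$ of $\str(u)$, so $\tr(u)=0$ forces $q\in\calF$, and one then checks that a conjugacy class contained in $\calF$ (the commutator subgroup) must be $\{0\}$ — indeed $\str(u)$ being a single conjugacy class, if one representative lies in $\calF$ then all do, and for the specific case at hand one argues that a nonzero conjugacy class cannot be killed by the reduced trace because the reduced trace is a nonzero central form; I would make this precise using the structure of $D/\calF$.

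The substantive implication is $(i)\Rightarrow(ii)$, and this is where I expect the real work to be. Write $u=\id_V\lambda+a$ with $\lambda\in C$ and $\rk a=1$, and let $d=\dim_D V$. The idea is to produce a basis $(e_1,\dots,e_d)$ of $V$ in which the matrix of $u$ has zero diagonal. Pick a nonzero vector $v$ spanning $\im a$; then $a(v)=v\mu$ for some $\mu\in D$ representing $\str(a)$, so $v$ is an eigenvector of $u$ with eigenvalue $\lambda+\mu$, a representative of $\str(u)$. The hypothesis $\str(u)=0$ means $\lambda+\mu$ is conjugate to $0$, i.e.\ $\lambda+\mu=0$; after replacing $v$ by a suitable scalar multiple (using $g^{-1}(\lambda+\mu)g$ for the appropriate $g$) we may as well assume the eigenvalue is literally $0$, so $u(v)=0$. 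Now extend: since $a$ has rank $1$, $\Ker a$ is a hyperplane, and on a complement... the cleanest route is to note $u$ restricted to a complementary line behaves like $\id\lambda$ plus something, but the key structural fact is that $a=\varphi(\cdot)\,w$ for a linear form-like object; I would instead argue directly that $u$ stabilizes the line $vD$ with $u(v)=0$, pass to the quotient $V/vD$ on which $u$ induces an endomorphism $\bar u$, and analyze $\bar u$.

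The induction on $d$ is the natural mechanism. On $V/vD$, the induced map $\bar u$ is again $\id\lambda$ plus (the image of) a rank $\leq 1$ map, so it is scalar or of special type~II of smaller dimension; and its reduced trace is $\tr(u)$ minus the contribution of the eigenvalue $0$ we extracted, hence still zero, so by induction $\bar u$ is represented by a zero-diagonal matrix — lift a corresponding basis of $V/vD$ to $v_2,\dots,v_d$ in $V$ and complete with $v_1=v$; the matrix of $u$ in $(v_1,\dots,v_d)$ then has zero diagonal in rows/columns $2,\dots,d$ and entry $0$ in the top-left slot because $u(v)=0$. The delicate points are: handling the base case $d=2$ separately (where $\bar u$ is a scalar endomorphism of a line and one must check directly that zero supertrace gives a zero-diagonal $2\times 2$ matrix — this is essentially the classical argument but over $D$ one must be careful that conjugating $v$ changes the eigenvalue within its class); ensuring the quotient map is genuinely of special type~II and not accidentally scalar-of-the-wrong-kind when the perturbation dies in the quotient; and confirming the reduced-trace bookkeeping is additive along the short exact sequence $0\to vD\to V\to V/vD\to 0$, which follows from the block-triangular shape of the matrix of $u$ adapted to $vD$. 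I expect the main obstacle to be the careful treatment of eigenvalues up to conjugacy — replacing an eigenvector by a scalar multiple to normalize the eigenvalue to $0$ — and making sure this normalization is compatible with the inductive construction of the rest of the basis. The $\dim V=2$ case may also require invoking the well-definedness of the supertrace proved in Section~\ref{section:specialtypeII}, since the decomposition is not unique there.
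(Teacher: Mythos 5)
Your treatment of $(ii)\Rightarrow(iii)$ is fine, but both of the remaining implications have genuine gaps, and they are precisely where the content of the proposition lives.

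For $(iii)\Rightarrow(i)$: you deduce $\tr(u)=0$, identify $\tr(u)$ with the coset of a representative $q$ of $\str(u)$, and then claim a nonzero conjugacy class cannot lie in the commutator subgroup $\calF$. This is false. Remark~\ref{rem:commutator} in the paper shows that \emph{every} pure quaternion is a Lie commutator, and in the other direction the paper's motivating example $\Diag(q,0,\dots,0)$, with $q$ a pure noncentral quaternion, has $\tr=0$ but $\str$ equal to the conjugacy class of $q\neq 0$. So $\tr(u)=0$ does not imply $\str(u)=\{0\}$; if it did, the supertrace would be redundant and the whole proposition would reduce to a trace computation. The implication $(iii)\Rightarrow(i)$ is the hard part, and the paper proves it by induction on $\dim V$: choose $x\in\Ker n_2\setminus\{0\}$, split on whether $n_1(x)=0$, and when $n_1(x)\neq 0$ analyze the local nilindex $d$ of $x$ with respect to $n_1$ and the invariant subspace it generates. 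This inductive argument is the core of the proof and your proposal skips it entirely.

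For $(i)\Rightarrow(ii)$: your supertrace bookkeeping is wrong. With $u=\id_V\lambda+a$, $\dim V=n$ and $a(v)=v\mu$ for $v$ spanning $\im a$, the eigenvalue attached to $v$ is indeed $\lambda+\mu$, but the supertrace is $\str(u)=n\lambda+\str(a)$, not $\lambda+\str(a)$. Hence $\str(u)=\{0\}$ gives $\mu=-n\lambda$, and the eigenvalue of $v$ equals $\lambda+\mu=(1-n)\lambda$, which is nonzero in general. Since $(1-n)\lambda$ is central, rescaling $v$ does not move this eigenvalue — $g^{-1}(1-n)\lambda g=(1-n)\lambda$ — so you cannot ``normalize it to $0$'', and $u(v)\neq 0$, so the plan to peel off a kernel vector and induct on $V/vD$ collapses at the first step. (A secondary issue: even if this worked, the induced map on $V/vD$ could be scalar rather than of special type II, so the inductive hypothesis as you state it would not apply.) The paper's route for $(i)\Rightarrow(ii)$ is quite different and avoids all this: it exhibits a basis in which $a$ is represented by a matrix whose entries all lie in the center $C$, so that $u$ is represented by a \emph{nonscalar} matrix of $\Mat_n(C)$ with trace $\mu+n\lambda=0$, and then applies Theorem~\ref{theo:basic} (Fillmore) over the field $C$. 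You should reconstruct the correct definition of the supertrace before attempting $(i)\Rightarrow(ii)$, and you need an entirely new idea for $(iii)\Rightarrow(i)$.
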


For example, if we take a pure quaternion $q \in \calQ^p$ that is not in $\F$, then the diagonal matrix
$\Diag(q,0,\dots,0)$ represents an endomorphism of special type II, with supertrace the conjugacy class of $q$,
and reduced trace zero. By Proposition \ref{prop:specialtypeII}, this matrix is not the sum of two nilpotent ones although its reduced trace equals zero.

Now, we consider a quaternion division algebra $\calQ$ over a field $\F$.
The last special case we will encounter is quite strange, as it happens only in the $3$-dimensional case.

\begin{prop}\label{prop:specialtypeIII}
Let $q \in \calQ \setminus \{0\}$. Then the diagonal matrix $\Diag(q,q,q)$ is not the sum of two nilpotent matrices of $\Mat_3(\calQ)$.
\end{prop}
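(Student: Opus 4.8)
The plan is to reduce to the case $q\notin\F$ and then to rule out a decomposition $\Diag(q,q,q)=N_1+N_2$ into nilpotent matrices by a rigidity argument. If $q$ is central, then $\Diag(q,q,q)=qI_3$ is a nonzero scalar matrix, hence an endomorphism of special type~I, which we already know is not a sum of two nilpotent matrices; so from now on assume $q\notin\F$. I would record at the outset the data attached to $q$: put $s:=t(q)\in\F$ and $N:=N(q)$, which is nonzero since $\calQ$ is a division algebra and $q\neq0$. By the quadratic identity~\eqref{eq:quadidentity} the matrix $A:=\Diag(q,q,q)$ satisfies $A^2=sA-NI_3$, and $A$ is invertible. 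I also note that $X^2-sX+N$ has no root in $\F$: if $\lambda\in\F$ were one, then $X^2-sX+N=(X-\lambda)\bigl(X-(s-\lambda)\bigr)$ with central factors, so $(q-\lambda)\bigl(q-(s-\lambda)\bigr)=0$ in $\calQ$, forcing $q\in\{\lambda,s-\lambda\}\subseteq\F$, a contradiction.

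Assume for contradiction $A=N_1+N_2$ with $N_1,N_2$ nilpotent, and view all three as endomorphisms of $V:=\calQ^3$, so $N_i^3=0$ (a nilpotent endomorphism of a $3$-dimensional space has nilindex at most $3$). Since $A$ is invertible, $\ker N_1\cap\ker N_2=\{0\}$; each $N_i$ is nonzero (otherwise the other is $A$, not nilpotent), so $\dim\ker N_1+\dim\ker N_2\le3$ forces one of them—say $N_1$—to have a $1$-dimensional kernel, i.e.\ to be a single nilpotent Jordan block of nilindex $3$. Then $\ker N_1=\operatorname{Im}N_1^2\subseteq\operatorname{Im}N_1=\ker N_1^2$ while $\ker N_1\not\subseteq\ker N_2$, so $\ker N_2\neq\operatorname{Im}N_1$. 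I would now pick a basis $(e_1,e_2,e_3)$ in which $N_1$ is the Jordan matrix $J$ (so $N_1e_1=0$, $N_1e_2=e_1$, $N_1e_3=e_2$) and use the residual freedom in such a basis—conjugation by invertible upper‑triangular Toeplitz matrices, which commute with $J$, and which lets $e_3$ be any vector outside $\operatorname{Im}N_1$ (so $e_2=N_1e_3$ any vector in $\operatorname{Im}N_1\setminus\ker N_1$)—to place $\ker N_2$ conveniently, in one of two cases: \textbf{(I)} when $\ker N_2\not\subseteq\operatorname{Im}N_1$ (automatic if $\dim\ker N_2=2$), arrange $e_3\in\ker N_2$, so the third column of the matrix of $N_2$ vanishes; \textbf{(II)} when $\ker N_2\subseteq\operatorname{Im}N_1$ (which forces $\dim\ker N_2=1$ and $\ker N_2\neq\ker N_1$), arrange $e_2\in\ker N_2$, so the second column of the matrix of $N_2$ vanishes. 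Let $B$ be the matrix of $A$ in this basis; conjugation preserves $A^2=sA-NI_3$, so $B^2=sB-NI_3$, and $B-J$ is the matrix of $N_2$.

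The heart of the proof is then a short entrywise computation in each case. Expanding $B^2=sB-NI_3$ with $B=J+(B-J)$ under the vanishing‑column constraint determines almost all entries of $B-J$; in particular $B-J$ is block upper‑triangular of the form $\left(\begin{smallmatrix}M&0\\ \ast&0\end{smallmatrix}\right)$ with a $2\times2$ block $M$ (in case~(II) this is seen after swapping $e_2$ and $e_3$), so $(B-J)^3=0$ forces $M^3=0$, hence $M^2=0$ since $2\times2$ nilpotents have nilindex at most $2$. In case~(I) the computation gives $M=\left(\begin{smallmatrix}x&-1\\ \ast&s\end{smallmatrix}\right)$ with $x^2-sx+N=0$, and the $(1,2)$‑entry of $M^2$, namely $-x-s$, must vanish: so $x=-s\in\F$ is a root of $X^2-sX+N$, contradicting the first step. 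In case~(II) it gives $M=\left(\begin{smallmatrix}s&\ast\\ 0&k\end{smallmatrix}\right)$ with $k^2-sk+N=0$, and $M^2=0$ forces $s=0$ and $k=0$, hence $N=0$, contradicting $N\in\F^{\times}$. Either way we reach the desired contradiction.

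The step I expect to be the main obstacle is pinning down a strong enough invariant. The reduced trace by itself only yields $3\,t(q)=0$: vacuous in characteristic $3$, and otherwise merely $t(q)=0$, which is not a contradiction; so one must instead exploit the much stronger rigidity carried by $A^2=sA-NI_3$ together with the fact that one summand is a full Jordan block. The two delicate points are (i) verifying that the Jordan‑basis freedom genuinely allows one to zero out a column of the matrix of $N_2$—with the inevitable split according to whether $\ker N_2\subseteq\operatorname{Im}N_1$—and (ii) checking, in all characteristics (including $2$ and $3$), that the entry relations imposed by $B^2=sB-NI_3$ are rigid enough that nilpotency of $N_2$ forces $X^2-sX+N$ to have a root in $\F$, or $N(q)$ to vanish. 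This is also exactly where dimension $3$ enters: it is what guarantees that one summand is a single $3\times3$ Jordan block, and such blocks are rigid enough to close the argument—which is why no comparable obstruction appears for $2\times2$ (nor, for that matter, for larger) matrices.
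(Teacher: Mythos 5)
Your proof is correct; I checked the entrywise computations and the residual-freedom argument over an arbitrary division ring, including the characteristic $2$ and $3$ cases, and they all hold. It is, however, a genuinely different route from the paper's. Both arguments decompose $A=N_1+N_2$, note $A$ is invertible, and work in a Jordan basis for one of the nilpotent summands, but from there they diverge. The paper gets a full Jordan chain $(x,n_1(x),n_1^2(x))$ immediately from the observation that $n_1,n_2$ can share no nontrivial invariant subspace (this uses invertibility and the relation $u^2=u\lambda+\id_V\mu$), writes $A$ in the resulting basis as a block-triangular matrix with corner entry $\gamma$, recognizes $\gamma$ as an eigenvalue via Lemma~\ref{lemma:triangulareigenvalues}, and finishes by applying Proposition~\ref{prop:specialtypeII} to a $2\times2$ sub-block, getting $\lambda+\gamma=0$ hence $\gamma\in\F$, a contradiction. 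You instead establish by a kernel-dimension count that one summand is a single Jordan block $J$, use the residual freedom in a Jordan basis (conjugation by Toeplitz matrices commuting with $J$) to zero out a column of $N_2$, and then expand $B^2=sB-NI_3$ entrywise so that nilpotency of $B-J$ forces either a root of $X^2-sX+N$ in $\F$ or $N(q)=0$. Your route is fully self-contained and avoids the special-type-II classification (Prop.~\ref{prop:specialtypeII}) and Lemma~\ref{lemma:triangulareigenvalues} entirely, which is a real gain in elementarity, at the cost of a longer matrix calculation. One small redundancy worth noting: your Case~II is in fact vacuous. If $\ker N_2\subseteq\im N_1$ and $x\in\ker N_2\setminus\{0\}$, then $x\in\im N_1$ gives $N_1^2x=0$, so $W:=\Vect(x,N_1x)$ is $N_1$-invariant; moreover $Ax=N_1x$ and $A(N_1x)=A^2x=sN_1x-Nx\in W$, so $W$ is $A$-invariant and hence $N_2$-invariant. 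The maps induced by $N_1,N_2$ on the $1$-dimensional quotient $V/W$ are then nilpotent hence zero, so $A$ maps $V$ into $W$, contradicting invertibility — this is precisely the paper's ``no common invariant subspace'' observation. Your entrywise computation in Case~II also reaches a contradiction, so this is redundancy rather than error, but it could be deleted.
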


Again, if we take $q$ as a pure quaternion outside of $\F$ (or even an arbitrary noncentral quaternion if $\car(\F) = 3$),
then $\Diag(q,q,q)$ has reduced trace zero. It is obvious that $\Diag(q,q,q)$ is of neither special type I nor special type II
(indeed, in any one of those cases the matrix under consideration has an eigenvalue in $\F$).

More generally, we say that an endomorphism $u$ of a $\calQ$-vector space $V$ is of \textbf{special type III} whenever $\dim V=3$
and $u$ is unispectral, diagonalisable and nonzero.

We are finally ready to state our main theorem:

\begin{theo}[Classification of sums of two nilpotent endomorphisms]\label{theo:n>=3}
Let $\calQ$ be a quaternion division algebra over a field $\F$,
and $u$ be an endomorphism of a $\calQ$-vector space with finite dimension $n \geq 3$.
Assume that $u$ is of neither one of special types I, II and III.
Then the following conditions are equivalent:
\begin{enumerate}[(i)]
\item $\tr(u)=0$;
\item $u$ is represented by a matrix with diagonal zero;
\item $u$ is the sum of two nilpotent endomorphisms.
\end{enumerate}
\end{theo}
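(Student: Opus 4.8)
The implications (iii) $\Rightarrow$ (i) and (ii) $\Rightarrow$ (iii) are the easy directions available over any division ring: the reduced trace of a nilpotent endomorphism vanishes, and a matrix with zero diagonal splits as the sum of its strictly upper-triangular and strictly lower-triangular parts. So the entire content is the implication (i) $\Rightarrow$ (ii): assuming $\tr(u)=0$ and that $u$ is not of special types I, II or III, I must produce a basis of $V$ in which the matrix of $u$ has zero diagonal. The plan is to reduce to a "generic position" statement about eigenvectors and then run an induction on $n=\dim_\calQ V$, with Proposition~\ref{prop:specialtypeII} and Proposition~\ref{prop:specialtypeIII} handling the base cases.

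First I would establish a \emph{splitting lemma}: under the hypotheses, $V$ admits a decomposition $V=L\oplus H$ with $\dim L=1$, $\dim H=n-1$, both $u$-invariant \emph{or} — more usefully — a vector $x$ such that $u(x)$ has zero component along $x$ in a suitable complement (equivalently, a vector $x$ with $x$ not on the "diagonal" of $u$, i.e. such that in the decomposition $V=xD\oplus W$ for some complement $W$, the $xD$-component of $u(x)$ is zero). The mechanism for finding such $x$ is to look for a vector $x$ that is \emph{not} an eigenvector of $u$ (if $u$ has a non-eigenvector at all, a small perturbation argument places it so that the relevant coordinate is zero); the obstruction to this is precisely that $u$ is diagonalisable in a strong sense, which, combined with $\tr(u)=0$ and the exclusion of special types, forces the configuration handled directly. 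Concretely, I would split into cases according to the spectral structure of $u$: (a) $u$ has at least two distinct conjugacy classes of eigenvalues, in which case a representative eigenvector for each class gives a $u$-invariant splitting $V=V_1\oplus V_2$ with $\tr(u|_{V_1})$ and $\tr(u|_{V_2})$ adding to zero, allowing a descent after adjusting one summand by a commutator (so that each block has reduced trace zero, using that every pure quaternion is a commutator, Remark~\ref{rem:commutator}); (b) $u$ is unispectral but not diagonalisable, so it has a proper invariant subspace on which it is nonscalar, again enabling a block reduction; (c) $u$ is unispectral and diagonalisable, hence represented by $\Diag(q,\dots,q)$ for a single $q$ — here $\tr(u)=0$ means $n\cdot t(q)=0$ in $\F$, and if $n\ne 3$ I need to show $\Diag(q,\dots,q)$ \emph{is} a sum of two nilpotents (for $n\ge 4$ this should follow by a direct $4\times 4$ construction, since such a matrix is conjugate inside $\Mat_4(\calQ)$ to one with zero diagonal by an explicit trick, then pad; for $n=3$ this is exactly special type III and is excluded by hypothesis, while $n\le 2$ is outside the theorem's range).

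The inductive step in cases (a) and (b) requires care because, after peeling off an invariant line $L=xD$ with eigenvalue $q$, the complementary block $u'$ on $H$ has $\tr(u')=-\overline q$-coset, which need not be zero; I would fix this by first \emph{modifying the representing matrix} within its similarity class — transferring a commutator between the $(1,1)$ entry and a diagonal entry of the $H$-block — so that the $(1,1)$ entry becomes zero outright and simultaneously the trace of the remaining block becomes zero, then applying the induction hypothesis to that block, after checking it is not of special type I, II or III (which it cannot be generically, and in the finitely many exceptional sub-configurations one reshuffles the chosen line). The main obstacle, and the step I expect to be delicate, is exactly this bookkeeping: ensuring at each stage that the smaller block falls under the induction hypothesis rather than into one of the three forbidden special types, since a rank-one perturbation of a scalar (special type II) or a $3$-dimensional unispectral diagonalisable block (special type III) can easily appear after a careless reduction. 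Handling the boundary case $n=4$ descending to a special-type-III $3$-block, and conversely a special-type-II $(n-1)$-block, will need a bespoke argument — most likely a direct normal-form computation in $\Mat_4(\calQ)$ and $\Mat_3(\calQ)$ showing that although the \emph{block} is special, the \emph{whole} matrix is not, so one chooses a different splitting line to avoid the pathology.
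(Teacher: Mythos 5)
Your identification of (i) $\Rightarrow$ (ii) as the only content, and of Propositions~\ref{prop:specialtypeII} and~\ref{prop:specialtypeIII} as the source of the exclusions, matches the paper. However your route is genuinely different from the paper's, and as sketched it has at least one real gap.

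\textbf{Where the approaches diverge.} You propose a spectral case analysis: (a) at least two conjugacy classes of eigenvalues, (b) unispectral non-diagonalisable, (c) unispectral diagonalisable. The paper does nothing of the kind. For the base case $n=3$ it proves a ``local linear dependence lemma'' (Lemma~\ref{lemma:3LLD}): if $x,u(x),u^2(x)$ are dependent for \emph{every} $x$ then $u$ is of special type I, II or III. So under the theorem's hypotheses one gets a cyclic vector outright, writes $u$ in companion form, and finishes with Lemma~\ref{lemma:completionn=2} and the $2$-dimensional Theorem~\ref{theo:n=2}. For $n\geq 4$ the paper takes a vector $x$ with $u(x)\notin x\calQ$, and rather than trying to pick a \emph{good} complementary block, it \emph{parametrizes} the entire family of complementary $(n-1)\times(n-1)$ blocks obtainable by replacing $e_j$ with $e_j+e_1q_j$. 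If $u$ were \emph{not} representable with zero diagonal, then by Lemma~\ref{lemma:extension} every member of that family must be of special type I--III, and that universal constraint (analysed via Lemma~\ref{lemma:compatlemma1}, Lemma~\ref{lemma:translatetypeIII} and Lemma~\ref{lemma:sumoftworank1}) forces $u$ to be of special type II, a contradiction. This sidesteps exactly the bookkeeping you flag as ``delicate'' and resolve by ``choosing a different splitting line'' --- the paper never needs one good line, only the collective behaviour of all of them.

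\textbf{A genuine gap in case (c).} You claim that for $n\geq 4$, a unispectral diagonalisable $u$ (represented by $\Diag(q,\dots,q)$ with $q\notin\F$ pure) is a sum of two nilpotents by ``a direct $4\times 4$ construction\dots then pad.'' Padding works for even $n$: split $\Diag(q,\dots,q)$ into $2\times 2$ blocks $\Diag(q,q)$, each handled by Theorem~\ref{theo:n=2}(iii). But for odd $n\geq 5$ any block-diagonal split $2+\dots+2+3$ leaves a $\Diag(q,q,q)$ factor, which is precisely special type III and is \emph{not} a sum of two nilpotents by Proposition~\ref{prop:specialtypeIII}; and no conjugation can change that $3$-block's isomorphism type. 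So the padding plan fails at $n=5,7,\dots$, and a genuinely global argument is needed there. (The paper's method handles this automatically because $\Diag(q,\dots,q)$ with $n\geq 4$ certainly admits $x,u(x)$ independent, and the varying-first-row trick then applies without ever isolating an ``odd'' sub-block.) A secondary soft spot is case (a): eigenvectors in two distinct conjugacy classes do not by themselves give a $u$-invariant direct-sum decomposition $V=V_1\oplus V_2$; one gets an invariant line and a \emph{quotient}, not a complement, and the trace-transfer and ``not-of-special-type'' checks you defer are the entire difficulty.
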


Combining this result with Propositions \ref{prop:specialtypeII} and \ref{prop:specialtypeIII} yields the full characterization of the sums of two nilpotent
matrices of $\Mat_n(\calQ)$ for $n \geq 3$.
The case where $n=2$ is slightly different, with a less enlightening characterization: we wait until Section \ref{section:n=2}
to give the precise result on this case.

One infers the following corollary for which, strangely enough, we found ourselves unable to find a direct proof of the implication (ii) $\Rightarrow$ (i):

\begin{cor}
Let $\calQ$ be a quaternion division algebra over a field.
Let $V$ be a $\calQ$-vector space with finite dimension, and let $u \in \End_\calQ(V)$.
Then the following conditions are equivalent:
\begin{enumerate}[(i)]
\item $u$ is represented by a matrix with diagonal zero;
\item $u$ is the sum of two nilpotent endomorphisms.
\end{enumerate}
\end{cor}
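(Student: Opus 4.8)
The plan is to deduce the corollary from Theorem \ref{theo:n>=3} together with Propositions \ref{prop:specialtypeII} and \ref{prop:specialtypeIII}, after reducing to the dimensions that those results do not directly cover. Implication (i) $\Rightarrow$ (ii) is formally part of each of those statements (it is the equivalence ``(ii)'' there), so the only real content is (i) $\Rightarrow$ (ii) in the genuinely exceptional cases and the small dimensions $n\le 2$; note that (ii) $\Rightarrow$ (i) in the corollary is \emph{not} asked for, only the equivalence of ``diagonal zero'' with ``sum of two nilpotents''. Wait --- re-reading, the corollary's (i) is ``represented by a matrix with diagonal zero'' and (ii) is ``sum of two nilpotents'', so I must prove both directions; the direction (ii) $\Rightarrow$ (i) is the one the authors flag as having no direct proof, so it will have to come out of the case analysis as well.

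First I would dispose of the trivial dimensions. For $n=1$ a matrix has diagonal zero iff it is the zero matrix iff it is a sum of two nilpotents, so both implications hold. For $n=0$ everything is vacuous. Next I would handle $n=2$ by invoking the forthcoming classification of Section \ref{section:n=2} (which I am allowed to assume); there one reads off directly that the $2$-by-$2$ matrices with a zero-diagonal representative coincide with the sums of two nilpotents. So from now on assume $n\ge 3$ and fix $u\in\End_\calQ(V)$ with $\dim_\calQ V=n$.

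Now split according to the special types. If $u$ is of special type I, i.e.\ $u=\id_V\lambda$ with $\lambda\in\F\setminus\{0\}$, then $u$ is central and invertible, hence not a sum of two nilpotents (the sum of a central invertible and a nilpotent is invertible), and also not representable with diagonal zero (any representing matrix is $\lambda I_n$, whose diagonal entries are the nonzero central $\lambda$, and diagonal-zero is a similarity-invariant property here since $u$ is scalar); so (i) and (ii) are both false and the equivalence holds vacuously. If $u$ is of special type II, Proposition \ref{prop:specialtypeII} gives exactly the equivalence of ``diagonal zero'' and ``sum of two nilpotents'' (via condition $\str(u)=0$), so we are done in that case. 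If $u$ is of special type III, then $n=3$ and $u$ is unispectral diagonalisable nonzero; by Proposition \ref{prop:specialtypeIII} it is not a sum of two nilpotents, so for the equivalence I must check that it is also not representable with diagonal zero --- this is the one genuinely new verification. The argument: a special-type-III $u$ is, up to conjugation, $\Diag(q,q,q)$ for some $q\in\calQ\setminus\{0\}$; if some representing matrix had diagonal zero then (ii) $\Rightarrow$ (iii) of Theorem \ref{theo:basic}'s easy direction --- splitting a zero-diagonal matrix into strictly upper- and strictly lower-triangular parts --- would exhibit $u$ as a sum of two nilpotents, contradicting Proposition \ref{prop:specialtypeIII}. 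Finally, if $u$ is of none of the three special types, Theorem \ref{theo:n>=3} delivers the equivalence of (ii) and (iii) directly. In every case the two conditions of the corollary have been shown equivalent, so the proof is complete.

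The main obstacle I anticipate is precisely the special-type-III sub-case: one must be careful that ``$u$ is represented by a matrix with diagonal zero'' is handled correctly. The clean move is to observe that splitting a zero-diagonal matrix as strictly-upper $+$ strictly-lower is always available over any ring, so ``diagonal zero'' always implies ``sum of two nilpotents'' with no hypothesis at all; this gives (i) $\Rightarrow$ (ii) of the corollary uniformly and instantly, and reduces everything to proving (ii) $\Rightarrow$ (i), for which the case analysis above --- type I impossible, type II by Proposition \ref{prop:specialtypeII}, type III impossible by Proposition \ref{prop:specialtypeIII}, generic by Theorem \ref{theo:n>=3}, and $n\le 2$ by the $n=2$ classification --- is exhaustive.
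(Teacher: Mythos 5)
Your proof is correct and follows exactly the route the paper intends: the easy implication (i) $\Rightarrow$ (ii) via the strictly-upper/strictly-lower split, then (ii) $\Rightarrow$ (i) by exhausting the cases $n\le 2$ (Theorem \ref{theo:n=2}), special type I (impossible), special type II (Proposition \ref{prop:specialtypeII}), special type III (impossible by Proposition \ref{prop:specialtypeIII}), and the generic $n\ge 3$ case (Theorem \ref{theo:n>=3}). The paper gives no separate proof of this corollary but states that it is inferred from the classification, which is precisely what you do.
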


\subsection{Organization of the article}

The remainder of the article is organized as follows.
In Section \ref{section:extendedreview}, we dive deeper into the spectral theory over division rings.
The $2$-dimensional case is then completely solved in Section \ref{section:n=2}.
In Section \ref{section:specialtypeII}, we analyze the endomorphisms of special type II, and as a result
we determine those that are the sum of two nilpotent endomorphisms (thereby proving Proposition \ref{prop:specialtypeII}).
In Section \ref{section:specialtypeIII}, we prove that no endomorphism of special type III is the sum of two nilpotent endomorphisms.

The last two sections are devoted to the proof of Theorem \ref{theo:n>=3}, which is the most difficult and intricate part of the article.
The proof is by induction, and it is initialized at $n=3$.
The initialization is not easy, and we gather the basic results needed for this in Section \ref{section:n=3}.
For the inductive step, the main idea is an adaptation of the classical inductive proof of Fillmore's theorem \cite{Fillmore}, where the difficulty is to avoid encountering
endomorphisms of special type when appealing to the induction hypothesis.

\section{Additional results of spectral theory over division rings}\label{section:extendedreview}

We resume our review of spectral theory over division rings.
Let $D$ be a division ring with center $C$, and $u$ be an endomorphism of a finite-dimensional $D$-vector space $V$.

It is important to recognize the eigenvalues of $u$ from a triangularization basis (when it exists).
So, let $(e_1,\dots,e_n)$ be a basis of $V$ in which $u$ is represented by some upper-triangular matrix $T$.
Then one proves that the spectrum of $u$ is included in the union of the conjugacy classes of the diagonal
entries $t_1,\dots,t_n$ of $T$: to see this, we take an arbitrary eigenvector $x$ of $u$, with eigenvalue denoted by $q$, we decompose it as $x=\sum_{i=1}^n e_i x_i$;
take the greatest $k \in \lcro 1,n\rcro$ such that $x_k \neq 0$; finally we infer that $t_k x_k=x_k q$, to the effect that $q \simeq t_k$.
Surprisingly, some diagonal entries of $T$ might not be eigenvalues of $u$, but fortunately in the case
of centrally finite division rings this important result holds.
Recall first that the division ring $D$ is \textbf{centrally finite} if it has finite degree over its center.

\begin{lemma}\label{lemma:triangulareigenvalues}
Let $D$ be a centrally finite division ring.
Let $M \in \Mat_n(D)$ be a matrix of the form $M=\begin{bmatrix}
S & X_0 \\
[0]_{1 \times (n-1)} & t
\end{bmatrix}$
with $S \in \Mat_{n-1}(D)$, $X_0 \in D^{n-1}$ and $t \in D$.

Then $t$ is an eigenvalue of $M$.
\end{lemma}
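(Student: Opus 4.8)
The plan is to argue by induction on $n$, reducing to the case where the block $S$ itself is upper-triangular, and then to exploit centrally finiteness to rule out the pathology that a diagonal entry of a triangular matrix need not be an eigenvalue. First I would recall (or reprove in a line) that, over any division ring, a matrix with a zero last row is never injective as an operator $X \mapsto MX$ when... no, that is false in general; rather, the correct starting observation is that $M - I_n t$ has the same block-triangular shape with bottom-right entry $0$, so its last column of the action can be analysed directly. Concretely, set $N := M - I_n t = \begin{bmatrix} S - I_{n-1} t & X_0 \\ [0]_{1\times(n-1)} & 0\end{bmatrix}$; I want to show $N$ is non-injective as an endomorphism of the right $D$-vector space $D^n$, which by condition (i) in the definition of eigenvalue is exactly the statement that $t$ is an eigenvalue of $M$.

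The key case is when $S - I_{n-1} t$ is invertible, since otherwise we immediately produce a nonzero $Y \in D^{n-1}$ with $(S - I_{n-1}t)Y = 0$ and then $\begin{bmatrix} Y \\ 0 \end{bmatrix}$ lies in $\Ker N$. So assume $S - I_{n-1}t \in \GL_{n-1}(D)$. Then I would look for a vector of the form $\begin{bmatrix} Z \\ c \end{bmatrix}$ with $c \in D$, $c \neq 0$, killed by $N$; the system reduces to $(S - I_{n-1}t)Z + X_0 c = 0$, i.e. $Z = -(S - I_{n-1}t)^{-1} X_0 c$, which is solvable for any choice of $c$. Hence $\begin{bmatrix} -(S-I_{n-1}t)^{-1}X_0 \\ 1 \end{bmatrix}$ is a nonzero element of $\Ker N$ and we are done --- and in fact this shows $t$ is always an eigenvalue with no hypothesis on $D$ at all.

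Let me reconsider: the subtlety the text flags (''some diagonal entries of $T$ might not be eigenvalues'') concerns diagonal entries other than the last one, and it is precisely the bottom entry that is always an eigenvalue, with the argument above working verbatim over an arbitrary division ring. So the centrally finite hypothesis in the lemma statement is in fact not needed for this particular conclusion; I would simply carry out the two-case split on invertibility of $S - I_{n-1}t$ as above, noting in each case an explicit nonzero vector in $\Ker(M - I_n t)$. The only point requiring a word of care is that $X \mapsto MX$ is genuinely $D$-linear on the \emph{right} vector space $D^n$ (so that the span of a kernel vector is a line fixed by $M$ with eigenvalue $t$), and that scalar matrices act on the correct side --- here $I_n t$ means the matrix $\Diag(t,\dots,t)$, whose associated operator sends $X \mapsto $ the entrywise left-multiplication, consistent with the eigenvalue equation $u(x) = xt$ once one is careful that $t$ need not be central. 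I expect no real obstacle: the argument is a routine kernel computation, and the main thing to get right is the bookkeeping of left/right actions in the noncommutative setting.
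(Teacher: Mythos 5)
Your argument contains a genuine error that invalidates it, and the conclusion you draw from it (that the centrally finite hypothesis is superfluous) is false.

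The flaw is the identification of ``$t$ is an eigenvalue of $M$'' with ``$\Ker(M-I_n t) \neq 0$''. Since $I_n t = \Diag(t,\dots,t)$, the matrix--vector product is $(I_n t)X = tX$ (entrywise \emph{left} multiplication), so $(M-I_n t)X = 0$ amounts to $MX = tX$. But the eigenvalue equation in the sense of the paper is $MX = Xt$ (right scalar action, consistent with the right $D$-vector space structure and with the definition $u(x)=xq$). These coincide only when $t$ is central; otherwise a solution of $MX=tX$ need not yield any eigenvector of $M$ at all. Both halves of your case split inherit this error: in the first case the vector $\begin{bmatrix} Y \\ 0\end{bmatrix}$ with $(S-I_{n-1}t)Y=0$ witnesses $SY=tY$, not $SY=Yt$, and in the second the explicit $Z=-(S-I_{n-1}t)^{-1}X_0$ solves $SZ-tZ=-X_0$ rather than the needed $SZ-Zt=-X_0$. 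You flag the left/right issue in your final paragraph, but then assert it is ``consistent with the eigenvalue equation $u(x)=xt$,'' which is exactly the point at which the argument breaks.

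The paper's proof is structured around the map you should be considering, namely $f\colon X\in D^{n-1}\mapsto SX-Xt$, and the case split is on whether $t$ is an eigenvalue of $S$, i.e.\ whether $f$ is injective. This map is \emph{not} given by left multiplication by a single matrix; it is only $C$-linear, where $C$ is the center of $D$. When $f$ is injective one cannot simply invert a matrix. Instead the paper observes that $D^{n-1}$ is finite-dimensional over $C$ \emph{because $D$ is centrally finite}, so the injective $C$-linear map $f$ is automatically surjective, which produces $X'$ with $SX'-X't=-X_0$ and hence the eigenvector $\begin{bmatrix}X'\\1\end{bmatrix}$. This is precisely where the centrally finite hypothesis enters, and it is genuinely needed: without it, injectivity of $f$ gives no surjectivity, and the lemma can fail. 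So your concluding remark that the hypothesis ``is in fact not needed for this particular conclusion'' is incorrect.
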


\begin{proof}
Assume first that $t$ is an eigenvalue of $S$. Then there is a nonzero vector $Y \in D^{n-1}$ such that $SY=Yt$.
Set $\widetilde{Y}:=\begin{bmatrix}
Y \\
0
\end{bmatrix}$ and note that $\widetilde{Y} \neq 0$ and $M \widetilde{Y}=\widetilde{Y}t$. Hence $t$ is an eigenvalue of $M$.

Assume now that $t$ is not an eigenvalue of $S$.
Then the endomorphism $X \in D^{n-1} \mapsto SX \in D^{n-1}$ does not have $t$ in its spectrum, i.e.,
$f : X \in D^{n-1} \mapsto SX-Xt$ is injective.

Denote by $C$ the center of $D$.
Here $D^{n-1}$ has finite dimension as a $C$-vector space, and hence
$f$, which is $C$-linear, is also surjective, yielding a vector $X' \in D^{n-1}$ such that $SX'-X' t=-X_0$.
Then we note that $\widetilde{X}:=\begin{bmatrix}
X' \\
1
\end{bmatrix}$ satisfies $M \widetilde{X}=\widetilde{X} t$. Hence $t$ is an eigenvalue of $M$.
\end{proof}

By induction on the size of the matrix, we deduce:

\begin{cor}\label{cor:triangulareigenvalues}
Let $D$ be a centrally finite division ring.
Let $T \in \Mat_n(D)$ be upper-triangular. Then every diagonal entry of $T$ is an eigenvalue of $T$.
\end{cor}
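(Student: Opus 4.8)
The plan is to argue by induction on the size $n$, using Lemma \ref{lemma:triangulareigenvalues} as the engine. For $n=1$ the matrix is $T=[t_1]$, and $t_1$ is plainly an eigenvalue (the vector $1 \in D$ is an eigenvector), so the base case is immediate.

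For the inductive step, assume $n \geq 2$ and that the statement holds for all upper-triangular matrices of size $n-1$. Write
$$T=\begin{bmatrix} S & X_0 \\ [0]_{1 \times (n-1)} & t_n \end{bmatrix}$$
where $S \in \Mat_{n-1}(D)$ is the leading principal $(n-1)$-by-$(n-1)$ submatrix; it is again upper-triangular, and its diagonal entries are exactly the first $n-1$ diagonal entries $t_1,\dots,t_{n-1}$ of $T$. Applying Lemma \ref{lemma:triangulareigenvalues} to $T$ with this very block decomposition shows that the last diagonal entry $t_n$ is an eigenvalue of $T$.

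It remains to handle $t_i$ for $i \in \lcro 1,n-1\rcro$. By the induction hypothesis applied to $S$, there is a nonzero vector $Y \in D^{n-1}$ with $SY=Yt_i$. Then $\widetilde{Y}:=\begin{bmatrix} Y \\ 0 \end{bmatrix} \in D^n$ is nonzero and satisfies $T\widetilde{Y}=\begin{bmatrix} SY \\ 0 \end{bmatrix}=\widetilde{Y}t_i$ (the bottom coordinate being $[0]_{1\times(n-1)}Y+t_n\cdot 0=0$), so $t_i$ is an eigenvalue of $T$. This exhausts all diagonal entries and closes the induction.

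I do not anticipate any real obstacle here; the one point worth flagging is that Lemma \ref{lemma:triangulareigenvalues} is invoked only for the bottom-right entry, whereas every other diagonal entry already lies on the diagonal of a strictly smaller upper-triangular block, and an eigenvector of that block lifts trivially — by padding with a zero — to an eigenvector of $T$ with the same eigenvalue. The centrally finite hypothesis is used only through Lemma \ref{lemma:triangulareigenvalues}.
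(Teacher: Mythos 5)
Your proof is correct and matches the paper's approach: the paper simply states that the corollary follows by induction on the size of the matrix using Lemma \ref{lemma:triangulareigenvalues}, which is exactly what you spell out.
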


The reader must beware that many habits of the commutative case fall apart in the noncommutative one.
For example, a matrix of the form $\begin{bmatrix}
a & 1 \\
0 & a
\end{bmatrix}$ is never diagonalisable in the commutative case, but it might be diagonalisable in the noncommutative one!
Indeed, we have the following result:

\begin{lemma}\label{lemma:diago2by2}
Let $a$ and $b$ belong to a division ring $D$.
The matrix $M:=\begin{bmatrix}
a & b \\
0 & a
\end{bmatrix}$ of $\Mat_2(D)$ is diagonalisable if and only if $b=[a,x]$ for some $x \in D$.
\end{lemma}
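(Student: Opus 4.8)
The plan is to argue directly with eigenvectors in the column space $D^2$, using the convention from Section~\ref{section:reviewspectral} that an eigenvector of $M$ for the eigenvalue $q$ is a nonzero $X \in D^2$ with $MX=Xq$; in particular I will not need to invoke Corollary~\ref{cor:triangulareigenvalues} (which anyway requires $D$ to be centrally finite).

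First I would prove the implication ``$b=[a,x]$ for some $x \Rightarrow M$ diagonalisable''. Assuming $b=ax-xa$, I claim that $\begin{bmatrix} 1 \\ 0 \end{bmatrix}$ and $\begin{bmatrix} -x \\ 1 \end{bmatrix}$ are both eigenvectors of $M$ for the eigenvalue $a$: the first is immediate, and for the second one computes $M\begin{bmatrix} -x \\ 1 \end{bmatrix}=\begin{bmatrix} -ax+b \\ a \end{bmatrix}=\begin{bmatrix} -xa \\ a \end{bmatrix}=\begin{bmatrix} -x \\ 1 \end{bmatrix}a$. These two columns form the invertible matrix $P:=\begin{bmatrix} 1 & -x \\ 0 & 1 \end{bmatrix}$, so $D^2$ has a basis made of eigenvectors of $M$; equivalently, $P^{-1}MP=\Diag(a,a)$, and $M$ is diagonalisable.

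Then I would handle the converse. Assume $M$ is diagonalisable, so $D^2$ admits a basis consisting of eigenvectors of $M$. Since the $1$-dimensional subspace $\begin{bmatrix} 1 \\ 0 \end{bmatrix}D$ cannot contain a basis of $D^2$, some basis eigenvector $\begin{bmatrix} \alpha \\ \beta \end{bmatrix}$ has $\beta \neq 0$. Writing $q$ for its eigenvalue and reading off the two rows of $M\begin{bmatrix} \alpha \\ \beta \end{bmatrix}=\begin{bmatrix} \alpha \\ \beta \end{bmatrix}q$ gives $a\beta=\beta q$ and $a\alpha+b\beta=\alpha q$. The first equation yields $q=\beta^{-1}a\beta$; substituting into the second and multiplying on the right by $\beta^{-1}$ gives $b=\alpha\beta^{-1}a-a\alpha\beta^{-1}=[\alpha\beta^{-1},a]=[a,-\alpha\beta^{-1}]$, a Lie commutator with $a$, as wanted.

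There is no genuinely hard step here; the points that require care are keeping the left/right conventions straight (eigenvalues act on the right, matrices on the left of column vectors), and, in the converse direction, noticing that the division-ring hypothesis together with $\dim=2$ forces one diagonalising eigenvector to have an invertible bottom coordinate — this is precisely what lets us solve for $q$ and extract the commutator.
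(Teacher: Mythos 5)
Your proof is correct and follows essentially the same route as the paper: the forward direction conjugates $M$ to $\Diag(a,a)$ by the unipotent upper-triangular matrix $\begin{bmatrix} 1 & -x \\ 0 & 1 \end{bmatrix}$ (the paper writes it as $TMT^{-1}$ with $T=\begin{bmatrix}1 & c \\ 0 & 1\end{bmatrix}$, which is the same conjugation), and the converse finds an eigenvector with nonzero bottom entry and reads off the two rows of $MX=Xq$. The only cosmetic difference is that the paper first normalizes that eigenvector to have bottom entry $1$ before extracting $b=[a,-x_1]$, whereas you keep a general $\beta$ and divide at the end; both observations (including that central finiteness is not needed here) match the paper.
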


\begin{proof}
If $b=[a,c]$ for some $c \in D$ then we observe that $TMT^{-1}=\Diag(a,a)$
for $T:=\begin{bmatrix}
1 & c \\
0 & 1
\end{bmatrix}$.

Conversely, assume that $M$ is diagonalisable. Then,
it has an eigenvector $X=\begin{bmatrix}
x_1 \\
x_2
\end{bmatrix}$ such that $x_2 \neq 0$. By considering $X x_2^{-1}$ we may directly assume that $x_2=1$.
Then the eigenvalue that corresponds to $X$ is $a$ (judging from the second entry in $MX$), and then we get $ax_1+b=x_1 a$ from the first entry,
to the effect that $b=[a,-x_1]$.
\end{proof}

In particular, whenever we have $a \in D \setminus C$, we can take $x \in D$ such that $[a,x] \neq 0$, and observe that the matrix
$\begin{bmatrix}
a & [a,x] \\
0 & a
\end{bmatrix}$ is diagonalisable although $[a,x] \neq 0$.

We finish with an important point on diagonalisable endomorphisms and the properties of invariant subspaces.
We give the proof because many standard proofs in the commutative case use tools that are not available
in the noncommutative one (such as the annihilating polynomials and B\'ezout's Theorem).

\begin{prop}\label{prop:induceddiagonalisable}
Let $u$ be a diagonalisable endomorphism of a finite-dimensional vector space $V$, and $W$ be an invariant linear subspace of $V$.
Then the respective endomorphisms of $V/W$ and $W$ induced by $u$ are diagonalisable (with spectrum included in the one of $u$).
\end{prop}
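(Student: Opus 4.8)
The plan is to first handle the quotient $V/W$, and then deduce the case of $W$ by a duality-free argument. For the quotient, fix a basis $(e_1,\dots,e_n)$ of $V$ consisting of eigenvectors of $u$, say $u(e_i)=e_i q_i$, and let $\pi : V \to V/W$ be the canonical projection. The images $\pi(e_1),\dots,\pi(e_n)$ span $V/W$, each is either zero or an eigenvector of the induced endomorphism $\overline u$ (with the same eigenvalue $q_i$, since $\overline u(\pi(e_i)) = \pi(u(e_i)) = \pi(e_i)q_i$), and from any spanning family of eigenvectors one can extract a basis of eigenvectors by the standard argument that eigenvectors attached to pairwise nonconjugate eigenvalues are linearly independent (reviewed in Section \ref{section:reviewspectral}). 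Hence $\overline u$ is diagonalisable, and its spectrum, being the union of the conjugacy classes of those $q_i$ that survive, is contained in the spectrum of $u$.

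For $W$ itself, the obstacle is that we cannot dualize directly (transposes behave badly over noncommutative rings, swapping left and right module structures). Instead I would argue by induction on $\dim V$, peeling off one eigenline at a time. If $W=V$ or $W=0$ there is nothing to prove. Otherwise pick any eigenvector $e$ of $u$, with $u(e)=eq$, lying in $W$ if possible; concretely, among the $e_i$ from the eigenbasis above, choose one. Two cases: either some $e_i$ lies in $W$, or none does. In the first case, set $L := e_i D \subseteq W$; then $u$ induces a diagonalisable endomorphism on $V/L$ by the quotient case just proved, $W/L$ is an invariant subspace of $V/L$ of smaller ambient dimension, and the induction hypothesis gives that $u$ induces a diagonalisable endomorphism on $(W/L)$; combining with diagonalisability of $u$ on $L$ (it is a scalar there) and the fact that $W/L$ lifts compatibly, one reassembles a basis of eigenvectors of $W$. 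In the second case — no $e_i$ lies in $W$ — I claim $W=0$: indeed $\pi|_W : W \to V/W$ would then be injective on each of the lines $e_iD$... this needs care, so let me use a cleaner route for that subcase.

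The cleaner route, which I would actually adopt for the whole $W$-statement, is to avoid cases entirely by working with a single eigenvalue class at a time. Group the eigenbasis by conjugacy class of eigenvalue: $V = \bigoplus_{\alpha} V_\alpha$, where $V_\alpha$ is the span of those $e_i$ with $q_i$ in the class $\alpha$, and $V_\alpha$ is exactly the set of vectors $x$ with $u(x)=x q$ for some $q$ conjugate to a representative of $\alpha$, together with $0$ (using point (ii) of the diagonalisable-basis discussion in Section \ref{section:reviewspectral}). The decisive claim is that $W = \bigoplus_\alpha (W \cap V_\alpha)$: given $w \in W$, write $w = \sum_\alpha v_\alpha$ with $v_\alpha \in V_\alpha$; applying suitable polynomial-free projectors built from $u$ — or, more elementarily, using that the $v_\alpha$ are recovered from $w, u(w), u^2(w),\dots$ by a Vandermonde-type elimination over the distinct conjugacy classes, all of which stay in $W$ by invariance — one gets each $v_\alpha \in W$. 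The main obstacle is precisely making this ``Vandermonde'' elimination rigorous in the noncommutative setting, where eigenvalues in the same class need not be equal and the usual polynomial interpolation is unavailable; I expect the fix is to reduce to one class at a time by first showing $W \cap V_\alpha$ is what you get by intersecting, via the observation that an eigenvector of $u|_W$ has its eigenvalue in the spectrum of $u$ hence lies in a unique $V_\alpha$, and then counting dimensions. Once $W=\bigoplus_\alpha(W\cap V_\alpha)$ is established, within each $V_\alpha$ the endomorphism $u$ acts with all eigenvalues in one class, a situation equivalent (after a change of basis) to a diagonal matrix with equal diagonal entries, and a further short argument — or a direct appeal to Lemma \ref{lemma:diago2by2} and its higher-dimensional analogue — shows every invariant subspace of such an endomorphism is again spanned by eigenvectors. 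Assembling the pieces over all $\alpha$ yields a basis of $W$ consisting of eigenvectors of $u$, with eigenvalues among those of $u$, completing the proof.
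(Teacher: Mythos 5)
Your treatment of the quotient $V/W$ is correct and is essentially the paper's own argument. The problem is the $W$ half, where both of your proposed routes have real gaps that you yourself flag but do not close.

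The first route breaks at the case split. The claim ``if no $e_i$ from the chosen eigenbasis lies in $W$, then $W=0$'' is false: take $u$ scalar on $V=D^2$ with eigenbasis $(e_1,e_2)$ and $W$ the line through $e_1+e_2$. Every vector is an eigenvector, $W$ is invariant and nonzero, yet contains no $e_i$. Choosing a ``better'' eigenbasis can sometimes fix this, but then the argument must specify how the eigenbasis is chosen \emph{after} seeing $W$ — which is precisely the content of the missing lemma. The second route hinges on $W=\bigoplus_\alpha (W\cap V_\alpha)$, the analogue of the primary-decomposition step, and then on the fact that any invariant subspace of a unispectral diagonalisable endomorphism is spanned by eigenvectors. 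Neither is established. The ``Vandermonde elimination'' you gesture at can in fact be made to work over a division ring (iterate $x \mapsto u(x) - x\gamma$ for suitable $\gamma$, using that a nonzero element cannot conjugate one conjugacy class into a different one), but it is delicate, and the unispectral sub-claim is not immediate either: even after a change of basis to $\Diag(q,\dots,q)$ with $q$ noncentral, the map $x \mapsto u(x)-xq$ is only $\F$-linear, not $D$-linear, so you cannot just subtract off $xq$ and be done. None of this is carried out, and the paper's own proof deliberately avoids the whole circle of ideas — it explicitly notes that annihilating polynomials and B\'ezout are unavailable here.

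The paper's device is cleaner and you should compare: fix once and for all a basis $(f_1,\dots,f_n)$ of eigenvectors of $u$, take any basis of $W$, and extend it to a basis of $V$ by selecting the missing vectors $e_1,\dots,e_d$ from among the $f_j$ (the exchange lemma makes this possible). Then $W':=e_1D+\dots+e_dD$ is an invariant complement to $W$, because each $e_i$ is an eigenvector. Consequently $u_W$ is conjugate to the endomorphism that $u$ induces on $V/W'$, and the quotient case — which you proved correctly — finishes the job. This sidesteps the primary decomposition of $W$ entirely and is the step your proposal is missing.
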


\begin{proof}
The easier statement is the one on the endomorphism $\overline{u}^W$ of $V/W$ induced by $u$.
Indeed, for every eigenvector $x$ of $u$, with corresponding eigenvalue $q$, one sees that the coset $\overline{x}$ in $V/W$
is either zero or an eigenvector of $\overline{u}^W$ with corresponding eigenvalue $q$. Taking a basis of such vectors, we
obtain a spanning set of eigenvectors of $\overline{u}$ for the vector space $V/W$ (we remove the zero vector, of course)
with all eigenvalues in the spectrum of $u$, and from this family we can extract a basis of $V/W$. Hence $\overline{u}^W$
is diagonalisable with spectrum included in the one of $u$.

For $W$, we use the trick of finding an invariant direct summand $W'$, that is a linear subspace such that $V=W \oplus W'$ and $W'$ is invariant under $u$:
this is obtained by choosing an arbitrary basis of $W$ and extending it into a basis of $V$ in which the remaining vectors $e_1,\dots,e_d$ are chosen
among a basis of eigenvectors of $u$. Then $W':=e_1D+\cdots+e_d D$ clearly fulfills our needs. Then, we observe that the endomorphism $u_W$ of $W$
induced by $u$ is similar to the endomorphism of $V/W'$ induced by $u$. By the first step, the latter is diagonalisable, and hence so is $u_W$,
and of course all the eigenvalues of $u_W$ are eigenvalues of $u$.
\end{proof}

\section{The $2$-dimensional case}\label{section:n=2}

Here we deal with the characterization of sums of two nilpotent endomorphisms of $2$-dimensional spaces.
As we are about to see, our ``spectral" characterization (condition (iii) below) is quite complicated.
Before we prove it, we need a classical lemma which will be reused later:

\begin{lemma}\label{lemma:carachomotheties}
Let $V$ be a vector space with dimension at least $2$ over a division ring $D$, and let $u$ be an endomorphism of $V$.
Assume that every vector of $V$ is an eigenvector of $u$. Then $u=\id_V \lambda$ for some $\lambda$ in the center of $D$.
\end{lemma}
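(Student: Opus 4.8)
The plan is to exploit the additivity of $u$ together with linear independence. For each nonzero $x \in V$, write $u(x)=x\lambda_x$ with $\lambda_x \in D$; the goal is to show that the scalar $\lambda_x$ does not in fact depend on $x$, and that the common value is central.

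First I would handle pairs of linearly independent vectors. Given $x,y$ linearly independent, the vector $x+y$ is nonzero, hence an eigenvector, say $u(x+y)=(x+y)\mu$ for some $\mu \in D$. Expanding via additivity gives $x\lambda_x+y\lambda_y=x\mu+y\mu$, that is $x(\lambda_x-\mu)+y(\lambda_y-\mu)=0$, and linear independence then forces $\lambda_x=\mu=\lambda_y$. Thus any two linearly independent vectors of $V$ share the same eigenvalue.

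Next I would promote this to all nonzero vectors. Since $\dim_D V\geq 2$, fix linearly independent vectors $x_0,y_0$; by the previous step $\lambda_{x_0}=\lambda_{y_0}=:\lambda$. An arbitrary nonzero $v$ cannot be a $D$-multiple of both $x_0$ and $y_0$ (otherwise $x_0$ and $y_0$ would be dependent), so $v$ is linearly independent from at least one of them, whence $\lambda_v=\lambda$. Therefore $u(v)=v\lambda$ for every $v\in V$ (trivially also for $v=0$), i.e., $u$ is right multiplication by the single scalar $\lambda$.

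Finally, centrality of $\lambda$ comes for free from the $D$-linearity of $u$: for any $v\in V$ and $g\in D$, comparing $u(vg)=u(v)g=v\lambda g$ with $u(vg)=(vg)\lambda=v(g\lambda)$ and cancelling a nonzero $v$ (legitimate since $D$ is a division ring) yields $\lambda g=g\lambda$ for all $g\in D$. Hence $\lambda$ lies in the center $C$ of $D$ and $u=\id_V\lambda$, as claimed. I do not expect a genuine obstacle here; the only point requiring care is the noncommutative bookkeeping — keeping each scalar on the correct (right) side throughout — and the hypothesis $\dim V\geq 2$ is used exactly once, to guarantee the existence of a pair of linearly independent vectors from which the argument can start.
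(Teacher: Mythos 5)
Your proof is correct and follows essentially the same route as the paper's: first use additivity on a pair of linearly independent vectors to force equality of eigenvalues, extend to all nonzero vectors via $\dim V\geq 2$, and then use $D$-linearity (comparing $u(vg)$ computed two ways) to conclude that the common eigenvalue is central. The paper phrases the centrality step as the identity $q_{xg}=g^{-1}q_xg$ and the dependent-vector step via an auxiliary $z\notin xD$, but these are cosmetic differences from your version.
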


\begin{proof}
For every $x \in V \setminus \{0\}$, denote by $q_x \in D$ the unique scalar such that $u(x)=x q_x$.
Let $x,y$ be linearly independent vectors of $V$. Then by applying the property to $x$, $y$ and $x+y$ we find that
$q_x=q_{x+y}=q_y$ because $u$ is additive.

Let now $x,y$ be nonzero vectors of $V$ that are linearly dependent over $D$. Then we can find $z \in V \setminus xD$,
and we deduce from the first step that $q_y=q_z=q_x$. Hence $x \mapsto q_x$ is constant, and we denote its value by $\lambda$, which at this point belongs to $D$.
Finally, for all $g \in D^\times$ we compute that $\forall x \in V \setminus \{0\}, \; q_{x g}=g^{-1} q_x g$, and hence $\lambda$ belongs to the center of $D$.
\end{proof}

\begin{theo}\label{theo:n=2}
Let $\calQ$ be a quaternion division algebra over a field $\F$.
Let $V$ be a $2$-dimensional $\calQ$-vector space, and $u \in \End(V)$.
The following conditions are equivalent:
\begin{enumerate}[(i)]
\item $u$ is the sum of two nilpotent endomorphisms;
\item $u$ is represented by a matrix with diagonal zero;
\item $u^2$ is unispectral and diagonalisable, and if $u$ is unispectral and diagonalisable then either it equals zero or its eigenvalues are noncentral pure quaternions.
\end{enumerate}
\end{theo}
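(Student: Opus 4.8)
The plan is to establish the easy equivalence (i) $\Leftrightarrow$ (ii) directly, and then the spectral equivalence (ii) $\Leftrightarrow$ (iii). The implication (ii) $\Rightarrow$ (i) is trivial, since $\begin{bmatrix}0&\beta\\\gamma&0\end{bmatrix}$ is the sum of two square-zero matrices (keeping only the upper-right, resp.\ lower-left, entry). For (i) $\Rightarrow$ (ii) I would write $u=a+b$ with $a,b$ nilpotent; as $\dim V=2$, both $a$ and $b$ have square zero and rank $\le 1$. If one of them vanishes, or if both are nonzero with $\Ker a=\Ker b$, one checks that $u^2=0$, so $u$ is nilpotent of rank $\le 1$ and is represented by $0$ or by $\begin{bmatrix}0&1\\0&0\end{bmatrix}$. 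Otherwise $a$ and $b$ have rank $1$ with $\Ker a\ne\Ker b$; taking $e_b$ to span $\Ker b=\im b$ and setting $e_a:=a(e_b)$, one obtains a basis $(e_a,e_b)$ in which $a=\begin{bmatrix}0&1\\0&0\end{bmatrix}$ and $b=\begin{bmatrix}0&0\\\nu&0\end{bmatrix}$ for some $\nu\ne 0$, so $u$ is represented by the zero-diagonal matrix $\begin{bmatrix}0&1\\\nu&0\end{bmatrix}$.

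For (ii) $\Rightarrow$ (iii), start from a representative $M=\begin{bmatrix}0&\beta\\\gamma&0\end{bmatrix}$. Then $M^2=\Diag(\beta\gamma,\gamma\beta)$; when $\beta$ and $\gamma$ are both nonzero one has $\gamma\beta=\beta^{-1}(\beta\gamma)\beta\simeq\beta\gamma$, and otherwise $M^2=0$, so in every case $u^2$ is represented by a diagonal matrix with equal diagonal entries, i.e.\ it is unispectral and diagonalisable. Now assume in addition that $u$ is unispectral, diagonalisable and nonzero. Then $\beta$ and $\gamma$ are both nonzero (a nonzero nilpotent is never diagonalisable), so $M$ is invertible, and an easy computation identifies the eigenvalues of $u$ with the square roots of $\pi:=\gamma\beta$ in $\calQ$: an eigenvector $\begin{bmatrix}x\\1\end{bmatrix}$ forces $q=\gamma x$ and $x\gamma x=\beta$, hence $q^2=\gamma\beta$, while conversely $x=\gamma^{-1}q$ yields such an eigenvector. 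Writing $u\sim\Diag(\mu,\mu)$ with $\mu\ne 0$: since $u$ is not a central scalar (being represented by the non-scalar matrix $M$), the scalar $\mu$ is non-central, so it has a conjugate $\mu'\ne\mu$, which is again an eigenvalue of $u$ and hence another square root of $\pi$. It then remains to prove the elementary fact that two distinct conjugate square roots $w_1\ne w_2$ of one element of $\calQ$ are non-central pure quaternions: $w_1$ is non-central (else $w_1\simeq w_2$ would force $w_1=w_2$); conjugate elements share the same reduced trace and reduced norm; and the quadratic identity \eqref{eq:quadidentity} gives $\pi=t(w_1)\,w_1-N(w_1)=t(w_1)\,w_2-N(w_1)$, so that $t(w_1)(w_1-w_2)=0$ and therefore $t(w_1)=0$.

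For the converse (iii) $\Rightarrow$ (ii), which is the hardest part, I would split according to whether $u$ is bijective. If $u$ is not bijective and nonzero, then $u^2$ is not bijective either, so the hypothesis $u^2\sim\Diag(\lambda,\lambda)$ forces $\lambda=0$, i.e.\ $u^2=0$; since $\dim V=2$ this yields $\im u=\Ker u$ and hence a representative $\begin{bmatrix}0&1\\0&0\end{bmatrix}$. If $u$ is bijective (so $\lambda\ne 0$), the goal is to produce a vector $w$ that is an eigenvector of $u^2$ but not of $u$: then $(w,u(w))$ is a basis in which $u(w)$ is the second vector and $u^2(w)\in w\calQ$, so $u$ is represented by a zero-diagonal matrix. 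Such a $w$ exists unless every $u^2$-invariant line is $u$-invariant; but $u^2\sim\Diag(\lambda,\lambda)$ has infinitely many invariant lines (in a diagonalising basis $(g_1,g_2)$ the lines $(g_1c+g_2)\calQ$ for $c\in\F$ are pairwise distinct and $u^2$-invariant, and $\F$ is infinite), so in that case $u$ would have infinitely many eigenlines; as two non-conjugate eigenvalues would confine the eigenvectors of $u$ to just two lines, $u$ would then be unispectral, hence unispectral and diagonalisable. By (iii) and $u\ne 0$, its common eigenvalue $\mu$ would be a non-central pure quaternion, so $\mu^2\in\F$ and $u^2=\id_V\mu^2$ is a central scalar; then every line is $u^2$-invariant, whereas $u$, not being a central scalar (as $\mu\notin\F$), has a non-eigenvector by Lemma \ref{lemma:carachomotheties}, whose line is $u^2$-invariant but not $u$-invariant, a contradiction.

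I expect this last step of (iii) $\Rightarrow$ (ii) to be the main obstacle: one must rule out, using only the information encoded in (iii), the exceptional configuration in which $u$ is secretly a diagonalisable unispectral endomorphism whose eigenvalue-square lands in the center. It is precisely because (iii) forbids a diagonalisable unispectral $u$ from having non-central non-pure eigenvalues that the contradiction can be reached.
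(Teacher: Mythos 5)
Your proof is correct, and it takes a genuinely different route through the equivalences. The paper argues (ii)$\Rightarrow$(i), then (i)$\Rightarrow$(iii), then (iii)$\Rightarrow$(ii); for (i)$\Rightarrow$(iii) it starts from the decomposition $u=n_1+n_2$, builds a basis from $\Ker n_1$ and $\Ker n_2$ so that $u^2=\Diag(ba,ab)$, and derives purity of the eigenvalue $q$ by comparing $x(ba)=u^2(x)$ with $u(x)\,t(q)-xN(q)$. You instead prove (i)$\Leftrightarrow$(ii) directly via the geometry of the kernels of the two square-zero summands --- a clean, self-contained argument the paper does not state --- and prove (ii)$\Rightarrow$(iii) by identifying the eigenvalues of the zero-diagonal representative with square roots of $\pi=\gamma\beta$, then deducing $t(\mu)=0$ from the quadratic identity \eqref{eq:quadidentity} applied to two \emph{distinct conjugate} square roots of $\pi$; this is slicker and bypasses the paper's $x(ba)$ computation. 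For (iii)$\Rightarrow$(ii) the structure is the same (split on whether some eigenvector of $u^2$ fails to be an eigenvector of $u$), but where the paper argues via $x_1+x_2$ that $u$ is unispectral, you count eigenlines of $\Diag(\lambda,\lambda)$; this works, but you should note why $\F$ is infinite (it follows from Wedderburn's theorem, since a quaternion \emph{division} algebra over a finite field cannot exist). Both versions then conclude with Lemma \ref{lemma:carachomotheties}. Net effect: your proof is somewhat shorter and more conceptual, at the cost of one unremarked appeal to the infinitude of $\F$.
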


\begin{proof}
As usual, the implication (ii) $\Rightarrow$ (i) is obvious.

\vskip 2mm
\noindent \emph{Proof of} (i) $\Rightarrow$ (iii). \\
Assume that $u=n_1+n_2$ for nilpotent endomorphisms $n_1$ and $n_2$ of $V$.
If $n_1$ and $n_2$ have a common eigenvector $x$, then we extend it to a basis $(x,y)$ and we find that $u$ is represented by a matrix of the form $\begin{bmatrix}
0 & ? \\
0 & 0
\end{bmatrix}$ in this basis. Then $u^2=0$ so $u^2$ is unispectral and diagonalisable, and if $u$ is unispectral it is zero (because obviously $0$ is its sole eigenvalue).

Assume next that $n_1$ and $n_2$ have no common eigenvector. Then, pick $x \in \Ker n_2 \setminus \{0\}$ and $y \in \Ker n_1 \setminus \{0\}$,
so that $(x,y)$ is a basis of $V$. The respective matrices of $n_1$ and $n_2$ in this basis are
$N_1=\begin{bmatrix}
0 & 0 \\
a & 0
\end{bmatrix}$ and $N_2=\begin{bmatrix}
0 & b \\
0 & 0
\end{bmatrix}$, with $a \neq 0$ and $b \neq 0$ (otherwise $n_1$ and $n_2$ would have a common eigenvector). Then
the matrix of $u^2$ in that basis is $\Diag(ba,ab)$, and $ab=a(ba)a^{-1}$ so $u^2$ is unispectral and diagonalisable, as claimed.
Assume, on top of the previous assumption on $n_1$ and $n_2$, that $u$ is unispectral and diagonalisable, and denote by $q$ a corresponding eigenvalue.
Then $q^2=t(q)q-N(q)$ by the quadratic identity (see \eqref{eq:quadidentity}), so $u^2=u\lambda+\id_V \mu$ for $\lambda:=t(q)$ and $\mu:=-N(q)$.
Then $x(ba)=u^2(x)=u(x)\lambda+x\mu$, so $\lambda=0$. Hence $q$ is a pure quaternion.
If $q \in \F$ then $u$ is scalar, and we must have $u=0$ because $u$ is the sum of two nilpotent endomorphisms.
Hence $q \not\in \F$ or $u=0$. This proves that condition (iii) holds.

\vskip 2mm
\noindent \emph{Proof of} (iii) $\Rightarrow$ (ii). \\
Assume that condition (iii) holds.
Choose an eigenvector $x$ of $u^2$, with eigenvalue denoted by $q$.
If $x,u(x)$ are linearly independent, then the matrix of $u$ in the basis $(x,u(x))$ is $\begin{bmatrix}
0 & q \\
1 & 0
\end{bmatrix}$, and its diagonal is zero.

Assume now that every eigenvector of $u^2$ is an eigenvector of $u$.
Consider a basis $(x_1,x_2)$ of eigenvectors of $u^2$ with the same corresponding eigenvalue $q$.
Then $(x_1,x_2)$ is a basis of eigenvectors of $u$, and we write the corresponding matrix $\Diag(q_1,q_2)$.
Note that $x_1+x_2$ is an eigenvector of $u^2$, so it is also an eigenvector of $u$. As seen in the last part of Section \ref{section:reviewspectral},
this requires that $q_1 \simeq q_2$. Hence $u$ is unispectral and diagonalisable, and our assumptions require
then that either $q_1=0$ or $q_1$ is a noncentral pure quaternion.
In any case $q_1^2 \in \F$, and since $u^2$ is unispectral and diagonalisable it is a scalar endomorphism.
Hence every nonzero vector of $V$ is an eigenvector of $u^2$, and hence of $u$ by our assumptions.
By Lemma \ref{lemma:carachomotheties}, it follows that $u$ is scalar, and hence $q_1 \in \F$. Thus $q_1=0$
and we conclude that $u=0$, in which case (ii) is trivially satisfied.
\end{proof}

\section{Endomorphisms of special type II}\label{section:specialtypeII}

The aim of this section is to prove Proposition \ref{prop:specialtypeII}, i.e.,
determine which endomorphisms of special type II are sums of two nilpotent endomorphisms.
Throughout, $D$ denotes an arbitrary division ring with center denoted by $C$.

To start with, we must prove that the supertrace of an endomorphism of special type II is correctly defined.

\begin{lemma}
Let $V$ be a finite-dimensional $D$-vector space.
Let $\lambda,\mu$ belong to $C$, and let $a$ and $b$ be rank $1$ endomorphisms of $V$
such that $\id_V \lambda+a=\id_V\mu+b$. Then $(\dim V).\lambda+\str(a)=(\dim V).\mu+\str(b)$.
\end{lemma}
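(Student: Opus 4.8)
The plan is to reduce at once to the case $\dim V = 2$ and then put $u$ in a convenient form from which both supertraces can be read off directly. If $\dim V \geq 3$ there is nothing to prove: as recalled just before Proposition~\ref{prop:specialtypeII}, the decomposition of an endomorphism of special type II is unique in that case --- a nonzero scalar endomorphism has rank $\dim V \geq 3$, whereas a difference of two rank $1$ endomorphisms has rank at most $2$ --- so $\lambda = \mu$ and $a = b$ and the identity is trivial. If $\dim V = 1$, fix $e \in V \setminus \{0\}$ and write $a(e) = e\alpha$, $b(e) = e\beta$ with $\alpha,\beta \in D$; the hypothesis forces $\alpha - \beta = \mu - \lambda$, and since $\str(a)$ is the conjugacy class of $\alpha$ (and $\str(b)$ that of $\beta$) both sides of the claimed identity are the conjugacy class of $\lambda + \alpha = \mu + \beta$. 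So from now on $\dim V = 2$.

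Set $\nu := \mu - \lambda \in C$, so that $a - b = \id_V \nu$. If $\nu = 0$ then $a = b$, so assume $\nu \neq 0$. The key step is to show that $u$ is then represented by the diagonal matrix $\Diag(\lambda,\mu)$. Since $a = u - \id_V\lambda$ has rank $1$, its kernel is a line, and as $\lambda$ lies in the center this line is the $\lambda$-eigenspace $K_\lambda$ of $u$; likewise $K_\mu := \Ker(u - \id_V\mu)$ is a line. As $\lambda$ and $\mu$ are distinct central elements, $K_\lambda \cap K_\mu = \{0\}$, so picking $e_1$ spanning $K_\lambda$ and $e_2$ spanning $K_\mu$ gives a basis of $V$ in which $u = \Diag(\lambda,\mu)$.

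It then remains to compute in this basis. We get $a = u - \id_V\lambda = \Diag(0, \mu-\lambda)$, so $\im a = e_2 D$ and $a$ acts on $\im a$ as multiplication by the central scalar $\nu = \mu-\lambda$; hence $\str(a)$ is the singleton class of $\mu-\lambda$, and $(\dim V).\lambda + \str(a)$ is the class of $2\lambda + (\mu-\lambda) = \lambda+\mu$. Symmetrically $b = \Diag(\lambda-\mu, 0)$, so $\str(b)$ is the singleton class of $\lambda-\mu$ and $(\dim V).\mu + \str(b)$ is the class of $2\mu + (\lambda-\mu) = \lambda+\mu$. The two expressions coincide, which is the assertion.

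I do not expect a genuine obstacle; the one point that needs to be seen is that $\nu \neq 0$ already forces $u$ to be diagonalisable with \emph{central} eigenvalues. In particular this excludes the a priori worrisome situation where $a^2 = 0$, in which $u = \id_V\lambda + a$ would fail to be diagonalisable: indeed $\Ker b$ equals the $(\mu-\lambda)$-eigenspace of $a$ and is nonzero (since $\rk b = 1$), so the nonzero scalar $\mu-\lambda$ is an eigenvalue of $a$ and $a$ cannot be nilpotent. Once $u = \Diag(\lambda,\mu)$ with $\lambda,\mu$ central, centrality is exactly what makes the ``sum of the two eigenvalues'' a well-defined conjugacy class, independent of the decomposition --- which is why the invariant $\str$, seemingly finer than the reduced trace, is nonetheless pinned down.
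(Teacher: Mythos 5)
Your proof is correct. The approach is a mild variation of the paper's: after the same reduction to $\dim V = 2$ (you phrase it via uniqueness for $\dim V \geq 3$, the paper via $\rk(b-a) \leq 2$, but these are the same observation), the paper works with the images, using $V = \im a \oplus \im b$ and showing that $b$ vanishes on $\im a$, whereas you work with the kernels, observing that $\Ker a$ and $\Ker b$ are the $\lambda$- and $\mu$-eigenspaces of $u$ (here centrality of $\lambda,\mu$ is used in the same essential way) and concluding that $u = \Diag(\lambda,\mu)$ in a suitable basis. Your route is perhaps slightly more transparent, since once $u$ is diagonalised both supertraces are read off in one line by symmetry, rather than arguing once and appealing to symmetry for the other. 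Both hinge on the same two facts: the dimension bound forcing $\dim V = 2$, and the centrality of $\lambda,\mu$ making the relevant conjugacy classes singletons. No gap.
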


\begin{proof}
The statement is obvious if $\dim V=1$, since in that case we take an arbitrary $x \in V \setminus \{0\}$
to compute the supertrace of $a$ as the only $q \in D$ such that $a(x)=xq$, and ditto with $b$ and the same $x$.

Now, let us assume that $\dim V \geq 2$. The statement is trivial if $a=b$.
So, let us assume that $a \neq b$, to the effect that $\id_V (\lambda-\mu)=b-a$ with $\lambda-\mu \neq 0$.
Then $\dim V = \rk(b-a) \leq 2$, and hence $\dim V=2$.
Classically, it also follows that $V=\im a \oplus \im b$ because $a$ and $b$ have rank $1$.
Finally, choose $x \in \im a \setminus \{0\}$. Then $x(\lambda-\mu)=b(x)-a(x)$ and we deduce that $b(x)=0$ and
$\str(a)=\{\mu-\lambda\}$. Symmetrically $\str(b)=\{\lambda-\mu\}$, and since
$2\lambda+\{\mu-\lambda\}=\{\lambda+\mu\}=2\mu+\{\lambda-\mu\}$ the claimed statement is proved.
\end{proof}

Now that the supertrace has been correctly defined, we move on to the proof of Proposition \ref{prop:specialtypeII}.
In this proposition, the implication (ii) $\Rightarrow$ (iii) is, as always, obvious.
To start with, we will prove that every endomorphism of special type II with supertrace $0$
is represented by a matrix with diagonal zero.

So, let $V$ be a vector space over $D$ with finite dimension $n \geq 1$.
Let $a \in \End(V)$ be of rank $1$, and let $\lambda \in C$. We set $u:=\id_V \lambda+a$ and assume that
$\str(u)=\{0\}$, to the effect that $\str(a)=\{\mu\}$ for $\mu:=-n.\lambda$.
If $n=1$ it is then straightforward that $u=0$ and hence $u$ is represented by the zero matrix.
We assume from now on that $n \geq 2$.

The simple idea is to prove that, in some basis, $u$ is represented by a nonscalar matrix of $\Mat_n(C)$ with trace $0$.
Then, since $C$ is a field Theorem \ref{theo:basic} will yield that $u$ is represented by a matrix of $\Mat_n(C)$ with diagonal zero.

To start with, we claim that in some basis of $V$ the matrix of $a$ equals
$$\begin{bmatrix}
\mu & [?]_{1 \times n-1} \\
[0]_{(n-1) \times 1} & [0]_{n-1}
\end{bmatrix}$$
\emph{and all the entries of this matrix belong to $C$.}
\begin{itemize}
\item If $\mu \neq 0$, it suffices to note that $V=\im(a) \oplus \Ker(a)$ and to take a basis that is adapted to this decomposition, in which case in the first row all the entries are zero starting from the second one.
\item If $\mu=0$, we choose $e_1 \in \im a \setminus \{0\}$, then $e_2 \in V$ such that $a(e_2)=e_1$, we note that $e_1$ and $e_2$ are linearly independent over $D$,
and we extend $(e_1,e_2)$ into a basis of $V$ by using vectors from $\Ker a$ (which works because $\Ker a \oplus e_2 D=V$).
\end{itemize}
Therefore, in the same basis the endomorphism $u$ is represented by a matrix $M$ of $\Mat_n(C)$
with trace $\mu+n\lambda$, i.e., with trace zero. We must check that $M$ is nonscalar but this is easy:
indeed, if $M$ is scalar then $u=\id_V \alpha$ for some $\alpha \in C$ and we recover $\id_V(\mu-\alpha)=a$, contradicting the fact that $\dim V>1$ and $\rk(a)=1$.
Hence, $M$ is nonscalar with trace $0$. Applying Theorem \ref{theo:basic} with the field $C$, we conclude that $u$ is represented by a matrix with diagonal zero.

\vskip 3mm
Next, we deal with the implication (iii) $\Rightarrow$ (i) in Proposition \ref{prop:specialtypeII}.
This is an adaptation of the proof of (5.1) in \cite{dLC}, and we include it for the sake of completeness.
The proof works by induction on $n$. The case $n=1$ is obvious: if an endomorphism of special type II of a $1$-dimensional vector space is zero, then it is clear that its supertrace is zero.

Now, we let $V$ be an $n$-dimensional vector space over $D$, and $u \in \End_D(V)$ be of special type II, and we write
$u=\id_V \lambda+a$ for some $\lambda \in C$ and some $a \in \End_D(V)$ of rank $1$. And we assume that there are nilpotent endomorphisms
$n_1$ and $n_2$ of $V$ such that $u=n_1+n_2$. Our aim is to prove that $\str(u)=\{0\}$.

We start by choosing an arbitrary nonzero vector $x$ in $\Ker n_2$.
Assume first that $n_1(x)=0$. Then $u(x)=0$, and so $a(x)=-x\lambda$.
Then $n_1$ and $n_2$ induce respective nilpotent endomorphisms $\overline{n_1}$ and $\overline{n_2}$ of $V/x D$;
$u$ and $a$ induce respective endomorphisms $\overline{u}$ and $\overline{a}$ of $V/x D$;
we have $\overline{u}=\id \lambda+\overline{a}$ and $\overline{u}=\overline{n_1}+\overline{n_2}$,
and finally $\rk(\overline{a}) \leq 1$.
\begin{itemize}
\item Assume that $\overline{a}=0$, i.e.\ $\im a=x D$. Then $\overline{u}$ is scalar so we must have $\lambda=0$.
Then $a(x)=0$ and hence $\str(a)=\{0\}$, to the effect that $\str(u)=\{0\}$.

\item Assume that $\overline{a} \neq 0$. Then $\rk \overline{a}=1$, and by induction we gather that
$\str(\overline{u})=\{0\}$, so $\str(\overline{a})=\{-(n-1)\lambda\}$.
Besides $a(x)=- x \lambda$ leads to $\lambda=0$
(otherwise $\im a=x D$ and we would find $\overline{a}=0$), whence $\str(\overline{a})=\{0\}$
and $\str(u)=\str(a)$. We will conclude by noting that $\str(a)=\str(\overline{a})$.
To see this, we take $y \in (\im a) \setminus \{0\}$ and write $a(y)=yq$ for some $q \in D$, to the effect that $\str(a)$
is the conjugacy class of $q$; Then, if $\str(a) \neq \{0\}$ we deduce that $q \neq 0$, so the coset of $y$ mod $xD$ spans the range of $\overline{a}$,
to the effect that $\str(\overline{a})$ is the conjugacy class of $q$, yielding the claimed equality;
Otherwise $\str(a)=\{0\}=\str(\overline{a})$. Hence $\str(a)=\str(\overline{a})$ in any case, and we conclude that $\str(u)=\{0\}$.
\end{itemize}

From now on, we assume that $n_1(x) \neq 0$ and we consider the local nilindex $d$ of $x$ with respect to $n_1$,
that is the least positive integer such that $n_1^d(x)=0$. Classically $x,n_1(x),\dots,n_1^{d-1}(x)$ are linearly independent
and the $D$-linear subspace $W:=\Vect\bigl(x,n_1(x),\dots,n_1^{d-1}(x)\bigr)$ is invariant under $n_1$.
Note in particular that $a(x)=-x \lambda+n_1(x)$ belongs to $W$ and is nonzero, so $\im(a) \subseteq W$ because $\rk(a)=1$.
It follows that $W$ is invariant under $a$, and we conclude that it is also invariant under $n_2=u-n_1=\id_V\lambda+a-n_1$, and of course also under $u$.
Now, we have to split the discussion into two subcases.

\vskip 2mm
\noindent \textbf{Case 1: $d<n$.} \\
All the endomorphisms $u,a,n_1,n_2$ induce endomorphisms of $W$, denoted respectively by $u_W,a_W, (n_1)_W$ and $(n_2)_W$.
The latter two are nilpotent, and we have $u_W=(n_1)_W+(n_2)_W$, so by induction $\str(u_W)=\{0\}$.
We have seen that $a(x) \neq 0$ so $a_W \neq 0$, to the effect that $\rk(a_W)=1$.
Then $\im(a_W) \subseteq \im(a)$ with $\rk(a)=1=\rk(a_W)$, leading to $\im(a_W)=\im(a)$.
It is then obvious that $\str(a_W)=\str(a)$.

Likewise, we obtain induced endomorphisms $u^{V/W},a^{V/W}, (n_1)^{V/W}$ and $(n_2)^{V/W}$ of the quotient space $V/W$, with $a^{V/W}=0$,
and $(n_1)^{V/W}$ and $(n_2)^{V/W}$ nilpotent, and $\id_{V/W} \lambda=u^{V/W}=(n_1)^{V/W}+(n_2)^{V/W}$. Hence $\lambda=0$ because $\dim(V/W)>0$.
It follows that
$$\str(u)=\str(a)=\str(a_W)=\str(u_W)=\{0\}.$$

\vskip 2mm
\noindent \textbf{Case 2: $d=n$.} \\
Then $(x,n_1(x),\dots,(n_1)^{d-1}(x))$ is a basis of $V$.
The respective matrices of $n_1$ and $n_2$ in this basis have the following forms
$$N_1=\begin{bmatrix}
[0] & [0]_{1 \times (n-1)} \\
[?]_{(n-1) \times 1} & N'_1
\end{bmatrix} \quad \text{and} \quad N_2=\begin{bmatrix}
[0] & [?]_{1 \times (n-1)} \\
[0]_{(n-1) \times 1} & N'_2
\end{bmatrix}$$
with $N'_1$ and $N'_2$ in $\Mat_{n-1}(D)$, and obviously both $N'_1$ and $N'_2$ are nilpotent.
The matrix of $a$ in the same basis looks as follows:
$$A=\begin{bmatrix}
-\lambda & [?]_{1 \times (n-1)} \\
[?]_{(n-1) \times 1} & A'
\end{bmatrix},$$
where $\rk A' \leq 1$ and all the rows in $A'$ are zero starting from the second one (recall that $\im(a)=a(x) D \subset \Vect(x,n_1(x))$).
Hence $I_{n-1} \lambda +A'$ is the sum of two nilpotent matrices, and we can apply the inductive hypothesis
to it if $A'$ has rank $1$ (which might not be the case!).

Remember that $a(x)$ is a nonzero vector of $\im(a)$, and consider the associated eigenvalue $q$ for $a$.
The equality $a^2(x)=a(x)q$ leads to $a(-x\lambda+n_1(x))=a(x)q$ and hence to
$$a(n_1(x))=a(x)(q+\lambda)=-x\lambda(q+\lambda)+n_1(x)(q+\lambda).$$
Hence, the upper-left entry of $A'$ equals $q+\lambda$.
Now, we complete the proof by splitting the discussion into two cases:
\begin{itemize}
\item Assume that $A'=0$. Then on the one hand $\lambda I_{n-1}$ is the sum of two nilpotent matrices, which leads to $\lambda=0$;
on the other hand $q+\lambda=0$, so $q=0$ and $\str(u)=\str(a)=\{0\}$.
\item Assume finally that $A' \neq 0$. Then the column space of $A'$ is spanned by the first standard basis vector,
and it is then clear that the conjugacy class of $q+\lambda$ is the supertrace of $A'$. It follows by induction that $(n-1)\lambda+q+\lambda=0$, that is $q=-n\lambda$, and hence $\str(u)=\{0\}$.
\end{itemize}

The proof is completed.

\section{Endomorphisms of special type III}\label{section:specialtypeIII}

Here, we let $\calQ$ be a quaternion division algebra over a field $\F$.
We will prove the following result:

\begin{prop}
Let $V$ be a vector space over $\calQ$, and $u \in \End(V)$
be an endomorphism of special type III. Then $u$ is not the sum of two nilpotent endomorphisms of $V$.
\end{prop}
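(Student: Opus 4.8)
The plan is to deduce the proposition from Proposition~\ref{prop:specialtypeIII}. By the description of diagonalisable unispectral endomorphisms recalled in Section~\ref{section:reviewspectral}, an endomorphism of special type III is represented, in a suitable basis, by a matrix $\Delta=\Diag(q,q,q)$ with $q\in\calQ\setminus\{0\}$; since being a sum of two nilpotent endomorphisms does not depend on the basis, everything reduces to showing that such a $\Delta$ is not a sum of two nilpotent matrices of $\Mat_3(\calQ)$, which is exactly Proposition~\ref{prop:specialtypeIII}. If $q\in\F$, then $\Delta$ is a nonzero central matrix; were $\Delta=N_1+N_2$ with $N_2$ nilpotent, then $N_1=\Delta\,(I_3-q^{-1}N_2)$ would be invertible, which is absurd. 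So I shall assume $q\notin\F$; then $q^2=\tau q-\nu$ with $\tau:=t(q)\in\F$ and $\nu:=N(q)\in\F\setminus\{0\}$, the polynomial $X^2-\tau X+\nu$ is irreducible over $\F$ (a root $r\in\F$ would yield $(q-r)(q-(\tau-r))=0$, hence $q\in\F$), and $\Delta^2=\tau\Delta-\nu I_3$.

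Suppose now $\Delta=N_1+N_2$ with $N_1,N_2$ nilpotent; then $N_1^3=N_2^3=0$, and neither $N_i$ is zero. A common kernel vector of $N_1$ and $N_2$ would lie in $\Ker\Delta=\{0\}$, so $\rk N_1+\rk N_2\geq3$; since $\rk N_i\in\{1,2\}$, the pair $(\rk N_1,\rk N_2)$ is $(1,2)$, $(2,1)$ or $(2,2)$. If $\rk N_1=1$ (the case $\rk N_2=1$ being symmetric), then $N_1^2=0$, and $N_1=ab$ for a nonzero column $a$ and a nonzero row $b$. Expanding $0=(\Delta-N_1)^3$ and using $N_1^2=0$ gives
\[
\Delta^3=(\Delta^2a-a\,s)\,b+(\Delta a)(b\Delta)+a\,(b\Delta^2),\qquad s:=b\Delta a\in\calQ .
\]
As $\Delta^2a=\tau\Delta a-\nu a$, the coefficient $\Delta^2a-a\,s$ lies in $(\Delta a)\calQ+a\calQ$, whence $\im\Delta^3\subseteq(\Delta a)\calQ+a\calQ$, a subspace of $\calQ^3$ of dimension at most $2$; this contradicts the invertibility of $\Delta^3$.

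There remains the case $\rk N_1=\rk N_2=2$, which is the delicate one: here each $N_i$ has a $1$-dimensional kernel and $N_i^2\neq0$. Fix $v$ spanning $\Ker N_1$ and set $W:=v\calQ+(\Delta v)\calQ$. Since $\Delta v=N_2v$ is nonzero, and since $\Delta v\in v\calQ$ would make $v$ an eigenvector of $N_2$ for its only eigenvalue $0$ and force $\Delta v=0$, the subspace $W$ is $2$-dimensional; it is $\Delta$-invariant because $\Delta(\Delta v)=\tau\Delta v-\nu v\in W$. If $W$ is moreover $N_1$-invariant, then it is $N_2$-invariant as well, so $N_1$ and $N_2$ induce nilpotent, hence zero, endomorphisms of the $1$-dimensional quotient $V/W$; therefore $\im N_i\subseteq W$, and comparing dimensions gives $\im N_1=\im N_2=W$, whence $V=\im\Delta\subseteq W$ --- absurd.

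It remains to handle the subcase where $W$ is not $N_1$-invariant, which is the heart of the matter. Then $N_1(\Delta v)\notin W$, so $(v,\Delta v,N_1\Delta v)$ is a basis; in it, $N_1$ is represented by $\begin{bmatrix}0&0&c\\0&0&0\\0&1&0\end{bmatrix}$ with $c\neq0$ (else $N_1^2=0$), and $\Delta$ by $M=\begin{bmatrix}0&-\nu&a\\1&\tau&b\\0&0&d\end{bmatrix}$, the first two columns being forced by $\Delta v$ and $\Delta^2v=\tau\Delta v-\nu v$. The identity $M^2=\tau M-\nu I_3$ then forces, by its $(2,3)$- and $(3,3)$-entries, $a=-bd$ and $d^2=\tau d-\nu$; in particular $d$ is a root of $X^2-\tau X+\nu$, hence $d\notin\F$. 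With $N_2=M-N_1$, a direct computation of the first row of $N_2^2$, and then of $N_2^3$ (using $d^2=\tau d-\nu$), shows that the vanishing of the first row of $N_2^3$ forces (combining its three entries) $bd+c=\nu\tau$ together with $bd+c=-\nu d$, so $\nu\tau=-\nu d$ and therefore $d=-\tau\in\F$ --- contradicting $d\notin\F$. Thus no decomposition exists. I expect this last computation to be the main obstacle; throughout one must be wary of commutative habits, the only available algebraic relation for $\Delta$ being $\Delta^2=\tau\Delta-\nu I_3$ with $\tau,\nu$ central.
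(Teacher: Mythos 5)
Your argument is correct, but it follows a genuinely different route from the paper's. The paper's proof, after discarding the case of a central eigenvalue, fixes $x\in\Ker n_2\setminus\{0\}$, shows that $n_1$ and $n_2$ have no common nontrivial invariant subspace, deduces that $(x,n_1(x),n_1^2(x))$ is a basis, and reads off that the lower $2\times2$ block of $u$ in that basis is of special type II and is a sum of two nilpotents; Proposition~\ref{prop:specialtypeII}, together with Lemma~\ref{lemma:triangulareigenvalues}, then yields the contradiction. You instead reduce everything to a matrix computation with $\Delta=\Diag(q,q,q)$ and the relation $\Delta^2=\tau\Delta-\nu I_3$, dispatch $q\in\F$ by invertibility, kill the rank-$1$ case by the neat observation that $(\Delta-N_1)^3=0$ forces $\im\Delta^3$ into a $2$-dimensional subspace, and finish the rank-$(2,2)$ case by a direct block computation in the basis $(v,\Delta v,N_1\Delta v)$ built from $v\in\Ker N_1$. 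What your approach buys is self-containedness (no appeal to Proposition~\ref{prop:specialtypeII} or Lemma~\ref{lemma:triangulareigenvalues}), at the cost of more hands-on algebra; what the paper's version buys is brevity, by recycling the already-established type-II analysis. Two small points to tidy up: (1) the third column of your matrix for $N_1$ being $(c,0,0)^T$ deserves a word --- it holds because $N_1^3=0$ forces $N_1^2\Delta v\in\Ker N_1=v\calQ$; (2) the intermediate relations you extract from the first row of $N_2^3$ are not literally ``$bd+c=\nu\tau$ and $bd+c=-\nu d$''. Entry $(1,1)$ does give $c+bd=\nu\tau$, but entries $(1,2)$ and $(1,3)$, together with $a=-bd$, $d^2=\tau d-\nu$ and $(1,1)$, instead give $b=-\nu-\tau d$ and then $-\nu d=\nu\tau$, whence $d=-\tau\in\F$; so the contradiction you announced does hold, just with slightly different bookkeeping.
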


\begin{proof}
To start with, we can immediately discard the case where the spectrum of $u$ contains an element of $\F$,
as then $u$ would be scalar and nonzero. So, from now on we assume that $u$ has no eigenvalue in $\F$.

The proof will be based on two key observations. The first one is that $u$ is an automorphism of $V$.
The second one is that there are scalars $\lambda,\mu$ in $\F$ such that $u^2=u\lambda+\id_V \mu$.
Indeed, choose an eigenvalue $q$ of $u$, so that $u$ is represented by the matrix $\Diag(q,q,q)$ in some basis.
Then $q^2=q t(q)-N(q)$ where $t(q)$ and $N(q)$ belong to $\F$, and hence it is clear that $\lambda:=t(q)$ and $\mu:=-N(q)$ qualify.

Now, we assume that $u=n_1+n_2$ for nilpotent endomorphisms $n_1$ and $n_2$ of $V$, and we seek a contradiction.

Our first observation is that $n_1$ and $n_2$ have no common nontrivial invariant subspace.
Indeed, assume that such a subspace $W$ exists: then either it has dimension $1$, in which case $n_1$ and $n_2$ must vanish on it,
so $W \subset \Ker u$; or it has dimension $2$, in which case $n_1$ and $n_2$ induce nilpotent endomorphisms of $V/W$,
which must then equal zero, and hence $u$ maps into $W$. In any case we contradict the fact that $u$ is an automorphism.

Now, we choose $x \in \Ker n_2 \setminus \{0\}$. Then $x \calQ$ is not invariant under $n_1$, so $x,n_1(x)$ are linearly independent.
Note that $u(x)=n_1(x)$. Since $u^2=u \lambda+\id_V \mu$, we deduce that $u(n_1(x))$ is a linear combination of $x$ and $n_1(x)$.
In particular $\Vect(x,u(x))$ is invariant under $u$, and hence cannot also be invariant under $n_1$ (otherwise it would be invariant under $u-n_1=n_2$).
It follows that $x,n_1(x),n_1^2(x)$ are linearly independent, and in particular the range of $n_1$ is $\Vect(n_1(x),n_1^2(x))$ because $n_1$ is singular.
The respective matrices of $n_1$ and $n_2$ in the basis $(x,n_1(x),n_1^2(x))$ then have the following forms:
$$N_1=\begin{bmatrix}
0 & [0]_{1 \times 2} \\
[?]_{2 \times 1} & N'_1
\end{bmatrix} \quad \text{and} \quad N_2=\begin{bmatrix}
0 & [?]_{1 \times 2} \\
[0]_{2 \times 1} & N'_2 \\
\end{bmatrix},$$
with both $N'_1$ and $N'_2$ nilpotent in $\Mat_2(\calQ)$, whereas the one of $u$ takes the form
$$M=\begin{bmatrix}
0 & \mu & ? \\
1 & \lambda & ? \\
0 & 0 & \gamma
\end{bmatrix} \quad \text{for some $\gamma \in \calQ$.}$$
We observe from Lemma \ref{lemma:triangulareigenvalues} that $\gamma$ is an eigenvalue of $u$, and hence $\gamma \not\in \F$
because of our starting assumptions.

By extracting the lower $2$-by-$2$ block, we conclude that some matrix of the form
$K:=\begin{bmatrix}
\lambda & ? \\
0 & \gamma
\end{bmatrix}$ is the sum of two nilpotent matrices. Yet $K$ represents an endomorphism of special type II and whose supertrace contains
$\lambda+\gamma$. To see this, we simply take the matrix $K-I_2 \lambda$ and observe that is has rank $1$
and obviously $\gamma-\lambda$ in its spectrum (see Lemma \ref{lemma:triangulareigenvalues}), and $\gamma-\lambda \neq 0$.
It then follows from Proposition \ref{prop:specialtypeII} that $\lambda+\gamma=0$, which contradicts the fact that $\gamma \not\in \F$.
This final contradiction completes the proof.
\end{proof}

\section{Tools for the last proof}

Here we prove several lemmas that are required in the final proof. Throughout, we let $\calQ$ be a quaternion division algebra over a field $\F$.

\subsection{A key result for the case $n=2$}

The dimension $3$ case will require a result that is implicit in \cite{dLC} but which we must prove:

\begin{lemma}\label{lemma:completionn=2}
Let $(a,b) \in \calQ^2$ be such that $t(a+b)=0$. Then there exists $\delta \in \calQ$ such that the matrix
$$\begin{bmatrix}
a & \delta \\
1 & b
\end{bmatrix}$$
is the sum of two square-zero matrices of $\Mat_2(\calQ)$.
\end{lemma}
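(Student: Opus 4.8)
The plan is to write the target matrix as a sum of two square-zero matrices $\begin{bmatrix} 0 & * \\ * & 0\end{bmatrix}$-type objects, but more flexibly: a $2\times 2$ square-zero matrix over $\calQ$ is exactly a rank-$\le 1$ matrix with reduced trace zero whose entries satisfy the vanishing of the "determinant-like" quantity coming from $\eqref{eq:quadidentity}$. Rather than parametrize all square-zero matrices, I would look for a decomposition of the shape
\[
\begin{bmatrix} a & \delta \\ 1 & b \end{bmatrix} = \begin{bmatrix} a & s \\ 1 & -a \end{bmatrix} + \begin{bmatrix} 0 & \delta - s \\ 0 & b+a \end{bmatrix},
\]
and try to arrange both summands to be square-zero. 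The second summand is upper-triangular-ish with a nonzero bottom-right entry $a+b$, so it is square-zero only if $a+b=0$; that is too restrictive. So instead I would split off the second column differently, e.g.\ write the second summand as $\begin{bmatrix} c & \delta-s \\ 0 & b-c'\end{bmatrix}$ — this does not obviously work either. The cleaner route, and the one I expect the paper takes, is: since $t(a+b)=0$, the scalar $c:=a+b$ is a pure quaternion, hence (by the remark that every pure quaternion is a Lie commutator, or directly) we can find a presentation making the "diagonal correction" vanish.

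**Key steps.** First, reduce to the case $b=-a$: set $c:=a+b$, so $t(c)=0$, i.e.\ $c\in\calQ^p$. Conjugating by $\mathrm{Diag}(1,g)$ replaces $(a,\delta,b)$ by $(a, \delta g^{-1}, g b g^{-1})$ and the bottom-left entry $1$ by $g$; I would use a translation instead — replace the matrix by its conjugate under a unitriangular $\begin{bmatrix}1 & u\\ 0 & 1\end{bmatrix}$, which changes $b \leftrightarrow b - (\text{something involving } u)$ and lets me absorb $c$ into $\delta$ while keeping the $(2,1)$-entry equal to $1$. Concretely $\begin{bmatrix}1&u\\0&1\end{bmatrix}\begin{bmatrix}a&\delta\\1&b\end{bmatrix}\begin{bmatrix}1&-u\\0&1\end{bmatrix} = \begin{bmatrix}a+u & \delta-au+ub-u(\cdot)u\\ 1 & b-u\end{bmatrix}$ — choosing $u$ appropriately I can force the new bottom-right entry to be $-(a+u)$, i.e.\ solve $b-u = -(a+u)$, which gives $b = -a$ automatically only when... it does not constrain $u$ at all since $u$ cancels. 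That is the wrong conjugation; the right move is $\begin{bmatrix}1&0\\v&1\end{bmatrix}$-conjugation, which preserves the $(1,2)$-entry and the $(2,1)$-entry up to adjustments and shifts the diagonal by $\pm v\delta$ and $\pm\delta v$; choosing $v$ so that the new diagonal entries become negatives of each other uses exactly the hypothesis $t(a+b)=0$ together with the fact that $t$ is a central linear form. After this reduction I am looking at a matrix $\begin{bmatrix} a' & \delta' \\ 1 & -a'\end{bmatrix}$, which has reduced trace zero; it is square-zero iff $(a')^2 + \delta' = 0$, i.e.\ iff $\delta' = -(a')^2$. So the single matrix is itself square-zero for that one value of $\delta'$ — but I need a \emph{sum of two}, with $\delta$ essentially free, so I split $\begin{bmatrix} a' & \delta' \\ 1 & -a'\end{bmatrix} = \begin{bmatrix} a' & -(a')^2 \\ 1 & -a'\end{bmatrix} + \begin{bmatrix} 0 & \delta'+(a')^2 \\ 0 & 0\end{bmatrix}$: the first is square-zero, the second is visibly square-zero, and undoing the conjugation gives the claim with the resulting value of $\delta$.

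**Main obstacle.** The crux is the bookkeeping of the conjugation in the first step: showing that the hypothesis $t(a+b)=0$ is \emph{exactly} what is needed to conjugate $\begin{bmatrix} a & \delta \\ 1 & b\end{bmatrix}$ (for a suitable $\delta$, or for all $\delta$ after adjusting) into a matrix of the form $\begin{bmatrix} a' & \delta' \\ 1 & -a'\end{bmatrix}$ while keeping the $(2,1)$-entry a unit. This is where the noncommutativity bites: $v\delta$ and $\delta v$ are different, and one must use that the map $q\mapsto t(q)$ spans all central linear forms, or invoke the Lie-commutator description of $\calQ^p$, to guarantee the relevant linear equation is solvable. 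Once that reduction is in hand, the rest is the elementary observation that a $2\times2$ matrix $\begin{bmatrix} a' & \delta'\\ 1 & -a'\end{bmatrix}$ is square-zero precisely when $\delta' = -(a')^2$ (a direct computation using only associativity), plus the trivial square-zero-ness of a strictly-upper-triangular $2\times2$ matrix. I would also double-check that undoing the conjugation does not destroy the "sum of two square-zero matrices" structure — it does not, since conjugation preserves square-zero-ness and sums.
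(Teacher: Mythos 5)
Your overall strategy is the same as the paper's (conjugate to a form with diagonal $(s,-s)$, then peel off a square-zero matrix and a strictly-triangular one), but the crucial reduction step is not actually proved, and your attempts at it would not work as stated. You want to conjugate $\begin{bmatrix} a & \delta \\ 1 & b \end{bmatrix}$ into the form $\begin{bmatrix} a' & \delta' \\ 1 & -a' \end{bmatrix}$ with the $(2,1)$-entry \emph{equal to} $1$, since your final step (``square-zero $\Leftrightarrow$ $\delta'=-(a')^2$'') depends on that. You correctly discard upper-unitriangular conjugation (it preserves the diagonal sum and hence cannot fix $a+b\neq 0$). But you do not carry out the lower-unitriangular alternative, and if one does carry it out, the constraint ``$(2,1)$-entry stays $1$'' together with ``diagonal becomes $(a',-a')$'' forces $b = -v^2 a v^{-2}$, i.e.\ $a$ must already be conjugate to $-b$. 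The hypothesis only gives $t(a)=t(-b)$, which is weaker: two quaternions with the same trace need not be conjugate, since conjugacy in $\calQ$ is governed by \emph{both} trace and norm (Lemma~\ref{lemma:conjugate}).

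What closes this gap in the paper is Lemma~\ref{lemma:translateconjugate}: given $t(a)=t(-b)$, there exists $q$ with $a+q \simeq -b+q$. This is a genuinely quaternion-specific statement (its proof uses nondegeneracy of the norm form and the classification of conjugacy classes by $(t,N)$) and it is the ingredient your sketch replaces by a vague appeal to ``$t$ spans the central linear forms'' and ``pure quaternions are commutators''. Even after using that lemma, the conjugating matrix is $\Diag(1,g)\cdot\begin{bmatrix}1&q\\0&1\end{bmatrix}$ and the resulting $(2,1)$-entry is $g$, not $1$; the paper never normalizes it to $1$ (doing so would destroy the diagonal $(s,-s)$), and instead uses the different split $\begin{bmatrix} s & s^2 \\ g & -s \end{bmatrix}=\begin{bmatrix} s & s^2 \\ -1 & -s \end{bmatrix}+\begin{bmatrix} 0 & 0 \\ g+1 & 0 \end{bmatrix}$, which absorbs the awkward $(2,1)$-entry into the strictly-lower-triangular summand. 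In short: your target normal form is stronger than what can be reached, the equation you would need to solve requires $a\simeq -b$ rather than $t(a+b)=0$, and the missing quaternionic lemma plus the modified decomposition are precisely what makes the argument go through.
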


The proof requires the following classical lemma on quaternion algebras, for which we refer to \cite{dLC}:

\begin{lemma}\label{lemma:conjugate}
Let $q,q'$ be elements of $\calQ \setminus \F$. Then $q \simeq q'$ if and only if $t(q)=t(q')$ and $N(q)=N(q')$.
\end{lemma}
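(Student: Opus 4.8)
The plan is to treat the two implications separately: the forward direction by a short direct computation, and the converse via the Skolem--Noether theorem.

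For the ``only if'' direction, suppose $q'=\gamma q\gamma^{-1}$ with $\gamma\in\calQ^\times$. Since $t$ is a central function (that is, $t(xy)=t(yx)$), I get $t(q')=t(\gamma q\gamma^{-1})=t(q\gamma^{-1}\gamma)=t(q)$ at once. For the norm, I would first record that $N$ is multiplicative: as $x\mapsto\overline{x}$ is an antiautomorphism and each value of $N$ is central in $\calQ$, one has $N(xy)=xy\,\overline{xy}=x\,(y\overline{y})\,\overline{x}=N(y)\,x\overline{x}=N(x)\,N(y)$. Hence $N(q')=N(\gamma)\,N(q)\,N(\gamma)^{-1}=N(q)$, the three factors all lying in the field $\F$.

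For the ``if'' direction, assume $t(q)=t(q')$ and $N(q)=N(q')$ with $q,q'\notin\F$, and set $p(X):=X^2-t(q)\,X+N(q)\in\F[X]$. By the quadratic identity \eqref{eq:quadidentity}, $q$ is a root of $p$, and so is $q'$, since it has the same trace and norm. The polynomial $p$ is irreducible over $\F$: a factorisation $p=(X-a)(X-b)$ with $a,b\in\F$ would give $(q-a)(q-b)=0$ in the division ring $\calQ$, forcing $q\in\{a,b\}\subseteq\F$, a contradiction. So $E:=\F[X]/(p)$ is a field, and mapping the class of $X$ to $q$, resp.\ to $q'$, defines two $\F$-algebra embeddings $f,g\colon E\hookrightarrow\calQ$. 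Because $\calQ$ is a central simple $\F$-algebra and $E$ is a simple $\F$-algebra, the Skolem--Noether theorem yields some $\gamma\in\calQ^\times$ with $g=(x\mapsto\gamma x\gamma^{-1})\circ f$; evaluating at the class of $X$ gives $q'=\gamma q\gamma^{-1}$, as desired.

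The only ingredient here that is not entirely routine is the Skolem--Noether theorem, and this is precisely where the hypothesis $q,q'\notin\F$ (ensuring that $p$ is irreducible, so that $E$ is simple) and the fact that $\calQ$ is central simple over $\F$ both enter. If one wanted a self-contained argument, one could instead regard $\calQ$ as a $2$-dimensional right vector space over the field $\F[q]$ and analyse the $\F[q]$-linear operator $x\mapsto q'x-xq$ on it, aiming to show that it is noninjective; but proving that it is noninjective exactly when the traces and norms agree amounts to reproving Skolem--Noether in this special case, so I would simply invoke the general theorem.
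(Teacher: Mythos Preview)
Your proof is correct. The forward implication is handled cleanly via the centrality of $t$ and the multiplicativity of $N$, and the converse is the standard Skolem--Noether argument: the hypothesis $q,q'\notin\F$ ensures that the common minimal polynomial $p(X)=X^2-t(q)X+N(q)$ is irreducible over $\F$, so $\F[X]/(p)$ is a field (hence simple), and the two embeddings into the central simple algebra $\calQ$ differ by an inner automorphism. The argument works uniformly, including in characteristic~$2$, which the paper does not exclude.

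As for comparison: the paper does not actually prove this lemma. It is stated as a ``classical lemma on quaternion algebras'' and the reader is referred to~\cite{dLC} for a proof. So there is no in-paper argument to compare your approach against; your Skolem--Noether route is precisely the textbook proof one would expect, and it fills the gap cleanly. Your closing remark about the alternative via the operator $x\mapsto q'x-xq$ is a reasonable aside, though not needed for the proof itself.
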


We will use the following application of this lemma:

\begin{lemma}\label{lemma:translateconjugate}
Let $a$ and $b$ belong to $\calQ$ and be such that $t(a)=t(b)$. Then there exists $q \in \calQ$ such that $a+q \simeq b+q$.
\end{lemma}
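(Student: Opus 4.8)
The plan is to reduce the statement to Lemma~\ref{lemma:conjugate}. We are given $a,b\in\calQ$ with $t(a)=t(b)$ and we must produce $q\in\calQ$ with $a+q\simeq b+q$. The first thing I would do is dispose of a trivial case: if $a=b$ we simply take $q=0$. So assume $a\neq b$. Then I would try to choose $q$ so that $a+q\notin\F$ (equivalently, so that $a+q$ is noncentral), because Lemma~\ref{lemma:conjugate} only gives a conjugacy criterion for elements outside $\F$; since $\F$ has at most one element $\alpha$ with $\alpha-a$ central (namely, if $a$ itself is central, the offending value is $q\in\F-a$, which is a coset of $\F$, but $\calQ\setminus\F$ is nonempty so we can avoid it), and similarly at most a coset of $\F$ to avoid for $b+q$, we can pick $q$ avoiding both bad sets provided $\calQ$ is not the union of two cosets of $\F$ — which holds since $\dim_\F\calQ=4>1$. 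Actually the cleanest route is: pick any $q\in\calQ$ with $a+q\notin\F$; then since $t(a+q)=t(a)+t(q)=t(b)+t(q)=t(b+q)$, either $b+q\notin\F$ as well (and we are almost done), or $b+q\in\F$, in which case I would instead perturb $q$ slightly.

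The heart of the matter is this: once $a+q\notin\F$ and $b+q\notin\F$, Lemma~\ref{lemma:conjugate} says $a+q\simeq b+q$ iff $t(a+q)=t(b+q)$ and $N(a+q)=N(b+q)$. The trace condition is automatic from $t(a)=t(b)$ and the $\F$-linearity of $t$. So everything hinges on arranging $N(a+q)=N(b+q)$. Here I would use the quadratic identity \eqref{eq:quadidentity}: for any $x\in\calQ$, $N(x)=x\overline{x}$ and $\overline{x}=t(x)-x$, so $N(a+q)-N(b+q)=(a+q)(t(a+q)-(a+q))-(b+q)(t(b+q)-(b+q))$. Expanding and using $t(a+q)=t(b+q)=:s$, this equals $s(a-b)-\bigl((a+q)^2-(b+q)^2\bigr)=s(a-b)-\bigl((a+q)^2-(b+q)^2\bigr)$; writing $(a+q)^2-(b+q)^2=(a-b)(a+q)+(b+q)(a-b)-\text{(cross terms that telescope)}$ — more carefully $(a+q)^2-(b+q)^2 = (a+q)(a-b)+(a-b)(b+q)$ — one gets a quantity that is $\F$-affine in $q$. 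So $N(a+q)-N(b+q)$ is an affine (indeed $\F$-linear plus constant) function of $q$, and I need it to vanish. The key point is that its linear part, namely $q\mapsto$ the linear term, is not identically zero: one computes the linear-in-$q$ part of $N(a+q)-N(b+q)$ to be $-\bigl((a-b)q+q(a-b)\bigr)+\text{(a multiple of $(a-b)$ by a trace)}$, and because $a\neq b$ and $\calQ$ is a division algebra, the map $q\mapsto (a-b)q+q(a-b) = (a-b)q+q(a-b)$ (whose trace-zero-adjusted version relates to the reduced characteristic polynomial of $a-b$) is not identically zero on $\calQ$ — indeed evaluating at $q=1$ gives $2(a-b)$, which is nonzero unless $\car\F=2$, and one handles $\car\F=2$ by evaluating at a $q$ not commuting with $a-b$. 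Hence the affine function has a zero, and any such zero $q$ works.

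The main obstacle I anticipate is the bookkeeping to show that the relevant affine function $q\mapsto N(a+q)-N(b+q)$ genuinely has a root in $\calQ$ while \emph{simultaneously} keeping $a+q$ and $b+q$ outside $\F$. The solution set of an affine equation $\varphi(q)=0$ with $\varphi$ a nonzero $\F$-affine map is an affine subspace of $\calQ$ of $\F$-codimension~$1$ (so dimension $\geq 3$), whereas the sets $\{q: a+q\in\F\}$ and $\{q: b+q\in\F\}$ are cosets of $\F$ (dimension~$1$, or empty). A dimension count — a $3$-dimensional affine subspace of a $4$-dimensional space cannot be covered by two $1$-dimensional cosets, unless it is contained in one of them, which would force the affine solution space to have dimension $\leq 1$, a contradiction — shows we can pick $q$ in the solution set avoiding both cosets. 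I would also double-check the degenerate possibility that $\varphi\equiv 0$: if $N(a+q)=N(b+q)$ for all $q$, then in particular (taking $q=0$) $N(a)=N(b)$, and combined with $t(a)=t(b)$ this already gives what we want whenever $a,b\notin\F$, while if one of $a,b$ lies in $\F$ the condition $\varphi\equiv0$ is easily seen to be incompatible with $a\neq b$; so in every case a valid $q$ exists.
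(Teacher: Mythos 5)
Your overall strategy is the same as the paper's: observe that $t(a+q)=t(b+q)$ is automatic, reduce to arranging $N(a+q)=N(b+q)$, note that $q\mapsto N(a+q)-N(b+q)$ is $\F$-affine with nonzero linear part so its zero set is an affine $\F$-hyperplane of $\calQ$, and then avoid the two codimension-$3$ cosets $-a+\F$ and $-b+\F$ by a dimension count before invoking Lemma~\ref{lemma:conjugate}. The paper gets the nonvanishing of the linear part for free from the stated nondegeneracy of the quaternionic norm: the linear-in-$q$ part of $N(a+q)-N(b+q)$ is exactly the polar form $\langle a-b,q\rangle=t((a-b)\overline{q})$, which as a linear form in $q$ is nonzero whenever $a\neq b$.

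There is, however, a concrete error in your justification of that nonvanishing. You claim that evaluating the linear part at $q=1$ gives $2(a-b)$. It does not: the linear part is $\langle a-b,q\rangle = (a-b)\overline{q}+q\overline{a-b}$, and at $q=1$ this equals $(a-b)+\overline{a-b}=t(a-b)=t(a)-t(b)=0$. (In your own expansion you dropped the ``multiple of $(a-b)$ by a trace'' summand, which at $q=1$ is $(a-b)t(1)=2(a-b)$ and cancels the $-2(a-b)$ you kept.) So $q=1$ never witnesses nonvanishing, in any characteristic. Your fallback discussion of the case $\varphi\equiv 0$ does not close this gap either: the problematic situation is when the linear part vanishes identically but $N(a)\neq N(b)$, in which case $\varphi$ has no zeros and the argument collapses. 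The clean fix, and the one the paper uses, is to invoke the nondegeneracy of the polar form of $N$ directly; alternatively, evaluating at $q=a-b$ gives $\langle a-b,a-b\rangle=2N(a-b)\neq 0$ when $\car\F\neq 2$, but you still need a separate argument for characteristic $2$, which the nondegeneracy statement handles uniformly.
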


\begin{proof}
If $a=b$ the result is trivial. Assume now that $a \neq b$.

Let $q \in \calQ$.
We note that $t(a+q)=t(a)+t(q)=t(b)+t(q)=t(b+q)$. Denote by $\langle -,-\rangle$ the polar form of the norm (i.e., the quaternionic inner product).
Then $N(b+q)-N(a+q)=N(b)-N(a)+\langle b-a,q\rangle$.
Since $b-a \neq 0$ and $\langle -,-\rangle$ is nondegenerate, the solution set of the equation
$N(b)-N(a)+\langle b-a,q\rangle=0$ is an affine hyperplane $\calH$ of the $\F$-vector space $\calQ$.
The $1$-dimensional $\F$-affine subspaces $-a+\F$ and $-b+\F$ cannot cover $\calH$ (since it has dimension $3$), and we deduce that we can choose
$q \in \calH$ such that $a+q \not\in \F$ and $b+q \not\in \F$. Then $a+q \simeq b+q$ by Lemma \ref{lemma:conjugate}.
\end{proof}

\begin{Rem}\label{rem:commutator}
Take a pure quaternion $a \in \calQ^p$. Applying Lemma \ref{lemma:translateconjugate} to $b=0$ yields $q \in \calQ$ and
$u \in \calQ^\times$ such that $a+q=u q u^{-1}$, leading to $a=[uq,u^{-1}]$. Hence every pure quaternion is a Lie commutator.
\end{Rem}

Now we can prove Lemma \ref{lemma:completionn=2}.

\begin{proof}[Proof of Lemma \ref{lemma:completionn=2}]
Set $A:=\begin{bmatrix}
a & 0 \\
1 & b
\end{bmatrix}$ and $B(\delta):=\begin{bmatrix}
0 & \delta \\
0 & 0
\end{bmatrix}$.
By Lemma \ref{lemma:translateconjugate}, there exist $q \in \calQ$ and $g \in \calQ^\times$ such that $a+q=g (-b+q) g^{-1}$.
Set $D:=\Diag(1,g)$ and
$P:=\begin{bmatrix}
1 & q \\
0 & 1
\end{bmatrix}$, and note that
$$(D P) A (DP)^{-1}=\begin{bmatrix}
a+q & c \\
g & g(b-q)g^{-1}
\end{bmatrix}$$
for some $c \in \calQ$.
Note also that $(DP) B(\delta) (DP)^{-1}=B(\delta g^{-1})$.
Then, we set $s:=a+q$ and take $\delta \in \calQ$ such that $\delta g^{-1}+c=s^2$, so that
$$(D P) (A+B(\delta)) (DP)^{-1}=\begin{bmatrix}
s & s^2 \\
g & -s
\end{bmatrix}=\begin{bmatrix}
s & s^2 \\
-1 & -s
\end{bmatrix}+\begin{bmatrix}
0 & 0 \\
g+1 & 0
\end{bmatrix}.$$
Hence $A+B(\delta)$ is the sum of two square-zero matrices, as claimed.
\end{proof}

\subsection{A classical lemma on rank $1$ matrices}

\begin{lemma}\label{lemma:sumoftworank1}
Let $D$ be a division ring, and $a$ and $b$ be rank $1$ linear mappings from $U$ to $U'$, where $U$ and $U'$ are $D$-vector spaces.
If $\im(a) \neq \im(b)$ and $\Ker(a) \neq \Ker(b)$ then $\rk(a+b)>1$.
\end{lemma}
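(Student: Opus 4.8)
The plan is to argue by contradiction on the ranks of the four associated mappings. Suppose $\rk(a+b) \leq 1$. Since $a$ and $b$ have rank $1$, we know $\im(a)$ and $\im(b)$ are lines in $U'$, and $\Ker(a)$ and $\Ker(b)$ are hyperplanes in $U$; let me treat the case where $U$ is finite-dimensional (which is all we need), or more safely argue using the relevant subspaces only, since everything happens inside $\Ker(a)+\Ker(b)$ and $\im(a)+\im(b)$.

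First I would handle the degenerate case $a+b=0$: then $b=-a$, forcing $\im(b)=\im(a)$ and $\Ker(b)=\Ker(a)$, contradicting the hypotheses. So from now on $a+b$ has rank exactly $1$.

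Next, the key step: I would exploit $\Ker(a) \neq \Ker(b)$. Since these are two distinct hyperplanes (of the subspace $U$, or of the relevant finite-dimensional subspace $\Ker a + \Ker b$, which has dimension one more than each), we can pick a vector $x \in \Ker(a) \setminus \Ker(b)$ and a vector $y \in \Ker(b) \setminus \Ker(a)$. Then $(a+b)(x) = b(x) \neq 0$ lies in $\im(b)$, and $(a+b)(y) = a(y) \neq 0$ lies in $\im(a)$. Since $a+b$ has rank $1$, its image is a single line, so $b(x)$ and $a(y)$ are nonzero vectors on the same line; hence $\im(b)$ meets $\im(a)$ nontrivially, and as both are lines this forces $\im(a) = \im(b)$, contradicting the hypothesis $\im(a) \neq \im(b)$.

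The main (and only mild) obstacle is making sure the vectors $x$ and $y$ exist and that we stay within legitimate subspace arithmetic over a division ring: distinct hyperplanes of a vector space over a division ring still have a vector in one but not the other, since if $\Ker(a) \subseteq \Ker(b)$ and both are hyperplanes then they are equal (a proper subspace of a hyperplane has strictly smaller codimension count, and over a division ring the lattice of subspaces behaves well enough for this). So the argument is clean; no dimension-counting subtleties beyond the observation that a line containing two nonzero vectors is determined by either of them.
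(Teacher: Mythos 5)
Your proof is correct and uses the same essential idea as the paper: choose $x \in \Ker(a) \setminus \Ker(b)$ and $y \in \Ker(b) \setminus \Ker(a)$, and observe that $(a+b)(x)=b(x)$ and $(a+b)(y)=a(y)$ are nonzero vectors lying on the distinct lines $\im(b)$ and $\im(a)$. The paper simply concludes directly that these two vectors are linearly independent (so $\rk(a+b)\geq 2$), whereas you phrase it as a contradiction with a separate treatment of the $a+b=0$ case; this is a purely presentational difference.
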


\begin{proof}
Assume that $\im(a) \neq \im(b)$ and $\Ker(a) \neq \Ker(b)$.
Since $\Ker(a)$ and $\Ker(b)$ are distinct $D$-linear hyperplanes of $U$, we can find $x \in \Ker(a) \setminus \Ker(b)$ and $y \in \Ker(b) \setminus \Ker(a)$.
Then $(a+b)(x) \in \im(a) \setminus \{0\}$ and $(a+b)(y) \in \im(b) \setminus \{0\}$, so $(a+b)(x)$ and $(a+b)(y)$ are linearly independent vectors of
$\im(a+b)$.
\end{proof}

\subsection{A local linear dependence lemma}

The next lemma is much more difficult. It explores the obstructions for the local linear independence of the family $(\id_V,u,u^2)$.

\begin{lemma}\label{lemma:3LLD}
Let $u$ be an endomorphism of a $3$-dimensional vector space $V$ over $\calQ$.
Assume that $x,u(x),u^2(x)$ are linearly dependent for all $x \in V$.
Then $u$ is of special type I, II or III.
\end{lemma}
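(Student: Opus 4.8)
The plan is to exploit the hypothesis that $(\id_V, u, u^2)$ is \emph{locally linearly dependent} and translate it, via a dimension-counting argument, into a genuine algebraic relation satisfied by $u$. First I would dispose of the case where $u$ has an eigenvalue $\lambda$ lying in the center $\F$: then either $u-\id_V\lambda$ is zero (so $u$ is scalar, special type I, assuming $u\neq 0$; if $u=0$ it is special type II in a degenerate sense, or we note the statement is vacuously handled), or it is nonzero and we can try to reduce the dimension. So the real work is under the standing assumption that $u$ has no eigenvalue in $\F$. Next I would set up the case distinction on the \emph{ranks} of $u$ and of $u-q$ for various scalars. If $u$ itself is singular, pick $x_0\in\Ker u\setminus\{0\}$; then on $V/x_0\calQ$ the induced endomorphism $\overline u$ still satisfies the local dependence of $(\id,\overline u,\overline u^2)$, and in dimension $2$ local linear dependence of $(\id,\overline u,\overline u^2)$ forces $\overline u^2\in\Vect(\id,\overline u)$ (here I would invoke a small $2$-dimensional lemma: any endomorphism of a $2$-dimensional space satisfies such a relation, since we may triangularize it using Corollary \ref{cor:triangulareigenvalues}). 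Lifting back, $u$ would have a nearly block-triangular form, and since $u$ has no eigenvalue in $\F$ the diagonal entries are noncentral, which together with the dependence relation should pin $u$ down as special type II or III.

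The heart of the argument is the generic case where $u$ and all its translates $u-q\id_V$ ($q\in\calQ$) are automorphisms, equivalently $u$ has no eigenvalue at all — but by Corollary \ref{cor:triangulareigenvalues} every endomorphism of a finite-dimensional space over a centrally finite division ring \emph{does} have an eigenvalue, so this case is empty and we always have an eigenvalue $q_0\notin\F$. Conjugating, we may assume $u$ is represented by a matrix $M$ with $(1,1)$-entry $q_0$ and first column $(q_0,0,0)^{\mathsf T}$, i.e. $e_1$ is an eigenvector. The induced endomorphism $\overline u$ on $V/e_1\calQ$ is $2$-dimensional, hence satisfies $\overline u^2=\overline u\,\overline\lambda+\id\,\overline\mu$ for suitable central... — wait, the coefficients here need not be central; rather $\overline u$ satisfies its own "characteristic" relation only after we know it is diagonalisable or after triangularizing. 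So instead I would argue directly on $V$: using the local dependence at the generic vector $e_2$ (completing $e_1$ to a basis), the three vectors $e_2, u(e_2), u^2(e_2)$ are dependent, and likewise for $e_3$ and for $e_2+e_3$; combining these — and carefully using that $e_1,e_2,e_3$ already exhibit $u$ in a partially-reduced shape — yields that $u^2$ lies in the span of $\id_V, u$ \emph{over $\calQ$ acting on the left}, which is not quite a polynomial identity. The delicate point, which I expect to be \textbf{the main obstacle}, is exactly this: local linear dependence over a noncommutative ring does \emph{not} immediately give a single two-sided relation $u^2 = u\lambda + \id\mu$; one must show the "coefficients" can be taken constant and central.

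To overcome that obstacle I would use the following standard local-to-global mechanism. Consider the $\calQ$-subspace (for the \emph{left} module structure on $\End_\calQ(V)$, or better, work with the set $S=\{x\in V: x,u(x),u^2(x) \text{ span a space of dimension}\le 2\}=V$ by hypothesis) and split $V$ according to the minimal such dependence. For a vector $x$ with $x,u(x)$ independent but $u^2(x)=x\alpha_x+u(x)\beta_x$, the cyclic subspace $\Vect(x,u(x))$ is $u$-invariant, $2$-dimensional. If some such invariant plane $P$ exists, then $u$ restricted to $P$ and $u$ on $V/P$ are both defined; the induced map on the line... no, $V/P$ is $1$-dimensional, so $u$ acts there by right multiplication by an eigenvalue $\gamma\notin\F$, and then I extract a block-upper-triangular form and argue as in Section \ref{section:specialtypeIII} (via Lemma \ref{lemma:triangulareigenvalues} and Proposition \ref{prop:specialtypeII}) that the relevant supertrace constraints, combined with no eigenvalue in $\F$, force $u$ to be unispectral and diagonalisable, i.e. special type III — or, if a rank condition degenerates, special type II. The remaining possibility is that \emph{no} $u$-invariant plane arises this way, meaning that for every $x$ with $x,u(x)$ independent we actually have $u(x)\in\Vect(x,\ldots)$ already at the first step, i.e. $x,u(x)$ dependent for all $x$; by Lemma \ref{lemma:carachomotheties} that makes $u$ scalar, hence special type I. Assembling these cases — (a) $u$ scalar; (b) an invariant plane exists and the triangular analysis gives type II or III; (c) $u$ has an eigenvalue in $\F$ handled by a reduction — completes the proof. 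I would expect steps (b) to require the most care, essentially repeating the matrix bookkeeping of Section \ref{section:specialtypeIII} while tracking which diagonal blocks can or cannot be central.
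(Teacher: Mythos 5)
Your proposal correctly identifies the two easy anchors — that failure to be scalar (type I) produces a vector $x$ with $x,u(x)$ independent, so $P=\Vect(x,u(x))$ is a $2$-dimensional invariant plane, and that the heart of the difficulty is precisely the passage from local linear dependence to a usable global structure in the noncommutative setting. You flag that obstacle honestly, but you do not overcome it, and the places where you gesture at how you would do so would not work.

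Concretely: (1) Your opening reduction ``dispose of the case where $u$ has an eigenvalue in $\F$'' is not actually carried out; having a central eigenvalue does not make $u$ scalar, and the promised dimension reduction is not done. In the paper that situation is handled inside the main argument as the subcase $q\in\F$, and it is genuinely delicate — one must show the image of $u-\id_V\lambda$ has rank $1$, not just that $\lambda$ is an eigenvalue. (2) The crucial mechanism in the paper's proof is entirely absent from yours: after fixing $P$, one produces a vector $z\notin P$ that is \emph{not} an eigenvector of $u$ (this needs the separate Lemma \ref{lemma:carachomothetiesmodhyperplane}, a strengthening of Lemma \ref{lemma:carachomotheties} modulo a proper subspace), writes $u(z)=zq+x'$ with $x'\in P$ an eigenvector, and then deduces that for all $y\in P$ the vector $u(y)-yq+x'$ is zero or an eigenvector. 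The decisive step is a dimension count \emph{over $\F$}: this set is an $\F$-affine subspace of $P$ of dimension $8-\dim_\F\Ker(y\mapsto u(y)-yq)$, which first forces $q$ to be an eigenvalue of $u_P$, and later (when $q\notin\F$) rules out $u_P$ being non-diagonalisable because a $6$-dimensional $\F$-affine set cannot be covered by two $4$-dimensional $\F$-subspaces. Nothing in your sketch replaces this counting. (3) Your suggestion to ``argue as in Section \ref{section:specialtypeIII} via supertrace constraints and Proposition \ref{prop:specialtypeII}'' is a misdirection: that section and that proposition analyze when an endomorphism is a sum of two nilpotents, which is not the hypothesis here; there is no nilpotent decomposition to exploit, so the supertrace machinery has no foothold. (4) Finally, the actual construction in the type III case — finding $\delta$ with $x'=x\delta$, showing $\delta=[q,\gamma]$, and producing the corrected eigenvector $z'=z-x\gamma$ — is a nontrivial piece of bookkeeping that your proposal neither performs nor outlines a substitute for. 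In short, you have the right starting configuration but are missing both the auxiliary lemma that supplies the non-eigenvector $z$ outside $P$ and the $\F$-linear dimension-counting arguments that turn local dependence into the conclusion.
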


Before we prove this lemma, we start with a variation of Lemma \ref{lemma:carachomotheties}:

\begin{lemma}\label{lemma:carachomothetiesmodhyperplane}
Let $D$ be a division ring with more than $2$ elements, $V$ be a $D$-vector space with dimension greater than $1$, and
$W$ be a proper linear subspace of $V$.
Let $u \in \End_D(V)$ be such that $u(x)$ and $x$ are linearly dependent for all $x \in V \setminus W$.
Then $u$ is of special type I.
\end{lemma}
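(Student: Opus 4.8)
The plan is to reduce the statement to Lemma \ref{lemma:carachomotheties} by exploiting the extra room provided by the hypothesis that $x$ and $u(x)$ are linearly dependent on the complement of a proper subspace. First I would observe that the conclusion we want, $u = \id_V\lambda$ with $\lambda \in C$, follows from Lemma \ref{lemma:carachomotheties} as soon as we know that \emph{every} nonzero vector of $V$ is an eigenvector of $u$; so the whole problem is to propagate the eigenvector property from $V \setminus W$ to all of $V$, i.e.\ to the nonzero vectors of $W$. For each $x \in V\setminus\{0\}$ outside $W$, write $u(x)=x\,q_x$ for the unique scalar $q_x \in D$, exactly as in the proof of Lemma \ref{lemma:carachomotheties}.

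The key step is a linearity argument on $V \setminus W$. Take $x \in V\setminus W$ and $y \in V\setminus W$ with $x+y \in V\setminus W$ as well (this is where $\dim V>1$ and $\card D>2$ are needed: given a fixed $x\notin W$, all but at most one scalar $g$ gives $xg \notin W$, and one can similarly dodge $W$ when adding vectors). Then additivity of $u$ forces $xq_x + yq_y = (x+y)q_{x+y}$; if $x,y$ are linearly independent this yields $q_x = q_{x+y} = q_y$, and if they are dependent one routes through a third vector outside $W$, as in Lemma \ref{lemma:carachomotheties}. This shows $x \mapsto q_x$ is constant on $V\setminus W$, with value some $\lambda \in D$; and the computation $q_{xg}=g^{-1}q_x g$ for $g\in D^\times$ (applied to any $x$ with both $x$ and $xg$ outside $W$) forces $\lambda$ to be central. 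So $u$ and $\id_V\lambda$ agree on $V\setminus W$.

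It remains to show they agree on $W$ too. Since $u - \id_V\lambda$ vanishes on $V\setminus W$ and $V\setminus W$ spans $V$ (as $W$ is proper and $\card D > 2$, one can pick a spanning set of vectors avoiding $W$), the linear map $u-\id_V\lambda$ is identically zero. Hence $u = \id_V\lambda$ with $\lambda$ central, which is precisely an endomorphism of special type I (after noting the degenerate case $\lambda = 0$: if $u=0$ then $u = \id_V\cdot 0$ is, by the convention in the paper, the only place where "special type I" would nominally fail, but in fact $u=0$ trivially satisfies $u(x)=x\cdot 0$ so it is covered — here one should double-check the paper's precise convention, but in the intended application $u$ will be nonzero anyway).

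I expect the only real subtlety to be the combinatorial bookkeeping of "avoiding $W$": one must be careful that for independent $x,y$ the set of scalars $g$ with $xg+y \notin W$, or $x + yg \notin W$, is nonempty, and that one can always find a third vector outside $W$ not collinear with a given one. All of this is immediate once $\card D > 2$ and $\dim V \geq 2$, since a proper subspace of a $D$-vector space over an infinite (or merely $>2$-element) division ring cannot contain an affine line, and cannot equal the union of two proper subspaces. These are exactly the hypotheses that have been built into the statement, so the argument goes through cleanly; the proof is genuinely just a localized rerun of the proof of Lemma \ref{lemma:carachomotheties}.
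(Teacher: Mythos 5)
Your plan — rerun the additivity argument from Lemma~\ref{lemma:carachomotheties} on $V\setminus W$, then read off centrality from $q_{xg}=g^{-1}q_xg$ — is a genuinely different route from the paper's, but as written it has a real gap in the constancy step. You only treat pairs $x,y\in V\setminus W$ with $x+y\notin W$, and you only offer a ``route through a third vector'' for the case of \emph{dependent} $x,y$ (mirroring Lemma~\ref{lemma:carachomotheties}). The troublesome configuration is $x,y$ linearly \emph{independent} with $x+y\in W$, which you never address. For such a pair, replacing $y$ by $yg$ ($g\neq 0,1$) does give $x+yg\notin W$, but the additivity identity then yields $q_x=q_{yg}=g^{-1}q_yg$: you obtain that $q_x$ and $q_y$ are \emph{conjugate}, not equal, and the centrality argument you invoke in the next sentence needs exact constancy as input. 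So the chain ``constant $\Rightarrow$ central'' is circular here: constancy on lines is exactly what centrality would buy you, but you have not yet proved either. Also, the ``immediate bookkeeping'' claim underestimates the routing problem: to route the pair $(x,y)$ through some $z$ you must avoid not two but several sets at once ($W$, $-x+W$, $-y+W$, and the lines $xD$, $yD$), and that genuinely requires either $|D|$ large enough, a higher-dimensional $V$, or a split into the commutative/noncommutative dichotomy (noncommutative division rings being infinite by Wedderburn, while the finite commutative case is easy because conjugation is trivial). None of this is present in the write-up.

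The paper's proof avoids these difficulties by not trying to prove exact constancy from additivity at all. For a fixed pair $x,y$ of independent vectors outside $W$, it first proves $q_x\simeq q_y$ (only conjugacy) by a \emph{covering} argument: if $q_x$ and $q_y$ were not conjugate, the eigenvectors of $u$ inside $\Vect(x,y)$ would all lie in $xD\cup yD$, forcing $\Vect(x,y)$ to be covered by at most three $1$-dimensional subspaces, impossible when $|D|>2$. After rescaling so $q_x=q_y=q$, it then proves $q$ is central by a commutator trick: if $q'$ does not commute with $q$, then $xq'+y$ and $xq'q+y$ cannot be eigenvectors (this is checked directly from the eigenvector equation), hence both lie in $W$; subtracting gives $x\,q'(q-1)\in W$, forcing $x\in W$, a contradiction. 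Constancy of $q_x$ on $V\setminus W$ is then an immediate consequence of centrality, and the conclusion follows as in your endgame. Your final observation about the degenerate case $u=0$ (which strictly speaking is not ``of special type I'') is a fair point about the statement, but it does not affect the way the lemma is actually used.
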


\begin{proof}
Denote by $C$ the center of $D$.
Let $x,y$ in $V \setminus W$ be linearly independent, and consider the corresponding eigenvalues $q_x$ and $q_y$.
Assume first that $q_x$ and $q_y$ are not conjugates in $D$.
Then it is known (see Section \ref{section:reviewspectral}) that every eigenvector of $u$ in $\Vect(x,y)$ belongs to $x D \cup y D$.
It follows from our assumptions that $\Vect(x,y)=x D \cup y D \cup (W \cap \Vect(x,y))$,
thereby covering $\Vect(x,y)$ by two or three $1$-dimensional $D$-linear subspaces.
Since $|D|>2$, this is not possible.

Hence $q_x \simeq q_y$. By scaling $x$ and $y$, we can assume that $q_x=q_y=q$ for some $q \in D$.
Assume now that $q$ is not a central element of $D$.
Choose $q' \in D$ that does not commute with $q$. Then $x q'+y$ is not an eigenvector of $u$,
otherwise the corresponding eigenvalue would be $q$ (thanks to the coefficient on $y$ and the linear independence of $x$ and $y$)
and then $qq'=q'q$ by extracting the coefficient on $x$. Hence $x q'+y \in W$. Likewise $xq'q+y \in W$
(note that $q'q$ does not commute with $q$ because $q$ is invertible).
Hence $x(q'q-q') \in W$ by subtracting. Since $q'q-q'=q'(q-1)$ is invertible (because $q$ is not a central element),
we deduce that $x \in W$, which contradicts our starting assumptions. Therefore $q \in C$.

Now, we can conclude. Let $x \in V \setminus W$. Since $\dim_D V>1$ we can find $y \in V \setminus (W \cup x D)$,
and hence by the above the eigenvalue that is attached to $x$ is a central element $\lambda_x \in C$.
For all $y \in V \setminus W$, either $y$ is linearly dependent of $x$, then $\lambda_y=\lambda_x$
because $\lambda_x$ is central, or it is not and the previous step of the proof shows that $\lambda_x=\lambda_y$.

We conclude that there is a central element $\lambda \in C$ such that $u(x)=x\lambda$ for all $x \in V \setminus W$.
Hence, for $v : x \mapsto u(x)-x\lambda$, which is $C$-linear, we have $\Ker v \cup W=V$ where $\Ker v$ and $W$ are proper $\F$-linear subspaces of $V$,
which requires that $\Ker v=V$.
Hence $v=0$ and the conclusion follows.
\end{proof}

We are now in the position to prove Lemma \ref{lemma:3LLD}.

\begin{proof}[Proof of Lemma \ref{lemma:3LLD}]
We assume that $u$ is not of special type I and we will prove that it is of one of special types II or III.

First of all, since $u$ is not of special type I, we deduce from Lemma \ref{lemma:carachomotheties} that
there exists $x \in V \setminus \{0\}$ such that $y:=u(x)$ is not in $x\calQ$.
Then $P:=\Vect(x,y)$ is $2$-dimensional and is invariant under $u$, and obviously
$x$ is a nonzero vector in $P$ that is not an eigenvector of $u$.

Hence, we have found a $2$-dimensional $\calQ$-linear subspace $P$ of $V$ that is invariant under $u$ and in which
not all the nonzero vectors are eigenvectors of $u$. We fix such a space from now on, and forget about how it was constructed.

Next, we consider an arbitrary vector $z \in V \setminus P$. Then $u(z)-z\,q \in P$ for some $q \in \calQ$, and we set $x':=u(z)-z\,q$.
Because of our starting assumption $\Vect(z,u(z))$ is $u$-invariant. Since $P$ is also $u$-invariant, we find that $P \cap \Vect(z,u(z))$
is $u$-invariant, and we note that it has dimension at most $1$. It follows that either $x'$ equals $0$ or it is an eigenvector of $u$ in $P$.
To conclude $u(z)=z\,q+x'$ for some $q \in \calQ$ and some $x'\in P$ which is either $0$ or an eigenvector of $u$.
Of course $x' \neq 0$ if and only if $z$ is not an eigenvector of $u$.

Now, by Lemma \ref{lemma:carachomothetiesmodhyperplane} we can find a vector $z \in V \setminus P$ that is not an eigenvector of $u$,
and we fix it once and for all.

We write $u(z)=z\,q+x'$ for some $q \in \calQ$ and some $x'\in P$ which is an eigenvector of $u$.
Let $y \in P$ be arbitrary. Then $u(z+y)=(z+y)q'+y'$ for some $q' \in \calQ$ and some $y' \in P$ which is either zero or an eigenvector of $u$.
Yet $u(z+y) \equiv zq$ mod $P$, so $q'=q$ and we deduce that $u(y)=yq+y'-x'$.
Hence, for all $y \in P$ the vector $u(y)-yq+x'$ is either zero or an eigenvector of $u$ in $P$.
In the remainder of the proof, it will be crucial to observe that
$\{u(y)-yq+x' \mid y \in P\}$ is an $\F$-affine subspace of $P$ with dimension $8-\dim_\F \Ker(y \in P \mapsto u(y)-yq)$.
If $y \in P \mapsto u(y)-yq$ is injective, i.e., if $q$ is not an eigenvalue of $u_P$,
then we recover that $\{u(y)-yq+x' \mid y \in P\}=P$ and we deduce that every nonzero vector of $P$ is an eigenvector of $u$.
Yet this is known to be false. Therefore $q$ is an eigenvalue of $u_P$.

From there, we need to split the discussion between two subcases, whether
$q$ belongs to $\F$ or not.

\vskip 3mm
\noindent \textbf{Case 1.} $q \in \F$. \\
We write $\lambda:=q$ for greater clarity.
Here $u_P-\id_P \lambda$ is $\calQ$-linear and $\lambda$ is an eigenvalue of $u_P$.
Moreover $u_P \neq \id_P \lambda$, so $L:=\im(u_P-\id_P \lambda)$ is a $1$-dimensional $\calQ$-linear subspace of $P$.
As it is invariant under $u_P$, every nonzero vector of it is an eigenvector of $u_P$.

We shall prove that $x' \in \im(u_P-\id_P \lambda)$. To see this, note that if $\lambda$ is the sole eigenvalue of $u_P$,
then $\im(u_P-\id_P \lambda)=\Ker(u_P-\id_P \lambda)$, and $x' \in \Ker(u_P-\id_P \lambda)$ as we know from the start that $x'$
is an eigenvector of $u_P$.

Assume now that $\lambda$ is not the sole eigenvalue of $u_P$. Then $\im(u_P-\id_P \lambda)\oplus \Ker(u_P-\id_P \lambda)=P$
and it is not difficult to see that every eigenvector of $u_P$ belongs to $\im(u_P-\id_P \lambda)$ or $\Ker(u_P-\id_P \lambda)$:
indeed, $\lambda$ differs from any eigenvalue attached to an eigenvector in $\im(u_P-\id_P \lambda)$,
and hence is not conjugated to any such eigenvalue (because $\lambda$ is central).
Assume that $x' \not\in L$: then $x'+L$ contains no vector of $L$ and hence must be included in $\Ker(u_P-\id_P \lambda)$
(remember from the previous step that all the nonzero elements of $x'+L$ are eigenvectors of $u_P$); taking the translation vector spaces,
this leads to $L \subseteq \Ker(u_P-\id_P \lambda)$ and then to $L=\Ker(u_P-\id_P \lambda)$ by comparing dimensions, thereby contradicting
the assumption that $L \oplus \Ker(u_P-\id_P \lambda)=P$.

Hence in any case we have shown that $x' \in L$.
Remembering that $V=P \oplus z\calQ$ and that $u-\id_V \lambda$ is $\calQ$-linear, it follows that
$u(x)-x\lambda \in L$ for all $x \in V$, and we conclude that $u=\id_V \lambda+a$ for some rank $1$ endomorphism $a$ of $V$
(with range $L$). Hence $u$ is of special type II.

\vskip 3mm
\noindent \textbf{Case 2.} $q \not\in \F$. \\
Assume that $u_P$ is not diagonalisable and unispectral. Then the eigenvectors of $u_P$ for the eigenvalue $q$ all belong to a $1$-dimensional
$\calQ$-linear subspace $L$.
Moreover, either $u_P$ has only one invariant $1$-dimensional $\calQ$-linear subspace, or it has exactly two.
In any case, the set of all eigenvectors of $u_P$ is included in the union of two $4$-dimensional $\F$-linear subspaces of $P$,
whereas $\{u(y)-yq+x' \mid y \in P\}$ is an $\F$-affine subspace with dimension $6$.
The latter cannot be covered by two $4$-dimensional $\F$-affine subspaces, and hence we obtain a contradiction.

It follows that $u_P$ is diagonalisable and unispectral.
We can find a vector $x \in P \setminus \{0\}$ and some $\delta \in \calQ^\times$ such that $x'=x\delta$
and $u(x)=xq$, and then we extend $x$ into a basis $(x,y)$ of $P$ such that $u(y)=yq$.
Recall that $u(z)=zq+x'$.
Choose $\beta \in \calQ$ that does not commute with $q$.
Then the vector $u(y\beta)-(y\beta)q+x' $
is either zero or an eigenvector of $u_P$. Yet this vector is simply $x \delta +y [q,\beta]$, so it is nonzero
and hence it is an eigenvector of $u$, with eigenvalue denoted by $q'$. We deduce that $q\delta=\delta q'$ and $q [q,\beta]= [q,\beta] q'$.
Thus, both $[q,\beta]$ and $\delta$ conjugate $q'$ into $q$, to the effect that $\alpha:=[q,\beta] \delta^{-1} $ commutes with $q$, and hence
$\delta=\alpha^{-1}[q,\beta]=[q,\gamma]$ for $\gamma:=\alpha^{-1}\beta \in \calQ$.
It follows that $z':=z-x \gamma$ satisfies
$$u(z')=zq+x \delta-x q \gamma=zq-x \gamma q=z' q,$$
and we conclude that $(x,y,z')$ is a basis of $V$ that consists of eigenvectors of $u$ for the eigenvalue $q$.
Hence $u$ is of special type III.
\end{proof}

\section{Inductive proof of the main theorem}\label{section:induction}

We are now in the position to prove the difficult implication in Theorem \ref{theo:n>=3}, which is (i) $\Rightarrow$ (ii). The proof will be by induction on $n$.
Throughout, we fix an arbitrary quaternion division algebra $\calQ$ over a field $\F$.

A key idea is the following classical (and obvious) lemma:

\begin{lemma}\label{lemma:extension}
Let $R$ be a ring, and let $M \in \Mat_n(R)$ be of the form
$\begin{bmatrix}
0 & [?]_{1 \times (n-1)} \\
[?]_{(n-1) \times 1} & N
\end{bmatrix}$ where $N \in \Mat_{n-1}(R)$ is similar to matrix with diagonal zero. Then $M$ is similar to a matrix with diagonal zero.
\end{lemma}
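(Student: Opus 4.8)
The statement is classical, so I will just describe the short argument. The plan is to conjugate $M$ by a block-diagonal matrix that acts only on the lower-right corner. By hypothesis there exists $P \in \GL_{n-1}(R)$ such that $PNP^{-1}$ has all of its diagonal entries equal to $0$. I write
$$M = \begin{bmatrix} 0 & C \\ B & N \end{bmatrix},$$
where $C$ denotes the top row block (of size $1 \times (n-1)$) and $B$ the left column block (of size $(n-1) \times 1$), and I set
$$Q := \begin{bmatrix} 1 & [0]_{1 \times (n-1)} \\ [0]_{(n-1) \times 1} & P \end{bmatrix} \in \Mat_n(R),$$
which is invertible with inverse obtained by replacing $P$ with $P^{-1}$.

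Then a direct block computation gives
$$Q M Q^{-1} = \begin{bmatrix} 0 & C P^{-1} \\ P B & P N P^{-1} \end{bmatrix}.$$
The $(1,1)$-entry of $Q M Q^{-1}$ is still $0$, while its lower-right $(n-1)$-by-$(n-1)$ block is $P N P^{-1}$, whose diagonal entries all vanish by the choice of $P$. Hence every diagonal entry of $Q M Q^{-1}$ is zero, i.e.\ $M$ is similar to a matrix with diagonal zero, as desired. There is essentially no obstacle in this proof: the only point worth noting is that conjugating by a block-diagonal matrix of the displayed shape leaves the top-left scalar entry untouched and merely conjugates the bottom-right block, so it can spoil neither the $(1,1)$ zero nor the vanishing of the remaining diagonal entries.
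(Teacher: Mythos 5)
Your proof is correct and is exactly the routine block-diagonal conjugation the paper evidently has in mind when it calls this lemma "classical (and obvious)" and omits a proof. Nothing to add.
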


\subsection{The case $n=3$}\label{section:n=3}

Let $V$ be a $3$-dimensional vector space over $\calQ$, and let $u \in \End(V)$ be such that $\tr(u)=0$
but $u$ is of none of special types I to III.
We wish to prove that $u$ is represented by a matrix with diagonal zero.
By Lemma \ref{lemma:3LLD}, there is a vector $x \in V$ such that $(x,u(x),u^2(x))$ is a basis of $V$.
We write the matrix of $u$ in the basis $(x,u(x),u^2(x))$ as
$$M=\begin{bmatrix}
0 & 0 & c \\
1 & 0 & b \\
0 & 1 & a
\end{bmatrix} \quad \text{where $(a,b,c)\in \calQ^3$.}$$
Let $\delta \in \calQ$, and consider the basis $(x,u(x),u^2(x)+x \delta)$ instead:
we obtain that the new representing matrix for $u$ equals
$$M(\delta):=\begin{bmatrix}
0 & -\delta & c-a \delta \\
1 & 0 & b+\delta \\
0 & 1 & a
\end{bmatrix}.$$
Now, since $t(a)=\tr(M)=\tr(u)=0$, we gather from Lemma \ref{lemma:completionn=2} that $\delta$ can be chosen so that
$\begin{bmatrix}
0 & b+\delta \\
1 & a
\end{bmatrix}$ is the sum of two nilpotent matrices, and hence similar to a matrix with diagonal zero (by Theorem \ref{theo:n=2}).
By Lemma \ref{lemma:extension} we infer that $M(\delta)$ is similar to a matrix with diagonal zero,
and hence $u$ is represented by a matrix with diagonal zero.

\subsection{The case $n \geq 4$}

Now, we proceed by induction. Let $n \geq 4$, and assume that the implication (i) $\Rightarrow$ (ii) in Theorem \ref{theo:n>=3}
has been proved for all $(n-1)$-dimensional vector spaces over $\calQ$.
Let $V$ be an $n$-dimensional vector space over $\calQ$, and $u$ be an endomorphism of it with trace zero.
We will assume that $u$ is not represented by a matrix with diagonal zero and is not of special type I, and we shall prove that
$u$ must then be of special type II.

Since $u$ is not of special type I, we use Lemma \ref{lemma:carachomotheties} to retrieve
a vector $x \in V$ such that $x$ and $u(x)$ are linearly independent.
We extend $(x,u(x))$ into a basis $(e_1,\dots,e_n)$ of $V$ (with $e_1=x$ and $e_2=u(x)$).
We write the matrix of $u$ in that basis as
$$M=\begin{bmatrix}
0 & [?]_{1 \times (n-1)} \\
C & N
\end{bmatrix}$$
with $N \in \Mat_{n-1}(\calQ)$ and $C=\begin{bmatrix}
1 & 0 & 0 & \cdots & 0
\end{bmatrix}^T$.
Now, as in Section \ref{section:n=3}, the idea is to replace the basis with
$(e_1,e_2,e_3+e_1q_3,e_4+e_1q_4,\dots,e_n+e_1q_n)$ for an arbitrary list $(q_3,\dots,q_n)$ of elements of $\calQ$.
In the resulting representing matrix $M'$ of $u$, this affects the first row only in the last $n-1$ entries, and this modifies
the lower block $N$ by adding the matrix
$$L(q_3,\dots,q_n)=\begin{bmatrix}
0 & q_3 & \cdots & q_n \\
[0]_{(n-2) \times 1} & [0]_{(n-2) \times 1} & \cdots & [0]_{(n-2) \times 1}
\end{bmatrix}$$
to it.
It follows that, whatever the choice of the $q_i$'s, the resulting matrix
$N+L(q_3,\dots,q_n)$ is not similar to a matrix with diagonal zero. Yet in any case this matrix has trace zero,
so by induction it must be of one of special types I to III (i.e., it represents endomorphisms of such types).

At this point, it is necessary to split the discussion into two cases. We will start by considering the case where $N$
has special type III. The following lemma will be useful in both cases.

\begin{lemma}\label{lemma:compatlemma1}
Let $v$ be an endomorphism of a finite-dimensional vector space $W$ over $\calQ$, with special type I or II. Let $b$ be a rank $1$ endomorphism of $W$.
Then $v+b$ is of special type III only if it is also of special type I.
\end{lemma}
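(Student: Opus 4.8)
The key observation is that both special types I and II admit a \emph{uniform} description: an endomorphism of special type I or II of $W$ can be written as $v=\id_W \mu+a$ for some $\mu$ in the center $\F$ of $\calQ$ and some $a \in \End_\calQ(W)$ with $\rk a \leq 1$ (take $a=0$ and $\mu \neq 0$ in the case of special type I, and in the case of special type II take the scalar part for $\mu$ and $a=v-\id_W\mu$). Consequently $v+b=\id_W\mu+(a+b)$ with $\rk(a+b) \leq \rk a+\rk b \leq 2$. The whole argument then reduces to a rank count.

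First I would assume that $v+b$ is of special type III and derive that it is of special type I. By definition of special type III, $\dim W=3$ and $v+b$ is diagonalisable, unispectral and nonzero; hence (as recalled at the end of Section \ref{section:reviewspectral}) there is a basis of $W$ in which $v+b$ is represented by $\Diag(q,q,q)$ for some $q \in \calQ \setminus \{0\}$. In that same basis, since $\mu$ is central, the endomorphism $(v+b)-\id_W\mu=a+b$ is represented by $\Diag(q-\mu,q-\mu,q-\mu)$.

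Now I distinguish two cases according to whether $q=\mu$. If $q \neq \mu$, then $q-\mu$ is a nonzero element of the division ring $\calQ$, hence invertible, so $\Diag(q-\mu,q-\mu,q-\mu)$ is invertible and therefore $\rk(a+b)=3$, contradicting $\rk(a+b)\leq 2$. Hence $q=\mu$, so $q$ lies in the center $\F$ and $v+b=\id_W q$ is a scalar endomorphism; moreover it is nonzero because $q \neq 0$. Therefore $v+b$ is of special type I, as claimed.

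\textbf{Main difficulty.} There is essentially no analytic obstacle here: the heart of the matter is purely the dimension-$3$, rank-$\leq 2$ count above, so the only point requiring care is setting up the uniform ``central scalar plus rank $\leq 1$'' description of special types I and II, and noticing that subtracting a \emph{central} scalar from $\Diag(q,q,q)$ keeps it diagonal (so that its rank is visibly $0$ or $3$).
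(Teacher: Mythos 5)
Your proof is correct, and it takes a genuinely different route from the paper's. You use the uniform description $v=\id_W\mu+a$ with $\mu\in\F$ central and $\rk a\leq 1$ (valid for both special types I and II), then argue that $a+b=(v+b)-\id_W\mu$ has rank at most $2$, while a diagonalizable unispectral representation $\Diag(q,q,q)$ of $v+b$ would give $a+b$ the matrix $\Diag(q-\mu,q-\mu,q-\mu)$, which has rank $3$ unless $q=\mu$; the contradiction forces $q=\mu\in\F$, hence $v+b=\id_W q$ is of special type I. The paper instead observes that since $\dim W=3$, $\rk a\leq 1$ and $\rk b=1$, Grassmann's formula yields a nonzero $x\in\Ker a\cap\Ker b$; then $(v+b)(x)=x\lambda$, so the central scalar $\lambda$ is an eigenvalue of $v+b$, and unispectrality (together with centrality of $\lambda$) forces $v+b=\id_W\lambda$. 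Both arguments are short and elementary; yours trades the explicit common kernel vector for a clean rank count, which has the advantage of not needing to invoke any spectral reasoning about unispectrality beyond the matrix form $\Diag(q,q,q)$, while the paper's argument is a direct application of the same common-kernel trick it uses repeatedly elsewhere.
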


\begin{proof}
Assume that $v+b$ is of special type III, so that $W$ has dimension $3$.
Write $v=\id_V \lambda+a$ for some $a \in \End(V)$ of rank at most $1$, and some $\lambda \in \F$.
Since $\dim W=3$ there is a nonzero vector $x$ in $\Ker(a) \cap \Ker (b)$.
Then $(v+b)(x)=x\lambda$, so $\lambda$ is an eigenvalue of $v+b$. Since $v+b$ is of special type III, we conclude that $v+b=\id_W \lambda$ with $\lambda \neq 0$,
and hence $v+b$ is of special type I.
\end{proof}

\subsection{Case 1: $N$ is not of special type I or II}

Then $n=4$ and $N$ is of special type III.
Consider an arbitrary nonzero pair $(q_3,q_4) \in \calQ^2 \setminus \{(0,0)\}$ and note that $L(q_3,q_4)$ has rank at most $1$.
As seen earlier, $N+L(q_3,q_4)$ must be of special type I to III. If it were of special type I or II, then Lemma \ref{lemma:compatlemma1}
would yield that $N=(N+L(q_3,q_4))-L(q_3,q_4)$, which is of special type III, is actually of special type I, which would contradict our assumptions.
Hence $N+L(q_3,q_4)$ is of special type III.

We shall now prove that this leads to a contradiction. This is proved as a separate lemma, in which we consider the dual problem.

\begin{lemma}\label{lemma:translatetypeIII}
Let $v$ be an endomorphism of special type III, but not of special type I, of a vector space $W$ over $\calQ$, and let $P$ be a $2$-dimensional linear subspace of $W$.
Then there exists a nilpotent endomorphism $w$ of $W$ with kernel $P$ and such that $v+w$ is not of special type III.
\end{lemma}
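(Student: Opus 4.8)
The plan is to analyze an endomorphism $v$ of special type III on a $3$-dimensional space $W$ and produce a nilpotent $w$ with $\Ker w=P$ such that $v+w$ escapes special type III. Since $v$ is of special type III but not of special type I, $v$ is diagonalisable, unispectral, nonzero, and its common eigenvalue $q$ is not in $\F$ (otherwise $v$ would be scalar). I would first set up coordinates: pick a basis of eigenvectors of $v$ adapted to $P$, i.e.\ choose $(e_1,e_2)$ a basis of $P$ consisting of eigenvectors of $v$ (possible by Proposition \ref{prop:induceddiagonalisable} applied to the invariant subspace... but $P$ need not be invariant!). Here is the first subtlety: $P$ is an arbitrary $2$-dimensional subspace, not assumed invariant. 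So instead I would use that a nilpotent $w$ with kernel exactly $P$ is, up to scalar, determined by choosing a nonzero vector $f\notin P$ and a nonzero target vector $\xi\in P$ with $w(f)=\xi$ and $w|_P=0$ (this $w$ is square-zero, hence nilpotent, and $\Ker w=P$ precisely when $\xi\neq 0$). The freedom is the choice of $\xi\in P\setminus\{0\}$ (the line $f\calQ$ being pinned down by $W=P\oplus f\calQ$, any choice works; also $\xi$ may be scaled).

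Next I would compute the matrix of $v+w$ in a well-chosen basis. Write $v$ in a basis of eigenvectors $(x_1,x_2,x_3)$, each with eigenvalue $q$, so $v=\Diag(q,q,q)$. Choose the basis so that $f\calQ=x_3\calQ$ and $P=\Vect(x_1,x_2)$ — wait, again $P$ is given, not chosen. The honest approach: fix any basis $(f_1,f_2)$ of $P$ and complete with $f_3\notin P$; express $v$ in this basis as a (generally non-diagonal) matrix $M_v$ whose only eigenvalue-conjugacy-class is that of $q$, with $M_v$ diagonalisable. Then $v+w$ has matrix $M_v+E$ where $E$ has a single nonzero column (the third), with entries $\xi_1,\xi_2$ (the coordinates of $\xi$ in $(f_1,f_2)$) in the top two rows and $0$ in the bottom. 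The key claim is: for a suitable choice of $(\xi_1,\xi_2)\neq(0,0)$, the matrix $M_v+E$ is \emph{not} of special type III, i.e.\ it fails to be unispectral-diagonalisable-nonzero. The cleanest way to violate special type III is to arrange that $v+w$ has an eigenvalue lying in $\F$ while $v+w$ is nonzero and noncentral — then it cannot be of special type III (since special type III endomorphisms have no eigenvalue in $\F$, as noted after Proposition \ref{prop:specialtypeIII}), and we are done. Actually even simpler: arrange that $v+w$ has two non-conjugate eigenvalues, destroying unispectrality.

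To force an eigenvalue in $\F$: I would look for an eigenvector of $v+w$ of the form $y+f_3 t$ with $y\in P$ and eigenvalue some prescribed $\lambda\in\F$. The condition $(v+w)(y+f_3 t)=(y+f_3 t)\lambda$ decomposes, using $w|_P=0$ and $w(f_3)=\xi$, into $v(y)+\xi t + v(f_3)t = y\lambda + f_3 t\lambda$. Writing $v(f_3)=p+f_3 r$ with $p\in P$ and $r\in\calQ$ (the third column of $M_v$), and using that the $P$-component and $f_3$-component must separately match, I get $f_3$-component: $rt = t\lambda$ (assuming $v$ maps $f_3$ to $P$ plus $f_3\calQ$; the $f_3\calQ$ part of $v(f_3)$ is $f_3 r$), forcing $t^{-1}rt=\lambda$, i.e.\ $r\simeq\lambda$, which since $r$ lies in the conjugacy class of $q$ (by Lemma \ref{lemma:triangulareigenvalues}/the triangularization remark, $r$ is an eigenvalue of $v$) is impossible as $q\not\simeq$ any element of $\F$. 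So that exact-form attempt fails, meaning: with $t\neq 0$ there is no eigenvalue in $\F$; the only hope for an $\F$-eigenvalue is an eigenvector inside $P$ — but $v|_P$... again $P$ isn't invariant. I therefore expect the real argument to go the other way: show $v+w$ is \emph{not diagonalisable} for generic $\xi$. I would count: if $v+w$ were diagonalisable and unispectral with eigenvalue $q'$, then by Lemma \ref{lemma:3LLD}-type reasoning in dimension $3$ (or directly) the existence of a full eigenbasis severely constrains $\xi$; I would show the set of ``bad'' $\xi$ (those making $v+w$ special type III) is contained in a proper $\F$-subvariety of $P\setminus\{0\}\cong\calQ^2\setminus\{0\}$ (an $8$-dimensional $\F$-space), hence a suitable $\xi$ exists. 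Concretely, I would parametrize potential eigenbases of $v+w$ and derive polynomial (over $\F$) conditions on $\xi$, then do a dimension count as in the proof of Lemma \ref{lemma:3LLD}.

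The main obstacle, as flagged, is that $P$ is not assumed $v$-invariant, so I cannot directly diagonalise $v$ in a $P$-adapted basis, and the eigenvector bookkeeping for $v+w$ is genuinely two-dimensional in $\xi$. I would handle this by first reducing to the case where $P$ contains at least one eigenvector of $v$: if $P\cap(\text{eigenvectors of }v)\neq\emptyset$ I get partial diagonalisation; if not, then $P$ meets the ``eigencone'' of $v$ trivially, which (since the eigencone of a special-type-III $v$ is all of $W\setminus\{0\}$!) is impossible — so in fact \emph{every} nonzero vector of $W$ is an eigenvector of $v$, giving $P=\Vect(f_1,f_2)$ with both $f_i$ eigenvectors, and $v(f_3)=f_3 q$ too, so $M_v=\Diag(q,q,q)$ outright. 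That collapses the difficulty entirely: $w$ is $E$ with third column $(\xi_1,\xi_2,0)^T$, and $M_v+E=\begin{bmatrix}q&0&\xi_1\\0&q&\xi_2\\0&0&q\end{bmatrix}$ up to the chosen basis of $P$. Then I must choose $\xi=(\xi_1,\xi_2)$, not both zero, so that this matrix is not of special type III. By Lemma \ref{lemma:diago2by2}-style analysis, this upper-triangular matrix $\begin{bmatrix}q&v\\0&q\end{bmatrix}$ (in block form with $v=(\xi_1,\xi_2)^T$-ish) is diagonalisable iff each entry is of the form $[q,x]$; picking $\xi_1$ \emph{not} of the form $[q,x]$ — possible since $[q,-]$ is $\F$-linear on $\calQ$ with $3$-dimensional image $\calQ^p$ when $q\notin\F$, wait its image is inside $\calQ^p$ of dimension $3$ in the $4$-dimensional $\calQ$... hmm, actually $\mathrm{im}[q,-]$ may be all of some $2$-dim space — in any case one can choose $\xi$ so that the matrix is non-diagonalisable, hence not of special type III. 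I would verify this via Lemma \ref{lemma:diago2by2} and a dimension count on $\{[q,x]:x\in\calQ\}$, which is a proper $\F$-subspace of $\calQ$ since $q\notin\F$ means $\dim_\F\ker[q,-]\geq 2$ so $\dim_\F\mathrm{im}[q,-]\leq 2<4$. Choosing the ``$\xi_1$'' coordinate (the top-right entry after reduction to a single super-diagonal block) outside this subspace makes $v+w$ non-diagonalisable, completing the proof.
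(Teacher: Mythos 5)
Your proof has a fatal gap at the pivotal step. You assert that ``the eigencone of a special-type-III $v$ is all of $W\setminus\{0\}$'', and from this you conclude that $v$ is represented by $\Diag(q,q,q)$ in \emph{any} basis adapted to $P$. This is false. An endomorphism of special type III is diagonalisable and unispectral with common eigenvalue $q\notin\F$; it admits \emph{some} basis of eigenvectors, but its eigenvectors do not exhaust $W\setminus\{0\}$. Concretely, if $(f_1,f_2,f_3)$ is a basis of eigenvectors for $q$, then $f_1+f_2\beta$ is an eigenvector of $v$ if and only if $\beta$ commutes with $q$, i.e.\ $\beta\in\F[q]$, a proper $\F$-subspace of $\calQ$. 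Worse: if every nonzero vector of $W$ were an eigenvector of $v$, then Lemma \ref{lemma:carachomotheties} would force $v$ to be scalar, hence of special type I, contradicting the hypothesis. So the reduction to $M_v=\Diag(q,q,q)$ in a $P$-adapted basis is unjustified, and with it the final computation via Lemma \ref{lemma:diago2by2} collapses, since you have not controlled the off-diagonal entries of the genuine matrix of $v$ in such a basis.

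What is actually true, and what the paper proves, is weaker but sufficient: $P$ contains \emph{at least one} eigenvector $e_1$ of $v$ for $q$ --- not that all of $P$ consists of eigenvectors. This is established by a dimension count over the commutative quadratic extension $\F[q]$: the set $f_1\F[q]+f_2\F[q]+f_3\F[q]$ is a $3$-dimensional $\F[q]$-subspace consisting entirely of $q$-eigenvectors, $P$ is $4$-dimensional over $\F[q]$, and $W$ is $6$-dimensional, so Grassmann's formula gives a nontrivial intersection. With $e_1\in P$ an eigenvector, one extends to a basis $(e_1,e_2,e_3)$ with $(e_1,e_2)$ a basis of $P$, and the matrix of $v$ has first column $(q,0,0)^T$ but is otherwise unconstrained. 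The paper then assumes for contradiction that $v+w$ is of special type III for every admissible $w$, uses one such $w$ to clear the $(1,3)$ and $(2,3)$ entries, observes by unispectrality that the resulting $(3,3)$-entry must be conjugate to $q$, rescales $e_3$ to make it equal $q$, and only then brings in Lemma \ref{lemma:diago2by2} on the invariant $2$-dimensional block $\Vect(e_1,e_3)$ (via Proposition \ref{prop:induceddiagonalisable}) to force the chosen $\delta$ into $\{[q,x]:x\in\calQ\}$, which one defeats by picking $\delta$ with $t(\delta)\neq 0$. Your final observation that $\{[q,x]:x\in\calQ\}$ is a proper $\F$-subspace of $\calQ$ is correct and is morally the same obstruction the paper exploits, but the several preceding reductions you skipped are precisely where the real work of this lemma lives.
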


\begin{proof}
We assume on the contrary that $v+w$ is of special type III whatever the nilpotent endomorphism $w \in \End(W)$ with kernel $P$, and we seek a contradiction.
First of all, we choose an eigenvalue $q$ of $v$. Note that $q \not\in \F$ because $v$ is not of special type I. We shall prove that $P$ contains an eigenvector of $v$ for the eigenvalue $q$.
To prove this, we choose a basis $(f_1,f_2,f_3)$ of $V$ in which all the vectors are eigenvectors for the eigenvalue $q$
(this is possible). Since $q \not\in \F$, $\F[q]$ is a quadratic extension of $\F$, the $\F[q]$-linear subspace
$G:=f_1\F[q]+f_2\F[q]+f_3\F[q]$ has dimension $3$ over $\F[q]$, whereas $W$ has dimension $6$ over $\F[q]$ and
$P$ has dimension $4$ over $\F[q]$. It follows by Grassmann's formula that $G \cap P \neq \{0\}$, which yields our first
point because every nonzero vector of $G$ is an eigenvector of $v$ for the eigenvalue $q$.

Now, we choose $e_1 \in P \setminus \{0\}$ such that $v(e_1)=e_1 q$, we extend it into a basis $(e_1,e_2)$ of the $\calQ$-vector space $P$, and finally into a basis
$(e_1,e_2,e_3)$ of the $\calQ$-vector space $V$. The matrix of $v$ in that basis now looks as
$$A=\begin{bmatrix}
q & ? & ? \\
0 & ? & ? \\
0 & ? & c
\end{bmatrix} \quad \text{where $c \in \calQ$.}$$
The nilpotent endomorphisms of $W$ with kernel $P$ are exactly the endomorphisms that are represented by a matrix of the form
$$\begin{bmatrix}
0 & 0 & \alpha \\
0 & 0 & \beta \\
0 & 0 & 0
\end{bmatrix} \quad \text{with $(\alpha,\beta)\in \calQ^2 \setminus \{(0,0)\}$.}$$
Then, by adding to $v$ an appropriate endomorphism of this kind (if necessary) we obtain an endomorphism of special type III that is represented by a matrix of the form
$$\begin{bmatrix}
q & ? & 0 \\
0 & ? & 0 \\
0 & ? & c
\end{bmatrix}.$$
Clearly $q$ and $c$ are eigenvalues of the latter, so they must be conjugates of one another.
Hence, by replacing $e_3$ with $e_3 \gamma$ for a well-chosen $\gamma \in \calQ^\times$ we can reduce the situation to the one where
$c=q$.

Next, choose $\delta \in \calQ$ such that $t(\delta) \neq 0$. The starting assumptions yield a matrix of the form
$$\begin{bmatrix}
q & ? & \delta \\
0 & ? & 0 \\
0 & ? & q
\end{bmatrix}$$
that represents an endomorphism of special type III in the basis $(e_1,e_2,e_3)$.
We note that $\Vect(e_1,e_3)$ is invariant under such an endomorphism, and we deduce from
Proposition \ref{prop:induceddiagonalisable} that
$\begin{bmatrix}
q & \delta \\
0 & q
\end{bmatrix}$ is diagonalisable. Then Lemma \ref{lemma:diago2by2} yields that $\delta=[q,d]$ for some $d \in \calQ$.
Yet this yields $t(\delta)=0$, contradicting our assumption.
This final contradiction completes the proof.
\end{proof}

To complete the proof in the present case, we consider the transconjugation $M \in \Mat_3(\calQ) \mapsto (\overline{m_{j,i}})_{1 \leq i,j \leq 3}$,
and we note that it takes diagonal matrices to diagonal matrices and preserves similarity
(this is because $(BA)^\star=A^\star B^\star$, and it is crucial to use the conjugation and not simply a transposition here).
In particular the transconjugation preserves the property of being of special type III.
Now, applying this we get that for all $(q_3,q_4) \in \calQ^2$ the matrix
$N^\star+\begin{bmatrix}
0 & 0 & 0 \\
\overline{q_3} & 0 & 0 \\
\overline{q_4} & 0 & 0
\end{bmatrix}$ is of special type III. This amounts to saying that for the endomorphism $v$
of $\calQ^3$ that is represented in the standard basis by $N^\star$, the endomorphism $v$ is of special type III, and $v+w$ is of special type III for every nilpotent endomorphism $w$ of $\calQ^3$ with kernel $\{0\} \times \calQ^2$.
Yet this runs in contradiction with Lemma \ref{lemma:translatetypeIII}. Hence the assumed case is impossible, and $N$ must actually be of special type I or II.

\subsection{Case 2: $N$ is of special type I or II}

Previously we have proved that $N$ is of special type I or II.
We proceed to prove that $u$ is of special type II.

To start with, $N=I_{n-1} \lambda+A$ for some $\lambda \in \F$ and some $A \in \Mat_{n-1}(\calQ)$ with rank at most $1$.
Consider an arbitrary nonzero list $(q_3,\dots,q_n) \in \calQ^{n-2}$ and note that $L(q_3,\dots,q_n)$ has rank at most $1$.
By Lemma \ref{lemma:compatlemma1}, $N+L(q_3,\dots,q_n)$ is of special type III only if it is of special type I, so in any case it is of special type I or II, yielding that
$I_{n-1}\lambda+A+L(q_3,\dots,q_n)=I_{n-1} \mu+B$ for some $B \in \Mat_{n-1}(\calQ)$ with rank at most $1$, and some $\mu \in \F$.
This yields
$$I_{n-1}(\mu-\lambda)=A-B+L(q_3,\dots,q_n).$$

If $\lambda \neq \mu$, then $A-B=I_{n-1}(\mu-\lambda)-L(q_3,\dots,q_n)$
is invertible (being upper-triangular with nonzero diagonal entries), yet $\rk(A-B) \leq \rk(A)+\rk(B) \leq 2$, and we contradict the fact that $n-1 \geq 3$.

Therefore $\lambda=\mu$. In turn this shows that $A+L(q_3,\dots,q_n)$ has rank at most $1$.
Now, we claim that $\im A \subseteq \calQ \times \{0_{n-2}\}$.
Assume that this is false. Then $\rk A=1$.
Then from Lemma \ref{lemma:sumoftworank1} we see that for every nonzero list $(q_3,\dots,q_n) \in \calQ^{n-2}$, as $A+L(q_3,\dots,q_n)$ has rank at most $1$
and $\im A \neq \im(L(q_3,\dots,q_n))$, we find that $\Ker A=\Ker (L(q_3,\dots,q_n))$.
Since $n>3$ we obtain a contradiction by varying the list $(q_3,\dots,q_n)$.

We conclude that $\im A \subseteq \calQ \times \{0_{n-2}\}$, i.e., $N-\lambda I_{n-1}$ has all its rows zero starting from the second one, and
hence $M-\lambda I_n$ has all its rows zero starting from the third one.

Focusing only on the first two columns, we generalize part of these findings as follows:

\begin{claim}\label{lastclaim}
For every vector $y \in V \setminus \{0\}$ that is not an eigenvector of $u$, one has
$u^2(y)\in \Vect(y,u(y))$.
\end{claim}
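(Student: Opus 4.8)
The plan is to prove the claim by \emph{replaying}, with an arbitrary non-eigenvector $y$ in the role that $x=e_1$ has played so far in this section, the entire argument developed above: the exclusion of Case~1 together with the computations of Case~2. The reason this works is that nothing in that argument used any property of $x$ beyond the linear independence of $x$ and $u(x)$; its only other ingredients — the hypotheses $\tr(u)=0$, ``$u$ is not of special type I'', ``$u$ is not representable by a matrix with diagonal zero'', and the induction hypothesis in dimension $n-1$ — are all basis-independent statements about $u$.

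Concretely, given $y\in V\setminus\{0\}$ that is not an eigenvector of $u$, the vectors $y$ and $u(y)$ are linearly independent, so I would extend $(y,u(y))$ to a basis $(f_1,\dots,f_n)$ of $V$ with $f_1=y$ and $f_2=u(y)$, and write the matrix of $u$ in it as $M_y=\begin{bmatrix}0 & [?]_{1\times(n-1)}\\ C & N_y\end{bmatrix}$ with $C=[\,1\ 0\ \cdots\ 0\,]^T$ and $N_y\in\Mat_{n-1}(\calQ)$. Then $\tr(N_y)=\tr(u)=0$ (the top-left entry of $M_y$ being $0$); $N_y$ is not similar to a matrix with diagonal zero, since otherwise Lemma~\ref{lemma:extension} would make $M_y$, hence $u$, similar to such a matrix; and neither $u$ nor $M_y$ is of special type I. Since replacing $f_i$ by $f_i+f_1q_i$ (for $3\le i\le n$) modifies $N_y$ by adding exactly the matrix $L(q_3,\dots,q_n)$, the configuration $(M_y,N_y,L)$ is literally the one analysed in Cases~1 and~2. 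Re-running those arguments verbatim: ``Case~1'' (that $N_y$ be of special type III but not of special type I) is excluded via the transconjugation step and Lemma~\ref{lemma:translatetypeIII}, so $N_y$ is of special type I or II, and then ``Case~2'' delivers $N_y=I_{n-1}\lambda_y+A_y$ with $\lambda_y\in\F$ and $\im A_y\subseteq\calQ\times\{0_{n-2}\}$.

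The last step is the translation, which is the only genuinely new computation: the inclusion $\im A_y\subseteq\calQ\times\{0_{n-2}\}$ means that every row of $A_y$ but the first is zero, so the first column of $N_y=I_{n-1}\lambda_y+A_y$ equals $[\,\lambda_y+(A_y)_{1,1}\ \ 0\ \cdots\ 0\,]^T$, hence has all its entries below the first one equal to zero; and that first column is precisely the tuple of coordinates of $u(f_2)=u^2(y)$ along $f_2,f_3,\dots,f_n$, so $u^2(y)\in\Vect(f_1,f_2)=\Vect(y,u(y))$, as claimed. I expect the only real obstacle to be bookkeeping: one must check that every step of Cases~1 and~2 really does survive the substitution of $M_y$ for $M$ — that the dimension bounds used there ($n-1\ge 3$, which still holds as $n\ge 4$) remain in force, that the transconjugation argument is insensitive to the choice of basis, and that $L(q_3,\dots,q_n)$ keeps the shape (rank at most $1$, supported in the first row away from the first column) on which Lemmas~\ref{lemma:compatlemma1} and~\ref{lemma:sumoftworank1} rely.
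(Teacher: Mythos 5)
Your proposal is correct and matches the paper's intent exactly: the paper itself gives no separate proof of this claim, only the phrase ``we generalize part of these findings,'' which is precisely the observation that Cases~1 and~2 used nothing about $x=e_1$ beyond the linear independence of $x$ and $u(x)$, together with basis-independent hypotheses on $u$. Your reconstruction of the implicit argument, including the careful extraction of $u^2(y)\in\Vect(y,u(y))$ from the shape of $N_y$'s first column, is accurate.
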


We will conclude that this leads to $u$ being of special type II.
To see this, we consider the vector space endomorphism $v:=u-\id_V\lambda$. Let $k \in \lcro 3,n\rcro$.
We note that the $\calQ$-linear subspace $\Vect(e_1,e_2,e_k)$ is invariant under $v$, and the matrix of the induced endomorphism $v_k$ takes the form
$$\begin{bmatrix}
-\lambda & m_{1,2} & m_{1,k} \\
1 & m_{2,2} & m_{2,k} \\
0 & 0 & 0
\end{bmatrix},$$
where $m_{1,j}$ and $m_{2,j}$ denote the first two entries in the $j$-th column of $M$.

By Claim \ref{lastclaim}, we can apply Lemma \ref{lemma:3LLD} to $v_k$
and recover that it is of special type I, II or III. Yet it is obvious from the first column that $v_k$ is not of special type I, and it is also not of special type III
since it is not an isomorphism (judging from the third row of the above matrix). Hence $v_k$ is of special type II.
We infer that $v_k=\id \mu+a_k$ for some $\mu \in \F$ and some rank $1$ endomorphism $a_k$ of $\Vect(e_1,e_2,e_k)$.
As seen from the first column of the above matrix we successively find that $a_k(e_1) \neq 0$ and $\im a_k \subset \Vect(e_1,e_2)$.
Hence the third diagonal entry of the above matrix equals $\mu$, to the effect that $\mu=0$, whence $v_k$ has rank $1$.
In particular, the second and third columns of the above matrix are multiples of the first one on the right with elements of $\calQ$.
Varying $k$, we conclude that all the columns of the matrix of $v$ in $(e_1,\dots,e_n)$ are multiples of the first one on the right with elements of $\calQ$.
Hence $\rk(v)=1$. As $u=\id_V \lambda+v$, we conclude that $u$ is of special type II, which completes the inductive step.

Therefore, implication (i) $\Rightarrow$ (ii) in Theorem \ref{theo:n>=3} is now proved. The remaining implications have already been explained, and our study is complete.

\end{document}